\newcommand*{\lmss}{\fontfamily{lmss}\selectfont}
 \newcommand{\ds}{\displaystyle}
 \newcommand{\modify}[1]{{#1}}
 \newcommand{\T}{\mathsf{T}}
 \newcommand{\E}{\mathbb{E}}
 \newcommand{\D}{\mathcal{D}}
 \newcommand{\U}{\mathcal{U}}
 \newcommand{\I}{\mathcal{I}}
 \newcommand{\Id}{I} 
 \newcommand{\cpsolved}[1]{}
 \newcommand{\jhsolved}[1]{}
 \newcommand{\Tmavep}{T_{\rm MAveP}}
 \newcommand{\Tmap}{T_{\rm MAP}}
 \newcommand{\Tmarp}{T_{\rm MARP}}
 \newcommand{\bftab}{\fontseries{b}\selectfont}
 \newcommand{\Tpdmc}{T_{\rm PDMC}^{\lambda}}
 \newcommand{\Tfb}{T_{\rm FB}^{\lambda}}
 \newcommand{\Tps}{ T_{\rm PS}^{\lambda}}
\newcommand{\Tprox}{T_{\rm prox}}
 \def\duke{{\lmss duke breast-cancer}\xspace}
 \def\leu{{\lmss leukemia}\xspace}
 \def\colon{{\lmss colon-cancer}\xspace}
 \def \rla{\rangle}
 \def \lla{\langle}
 \renewcommand{\Re}{{\rm I}\! {\rm R}}
 \DeclareMathOperator*{\argmin}{arg\,min}
 \DeclareMathOperator*{\rank}{rank}
 \DeclareMathOperator*{\Ker}{Ker}
 \DeclareMathOperator*{\Ran}{Ran}
 \DeclareMathOperator*{\Fix}{Fix}
 \DeclareMathOperator*{\cl}{cl}
 \DeclareMathOperator*{\dist}{dist}
 \DeclarePairedDelimiter\abs{\lvert}{\rvert}
 \DeclareMathOperator*{\dom}{dom}
 \newcommand{\norm}[1]{{\left\|{#1}\right\|}}
 \newcommand{\prox}{{\rm prox}}
\crefname{assumption}{Assumption}{Assumptions}
 \numberwithin{equation}{section}
 \newtheorem{assumption}{Assumption}
\newtheorem{theorem}{Theorem}[section]
\newtheorem{definition}[theorem]{Definition}
\newtheorem{corollary}[theorem]{Corollary}
\newtheorem{proposition}[theorem]{Proposition}
\newtheorem{lemma}[theorem]{Lemma}
\newtheorem{assumption}[theorem]{Assumption}
\theoremstyle{definition} 	
\newtheorem{example}[theorem]{Example}
\newtheorem{remark}[theorem]{Remark}
\def\TheTitle{Global convergence and acceleration of projection methods for feasibility problems involving union convex sets}
\title{\TheTitle\thanks{Version of \today. }}
\titlerunning{Global convergence and acceleration of projection methods for feasibility problems}
\author{Jan Harold Alcantara \and Ching-pei Lee}
\institute{*Corresponding author: Jan Harold Alcantara \at
	 Center for Advanced Intelligence Project, RIKEN  \\	\email{\url{janharold.alcantara@riken.jp}}\\ \text{} \\ 
	Ching-pei Lee \at
	Department of Statistical Inference and Mathematics, Institute of Statistical Mathematics \\ \email{\url{chingpei@ism.ac.jp}}\\
	}
\date{}
 \title{\TheTitle}
\author{Jan Harold Alcantara\thanks{\url{janharold.alcantara@riken.jp}.  Center for Advanced Intelligence Project, RIKEN.}
\qquad Ching-pei Lee\thanks{\url{chingpei@ism.ac.jp}.
Department of Statistical Inference and Mathematics, Institute of Statistical Mathematics.}
}
\begin{document}

\maketitle 
 
 \begin{abstract}
We prove global convergence of classical projection algorithms for
feasibility problems involving union convex sets, which refer to sets
expressible as the union of a finite number of closed convex sets. We
present a unified strategy for analyzing global convergence by means
of studying fixed-point iterations of a set-valued operator that is
the union of a finite number of compact-valued upper semicontinuous
maps. Such a generalized framework permits the analysis of a class of proximal algorithms for minimizing
  the sum of a piecewise
   smooth function and the difference between pointwise minimum of
   finitely many weakly convex functions and a piecewise smooth convex
   function. When realized on two-set feasibility problems, this
   algorithm class recovers alternating projections and averaged
   projections as special cases, and thus we obtain global convergence
   criterion for these projection algorithms. Using these general
   results, we derive sufficient conditions to guarantee global
   convergence for several projection algorithms for solving the
   sparse affine feasibility problem and a feasibility reformulation
   of the linear complementarity problem. Notably, we obtain global
   convergence of both the alternating and the averaged projection
   methods to the solution set for linear complementarity problems
   involving $P$-matrices. By leveraging the structures of the classes of problems we consider, we also propose acceleration algorithms with guaranteed global convergence. 
   Numerical results further exemplify that the proposed acceleration
   schemes significantly improve upon their non-accelerated
   counterparts in efficiency.\\
   
   	\ifdefined\submit
   	\keywords{fixed point algorithm; proximal methods; 
   		alternating projections; averaged projections; linear complementarity 
   		problem; union convex set; nonconvex feasibility problems; nonconvex optimization; global convergence }
   	
   	\else 
   \noindent {\bf Keywords.}\ fixed point algorithm; proximal methods; 
   alternating projections; averaged projections; linear complementarity 
   problem; union convex set; nonconvex feasibility problems; nonconvex optimization; global convergence 
   \fi 
 \end{abstract}

\section{Introduction}

Given two closed sets $S_1$ and $S_2$ in a Euclidean space $\E$, the
two-set feasibility problem formulated below involves finding a point in the
intersection of $S_1$ and $S_2$:
\begin{equation}
\text{find}~w\in S_1\cap S_2\tag{FP}.
\label{eq:feasibilityproblem}
\end{equation}
Given $w^0\in \E$,
the method of alternating projections \cref{eq:Map}
\begin{equation}
w^{k+1} \in (P_{S_2} \circ P_{S_1} ) (w^k),
\label{eq:Map}
\tag{MAP}
\end{equation} 
and the method of averaged projections \cref{eq:Mavep}
\begin{equation}
w^{k+1} \in \left( \frac{P_{S_1} + P_{S_2}}{2} \right) (w^k)
\label{eq:Mavep}
\tag{MAveP}
\end{equation}
are two classical projection methods for solving \cref{eq:feasibilityproblem}.
 Here, $P_S:\E \rightrightarrows S$ denotes the projector onto a closed set $S$ given by 
\begin{equation}
P_S(w) \coloneqq \{ y \in S : \norm{y-w} \leq \norm{z-w} \text{ for all }z\in S \}, \quad \forall w\in \E ,
\end{equation} 
which may contain more than one point when $S$ is nonconvex.
While global convergence of \cref{eq:Map} and \cref{eq:Mavep} to a
point in $S_1\cap S_2$ is well-understood when the sets involved are
convex \citep{Auslender1969,Bregman1965}, the global convergence 
even just to a superset of the solution set of
\cref{eq:feasibilityproblem} of MAP and MAveP for nonconvex
feasibility problems largely remains unknown. To date, only local
convergence results are known for the general nonconvex setting
(see \cite{DruLew19,LLM09}. 

\medskip 
Meanwhile, a special nonconvex structure known as \textit{union
convexity} has recently been observed in some application problems.  A
set is said to be a \emph{union convex set} if it is expressible as a
finite union of closed convex sets \citep{DT19}. A prominent example
is the problem of finding a sparse solution to a linear system $Aw=b$
with $A\in \Re^{m\times n}$ and $b\in \Re^m$ under the constraint
$\|w\|_0 \leq s$ for some $s\geq 0$. This is known as the \emph{sparse
affine feasibility problem} (SAFP), which can be cast as a feasibility
problem \cref{eq:feasibilityproblem} with
\begin{equation}
	S_1 = \{ w\in\Re^n: Aw=b\},\quad \text{and} \quad S_2 = A_s \coloneqq \{ w\in \Re^n :
	\|w\|_0\leq s\}.
	\label{eq:SAFP}
\end{equation}
$A_s$ is known as the ``sparsity set'', which is a finite union of linear
subspaces \citep{DT19,HLN14}. More recently, \cite{ACT21} studied the
\emph{general absolute value equation} (GAVE) $Ax+B|x| =c$ with
$A,B\in \Re^{m\times n}$ and $c\in \Re^m$, which can be naturally
reformulated as \cref{eq:feasibilityproblem} with $S_1=
\{(x,y)\in \Re^n\times \Re^n\} : Ax+By = c \}$, and $S_2=
\{(x,y)\in \Re^n\times \Re^n\} : y=|x| \}$ is a finite union of half-spaces. In these works, \cref{eq:Map} was used to solve the feasibility formulation, but its global convergence for GAVE is not fully understood except for homogeneous cases, while a quite restrictive assumption is used for the SAFP. 

Following the approach in \cite{ACT21}, we may also reformulate the linear complementarity problem (LCP) as a union convex set feasibility problem.  Given $M\in \Re^{n\times n}$ and $b\in \Re^n$, the LCP requires finding a point $x\in \Re^n$ that satisfies
\begin{equation}\label{eq:LCP}
x\geq 0, \qquad Mx-b \geq 0, \qquad \text{and} \qquad \lla x, Mx-b\rla = 
0.
\end{equation}
This problem encompasses many applications such as bimatrix games and
equilibrium problems, and notably includes quadratic programming as a
special case \citep{CPS92}. Many algorithms have been proposed for
solving the system \cref{eq:LCP}; we refer interested readers to
\cite{CPS92,FP03} for a comprehensive survey of theory and algorithms.
Meanwhile, a different class of algorithms can be derived through a
simple reformulation of the LCP~\cref{eq:LCP} as a feasibility
problem. Indeed, through introducing an additional variable
$y\coloneqq Mx-b$ and letting $w\coloneqq(x,y)$, the 
LCP~\cref{eq:LCP} is equivalent to \cref{eq:feasibilityproblem}
with
\begin{equation}
\begin{aligned}
S_1 &= \{ w\in \Re^{2n} ~:~ Aw = b \} \quad \text{with}\quad  
A 
\coloneqq
[M, \quad -I_n]
\in \Re^{n\times 2n},\\
S_2 &= \{ w \in \Re^{2n} ~:w_j \geq 0,\; w_{n+j}\geq 0,\;
w_jw_{n+j}=0, \;\forall 
j\in \{1,2,\dotsc,n\}\} . 
\end{aligned}
\label{eq:S1_lcp}
\end{equation}

Despite the special union convex structure of the involved sets of
these feasibility problems, determining the conditions under which the
algorithms \cref{eq:Map} and \cref{eq:Mavep} are globally convergent
remains to be an open problem, except for very specific instances of SAFP and GAVE. 
\jhsolved{removed the previous sentence as it doesn't seem necessary		} 
This work aims to show global convergence of the
classical projection algorithms applied to feasibility problems
involving union convex sets.

\subsection{Our approach}\label{sec:approach}
To prove global convergence, we interpret the projection methods \cref{eq:Map} and \cref{eq:Mavep} as proximal algorithms for solving optimization problems. In particular, we consider the structured optimization problem 
\begin{equation}
\min _{w\in \E} \quad V(w)\coloneqq f(w) + g(w) - h(w), \tag{OP}
\label{eq:minconvex_optimization}
\end{equation}
where $V$ is level-bounded, $f$ is the pointwise minimum of a finite
number of functions with Lipschitz-continuous gradients, $g$ is the
pointwise minimum of (weakly, strongly) convex functions, and $h$ is a
continuous real-valued convex function expressible as the pointwise
maximum of continuously differentiable convex functions. We denote by
$I_f$, $I_g$ and $I_h$ the collections of functions that define the
pieces of $f$, $g$ and $h$, respectively. That is, $f=\min_{f_i\in
I_f} f_i$, $g=\min_{g_i\in I_g}g_i$, and $h=\max_{h_i\in I_h} h_i$. We
say that $f_i\in I_f$ is \emph{active} at a point $w\in \E$ if $f(w) =
f_i(w)$. Note that $f$ is not necessarily smooth, and $g$ is not necessarily a convex function. For this structured problem, we introduce the following proximal-type algorithm
\begin{equation}
w^{k+1} \in T_{\rm prox}^{\lambda} (w^k)\coloneqq 
\prox_{\lambda 
	g}\left( w^k - 
\lambda f'(w^k) + \lambda h'(w^k) \right),
\label{eq:prox_alg}
\end{equation}
where $\prox_{ \lambda g}:\E \rightrightarrows \E$ is the proximal
operator, and the mappings $f',h':\E \rightrightarrows \E$ respectively map a point to the set of all gradients of functions in $I_f$ and $I_h$ that are active at the given point. 

We will show that under proper choices of the functions $f$, $g$ and
$h$ satisfying the said piecewise structures, the algorithms
\cref{eq:Map} and \cref{eq:Mavep} can be realized from the proximal
algorithm \cref{eq:prox_alg}. Hence, by deriving general conditions
under which \cref{eq:prox_alg} is globally convergent, we obtain as a
corollary the global convergence of \cref{eq:Map} and \cref{eq:Mavep}
for union convex set feasibility problems. Through this reformulation,
we can greatly simplify the task of finding sufficient conditions
for guaranteeing global convergence on the parameters for our
motivational problems of the sparse affine feasibility problem,
general absolute value equations, and the linear complementarity
problem.

Our convergence analysis of the proximal algorithm \cref{eq:prox_alg} involves studying the global convergence of the more general fixed point iterations defined by 
\begin{equation}
w^{k+1} \in T(w^k),~k\geq 0 , \tag{FPI}
\label{eq:fixedpointalgorithm}
\end{equation}
where $T:\E \rightrightarrows \E$ is a set-valued operator that
generalizes the properties of $\Tprox^{\lambda}$ for our considered
setting. In particular, we consider what we call a
\emph{union upper semicontinuous operator} $T$, which is a set-valued
map that can be decomposed as a finite union of upper semicontinuous operators, referred to as the \emph{individual operators} of $T$ (see \cref{defn:union_usc}). To establish the global convergence of \cref{eq:fixedpointalgorithm}, we assume the existence of a Lyapunov function associated with the operator $T$, in the sense defined in \cref{defn:lyapunov}. In the case of the optimization problem \cref{eq:minconvex_optimization}, we show that the objective function itself is the associated Lyapunov function for $\Tprox^{\lambda}$. 

\subsection{Contributions}
\label{sec:contributions}

The main contributions of this work include global convergence results as follows. 

\begin{itemize}
	\item[(I)] \emph{Global convergence of fixed point iterations.} Under the assumption that a Lyapunov function for an upper semicontinuous operator $T$ exists, we show in \cref{thm:global_acce} that any accumulation point of the iterations \cref{eq:fixedpointalgorithm} is a fixed point of $T$, that is, it belongs to the set 
	\[\Fix (T) \coloneqq \{ w\in \E : w\in T(w) \}.\]
	We further note in \cref{ex:not_fixed} that without the existence
	of a Lyapunov function, this result may not hold in general.
	Moreover, we also prove in \cref{thm:global_fullsequence} that
	when the individual operators of $T$ are calm at an accumulation
	point and $T$ is single-valued there, global convergence of the
	\emph{full sequence} holds.
	%

	\item[(II)] \emph{Global convergence of the proximal algorithm and
		fixed point set characterization.} 	Using the general theory,
		we establish in \cref{thm:global_pdmc_full} the global
		convergence of the proximal algorithm \cref{eq:prox_alg} to
		fixed points of $T_{\rm prox}^{\lambda}$ for suitable stepsize
		$\lambda$ by showing that the objective function of
		\cref{eq:minconvex_optimization} is a Lyapunov function for
		$T_{\rm prox}^{\lambda}$. This is stronger
		than the global \emph{subsequential} results typically
		obtained in the literature. To relate the importance of fixed
		points to the optimization problem
		\cref{eq:minconvex_optimization}, we show in
		\cref{thm:local_is_fixed} that
		\[
			\text{local minima of \cref{eq:minconvex_optimization}}
			\subset
		\Fix(T_{\rm prox}^{\lambda}).\]
		 Meanwhile, \textit{criticality} is
		a notion more traditionally used for providing necessary
		optimality conditions for \cref{eq:minconvex_optimization},
		and we show in \cref{thm:fixed_is_critical} that under a simple regularity assumption,
		\[
		\text{local minima of \cref{eq:minconvex_optimization}}
		\subset
		\Fix(T_{\rm prox}^{\lambda}) \subset 
		\text{critical points of \eqref{eq:minconvex_optimization}}.
	\]
		 Our convergence guarantee is thus stronger than
		 the traditional subsequential convergence to critical points
		 only.
	
	The setting we consider for \cref{eq:minconvex_optimization}
	subsumes the ones studied in prior works such as
	\cite{DT19,WCP18}. Consequently, our framework significantly
	extends these existing works to a wider class of optimization
	problems. More importantly, we obtain results
	concerning global convergence of the full sequence, which are
	stronger than the local or global subsequential convergence in
	existing works.
	
	\item[(III)] \emph{Global convergence of classical projection
		algorithms for union convex set feasibility problems.} Under
		certain coercivity assumptions, a consequence of the above
		general framework is that \cref{eq:Mavep} and a relaxed
		version of \cref{eq:Map}, given by 
	\begin{equation}
	w^{k+1} \in P_{S_2} ((1-\lambda)w^k + 
	\lambda P_{S_1}(w^k))
	\label{eq:Map_relaxed}
	\end{equation}
	with $\lambda \in (0,1)$, are both globally convergent to fixed points of their defining operators, as shown in \cref{sec:feasibility}.
\end{itemize}

We use the above results to determine conditions on the matrices
involved in SAFP, LCP, and GAVE\cpsolved{I added a remark on GAVE at the end of the
	LCP subsection. But I personally feel like we can also remove all
other parts mentioning GAVE. Which do you prefer?}
\jhsolved{To me, it doesn't matter that much if it's included or not in the manuscript. Actually, the only reason why I included it in the introduction is to provide more motivations for considering union convex feasibility problems. But two applications, namely SAFP and LCP, will probably suffice.}
under which the algorithms \cref{eq:Map},
\cref{eq:Map_relaxed}, and \cref{eq:Mavep} are globally convergent. We
point out that despite the availability of the above powerful tools
for the general case, the analysis for these specific problems still
requires quite some rigor, especially for proving the global convergence of \cref{eq:Map} for LCP. In particular, the following are our contributions for these feasibility problems. 

\begin{itemize} 	
	\item[(IV)] \emph{New (and old) projection algorithms for the
		sparse affine feasibility problem with global convergence
	guarantees.}
	In \cref{thm:global_pdmc_fb_sparse,thm:global_ps_affine}, we
	establish global convergence for the projected gradient algorithm
	and the relaxed method of alternating projections
	\cref{eq:Map_relaxed} applied on the sparse affine feasibility
	problem.
	The conditions we impose on the affine constraint are
	significantly looser than the ones used in existing works such as
	\cite{BT11,HLN14}, yet we still obtain global convergence to
	candidate solutions of the feasibility problem.
	One can also easily derive the same results as direct consequences
	of our analysis under the assumptions used in these works.
	In addition, our general framework is also capable of developing
	new algorithms with ease, and we thus derive several new algorithms
	for sparse affine feasibility, with similar global convergence
	guarantees.

	\item[(V)] \emph{New projection algorithms for the linear
		complementarity problem and square absolute value equations
	with global convergence guarantees.} As for the LCP
	\cref{eq:LCP}, we show in \cref{thm:global_lcp_nondegenerate} that
	\cref{eq:Mavep} and the relaxed \cref{eq:Map} given in
	\eqref{eq:Map_relaxed} are globally convergent to fixed points
	when $M$ is a \emph{nondegenerate} matrix, i.e., a matrix with
	nonzero principal minors. Moreover, local $Q$-linear convergence holds for \cref{eq:Map_relaxed}.

	We further show that for matrices with
	\textit{strictly positive} principal minors, also known as
	\emph{$P$-matrices}, global convergence of \cref{eq:Mavep} and
	relaxed MAP \cref{eq:Map_relaxed} to the actual \textit{solution
	set} of the feasibility reformulation is guaranteed.
	More significantly, we prove in \cref{cor:lcp_global_Pmatrix} that
	global convergence also holds for the \textit{original} iterations
	given by \cref{eq:Map} (as opposed to \cref{eq:Map_relaxed}),
	which is a \textit{rare result} for nonconvex feasibility
	problems. Similar to the sparse affine feasibility problem, we
	also present several other globally convergent projection-based algorithms for solving the LCP based on its feasibility reformulation.
\modify{	For GAVE involving square matrices $A$ and $B$, the results for
	the LCP can be easily adapted by reformulating the former as a
	linear complementarity problem
	with
		$M = (A^{\top} + B^{\top})(A^{\top} - B^{\top})^{-1}$,
	as discussed in \cite[Remark 2.18]{ACT21}.}
	
\end{itemize}

This work also contributes in the algorithmic side to propose
acceleration schemes that greatly improve the efficiency of
fixed-point iterations and projection algorithms.

\begin{itemize} 	
	\item[(VI)] \emph{Two Acceleration Schemes.}
We present a general acceleration scheme for the fixed point
iterations \cref{eq:fixedpointalgorithm} using the Lyapunov function
with guaranteed global subsequential convergence proved in
\cref{thm:global_acce}.
Taking advantage of the piecewise structures
of $f$, $g$ and $h$ in \cref{eq:minconvex_optimization} (or of the
union convex sets $S_1$ and $S_2$), we further derive accelerated
proximal algorithms whose global subsequential convergence follows
from the general case in \cref{sec:acce_pdmc}. In
\cref{sec:numerical}, we demonstrate empirically that
our acceleration methods significantly improve the performance of
their non-accelerated versions. The proposed acclerated algorithms
also outperform existing methods in our experiments.
\end{itemize}

\subsection{Outline}
In \cref{sec:relatedworks}, we discuss works related to the different
problem settings described above, and highlight the major
differences with and improvements over the existing works of this
paper. Mathematical preliminaries are summarized in
\cref{sec:preliminaries}. General tools concerning global convergence
of the fixed point iterations \cref{eq:fixedpointalgorithm} with union
upper semicontinuous $T$ are derived in \cref{sec:general}. Global
convergence of the proximal algorithm \cref{eq:prox_alg} and
characterization of the fixed points of $\Tprox^{\lambda}$ are
established in \cref{sec:minconvexoptimization}. We illustrate in
\cref{sec:feasibility} how to derive the projection methods
\cref{eq:Map} and \cref{eq:Mavep} from these proximal algorithms, and
we also derive another algorithm that was considered in \cite{BPW13}.
Our accleration schemes for the proximal algorithms are proposed in
\cref{sec:acce_pdmc}.
In \cref{sec:affinefeasibility}, we present a unified analysis of six projection algorithms for SAFP and LCP. \cref{sec:numerical} presents numerical experiments, and concluding remarks are given in \cref{sec:conclusion}. 

\section{Related works and further contributions}\label{sec:relatedworks}

We now compare and contrast our contributions with existing results in the literature on related topics.

\paragraph{Fixed point problems.} For the fixed point algorithm
\cref{eq:fixedpointalgorithm}, similar classes of operators $T$ that
can be expressed as a union of a finite number of set-valued operators
were studied by \cite{DT19,Tam18}. In these works,
\textit{continuous} (single-valued) individual operators, namely
nonexpansive and paracontracting maps, were considered. On the other
hand, the setting we consider involves set-valued upper semicontinuous
individual operators, and thus subsumes that in these prior works. When each individual operator is nonexpansive,  \emph{local 
	convergence} 
of \cref{eq:fixedpointalgorithm} was already established in \cite{DT19}. Our contribution described in (I) shows that a missing ingredient to extend this into a \textit{global} result is the existence of a coercive Lyapunov function (see \cref{defn:lyapunov,thm:global_acce}).
With a coercive Lyapunov function, single-valuedness of the union
operator at an accumulation point and calmness of the individual
operators at the same point are sufficient for guaranteeing global full
convergence.

\paragraph{Optimization.}For structured optimization problems of the
form \cref{eq:minconvex_optimization}, a traditional setting
considered in previous works involves a function $f$ that has a
Lipschitz continuous gradient, a proper closed convex function $g$,
and a continuous real-valued convex function $h$ \citep{LT22,WCP18}.
This setting contains a class of regularized optimization problems
that are usually motivated from statistics and machine learning. In
these applications, $f$ is a data-dependent loss function and $g-h$
represents a difference-of-convex regularizer such as the smoothly
clipped absolute deviation, minimax concave penalty, transformed
$\ell_1$, or the logarithmic penalty. However, under these 
assumptions on $f$, $g$ and $h$, it is difficult to interpret the
projection methods \cref{eq:Map} and \cref{eq:Mavep} for union convex
set \cref{eq:feasibilityproblem} as proximal algorithms for solving a
certain \cref{eq:minconvex_optimization}, as one shall see in this work. 

When $|I_f|=|I_g|=|I_h|=1$ and $g$ is a convex function, the
algorithmic operator of \cref{eq:prox_alg} reduces to a single-valued
operator $\Tprox^{\lambda} = \prox_{ \lambda g} \circ (Id - \lambda
\nabla f + \lambda \nabla h)$, which corresponds to the algorithm
studied in \cite[Section 4.2]{WCP18}, and the authors established its
global convergence under the Kurdyka-{\L}ojasiewicz (KL) assumption with a quadratic regularization on the objective function $V$. On the other hand, when $|I_f|=1$, $|I_h|=0$ and all the functions in $I_g$ are convex, $\Tprox^{\lambda}$ simplifies to $\Tprox^{\lambda} = \prox_{ \lambda g} \circ (Id-\lambda \nabla f)$, which is the forward-backward algorithm considered in \cite{DT19}, where only \textit{local convergence} to fixed points of $\Tprox^{\lambda} $ has been established. Hence, this paper provides a unifying setting for the above works, and is the first attempt to understand the global convergence of the algorithm \cref{eq:prox_alg} when $f$, $g$, and $h$ are piecewise functions described in \cref{sec:approach}. 

\paragraph{Nonconvex feasibility problems.} Due to difficulties
that come with nonconvexity, the existing body of literature on projection algorithms 
for solving nonconvex feasibility problems mainly focuses on \textit{local 
	convergence}. For instance, the local convergence of MAP for finding the intersection of union convex sets was established in \citep{DT19}. Using the same framework, one can also obtain local convergence of 
MAveP.
Global convergence for these algorithms on nonconvex sets largely
remains unknown, and
our present work shows that coercivity assumptions are sufficient to attain global convergence for the special case of union convex sets. \cpsolved{But they considered intersection of $S_i$ with all $S_i$
	being a union convex set, while technically we only have one set
being a union convex set, right?}
\jhsolved{Our specific motivating problem has one convex set and one union convex set. But the global convergence in \cref{sec:feasibility} applies to the case of two union convex sets.}

\emph{Local linear convergence} of MAP and MAveP for general nonconvex feasibility problems was studied in~\citep{LLM09} using the notion of \emph{strong regularity} of points in the solution set $S_1\cap S_2$. In the present work, we also 
establish local linear convergence of MAP (see \cref{prop:linear_ps} and \cref{sec:feasibility}) but under a Lipschitz continuity 
assumption that is more easily verifiable and potentially weaker than strong regularity. For example, for the feasibility formulation of LCP, the proof of
\cref{prop:fdecreases_nondegenerate} 
shows that if $M$ is a nondegenerate matrix and  $w^*\in S_1\cap S_2$
is a nondegenerate point (in the sense of \cref{defn:nondegenerate}), then $S_1$ and $S_2$ have a
``linearly regular intersection at $w^*$'' 
as defined in \cite{LLM09} see also the proof of \citep[Theorem 
3.19]{ACT21}. Consequently, MAP is locally linearly convergent to $w^*$ by \citep[Theorem
5.16]{LLM09}. However, it should be pointed out that nondegeneracy of $w^*$ is essential to 
guarantee this result, but this is not verifiable \emph{a priori}. \cref{thm:global_Pmatrix}, on the 
other hand, asserts 
that a linear rate is achievable whether or not $w^*$ is
nondegenerate.

\paragraph{Sparse affine feasibility problem.} Convergence analyses of
existing methods for the sparse affine feasibility problem
usually 
require near-orthonormality conditions on the matrix $A$ in
\cref{eq:SAFP}, such as the 
\emph{restricted isometry property} (RIP) introduced in \cite{CT05}. A more 
general condition subsuming the RIP is the \emph{scalable restricted isometry property}
(SRIP): A matrix $A\in \Re^{m\times n}$ is said to satisfy the SRIP of order $(d,\alpha)$ if there
exist $\mu_d \geq \nu_d >0$ with $\mu_d/\nu_d<\alpha$ such that 
\begin{equation}
\nu _d \|w\|^2 \leq \|Aw\|^2 \leq \mu_d \|w\|^2, \quad \forall w\in 
A_d.
\label{eq:SRIP}
\end{equation}
In \cite{BT11}, the authors showed that the \textit{projected gradient
algorithm} with stepsize $\lambda \in (0.5\nu_{2s}^{-1},
\mu_{2s}^{-1}]$ is globally convergent to the solution set if the SRIP
of order $(2s,2)$ holds. This algorithm coincides with
\cref{eq:prox_alg} with $f (w) = \|Aw-b\|^2/2$, $g=\delta_{A_2}$, and $h\equiv 0$.

On the other hand, \cref{eq:Map} was used in \cite{HLN14} to solve
SAFPs, and its global convergence was proved under any of the following conditions on $A$:
\begin{enumerate}
	\item[(C1)] $AA^\T = I_m$ and the SRIP of order $(2s,2)$ holds with $\mu_{2s} = 1$; or
	\item[(C2)] there exists a constant $\nu_{2s} \in (0.5,1]$ such that $\nu_{2s} \|w\|^2 \leq \|A^{\dagger}Aw\|^2$ for all $w\in A_{2s}$.
\end{enumerate}

Meanwhile, we show in \cref{thm:global_ps_affine} that we can attain global convergence to fixed points of both the projected gradient algorithm with stepsize $\lambda \in (0,1/\|A\|^2)$ and of the relaxed MAP given by \cref{eq:Map_relaxed} with $\lambda \in (0,1)$ under a significantly weaker assumption that there exists $\mu_s>0$ such that
\begin{equation}
\nu_s \|w\|^2 \leq \|Aw\|^2, \quad \forall w\in A_s.
\label{eq:srip_half}
\end{equation}
This assumption is much weaker than the SRIP of order $(2s,2)$ used in
\cite{BT11} in two ways: (i) we have no restriction on the parameter
$\nu_s$, and (ii) the inequality \cref{eq:srip_half} is required to
hold over $A_s$ only, instead of over \modify{their larger} set
$A_{2s}$. \cpsolved{So you mean insted of $2s$?}\jhsolved{Yes.} Similarly,
condition (C1) used in \cite{HLN14} for \cref{eq:Map} is much stronger
than \cref{eq:srip_half}, as it not only assumes the SRIP as in
\cite{BT11}, but also requires semi-orthogonality of $A$ and a
specific value for $\mu_{2s}$. Condition (C2), on the other hand, is
also much stronger than \cref{eq:srip_half}, since it needs
to hold over the larger set $A_{2s}$ and requires a specific range of
values for $\nu_{2s}$. Together with the fact that
$\|A^{\dagger}Aw\|^2\leq \|A^{\dagger}\|^2 \| Aw\|^2$, \modify{(C2) implies
	\cref{eq:srip_half} for some} $\nu_s$.
	Since the assumption \cref{eq:srip_half} we use is significantly
	weaker than those in \cite{BT11,HLN14}, we obtain global convergence to fixed points only.
	However, under those same stronger conditions, we can obtain easily global convergence to the solution set as a direct consequence of our framework. \cpsolved{So, the existing
	works have convergence to a solution, while ours is simply to
fixed points, which is a super set of that, right?}\jhsolved{Yes. I added two sentences to point this out. }

To our knowledge, \cref{eq:Map} and the projected gradient algorithm
discussed above are the only available methods for SAFP in the literature.
We show in \cref{thm:global_pdmc_fb_sparse} that \cref{eq:Mavep} is
also globally convergent for SAFP under the same assumption of
\cref{eq:srip_half}. Moreover, we also present other new algorithms in
\cref{sec:SAFP} that also attain global convergence under the same condition.

\paragraph{Linear complementarity problem.}  There are two well-known algorithms for LCP that, similar to \cref{eq:Map} and \cref{eq:Mavep}, are also projection-based: the \emph{basic projection algorithm} 
(BPA) and the \emph{extragradient algorithm} (EGA) \citep[see][Algorithms 12.1.1 
and 12.1.9]{FP03}). BPA is suitable when the matrix $M$ 
associated with the LCP \cref{eq:LCP} is positive definite, in the 
sense that $x^\T M x >0$ for all nonzero vector $x$. On the other hand, EGA can 
handle a positive semidefinite $M$.
Meanwhile, all the algorithms we propose in \cref{sec:lcp} are new 
projection methods for LCP with guaranteed global convergence to fixed points for 
LCPs with a nondegenerate matrix and guaranteed global convergence to the solution set for 
LCPs with a $P$-matrix. The classes of nondegenerate and $P$-matrices both include the set of 
positive 
definite 
matrices, and therefore the proposed methods can solve those LCPs that are in 
the scope of BPA. On the other hand, both the sets of nondegenerate and 
$P$-matrices contain matrices that are not positive
semidefinite,\footnote{Symmetric $P$-matrices must be positive
	definite, but nonsymmetric $P$-matrices might have all principal
	minors positive while being indefinite. See
	\cite[Example~3.3.2]{CPS92} for an example.}
and therefore lead to LCP problems solvable by our approaches but not EGA. 

\section{Notations and Definitions}\label{sec:preliminaries}
We let $w_+ \coloneqq \max \{w,0\}$, where the maximum is taken componentwise. 
$\Ran (A)$ and $\Ker(A)$ denote respectively the range and the
kernel of a 
matrix $A\in \Re^{p\times q}$. We let $\|A\|$ denote the operator norm
of $A$.
We also let 
$[q]=\{1,\dots, q\}$, and for $\Lambda \subset [q]$, we denote by 
$A_{:,\Lambda}$ the submatrix of $A$ containing all of its columns indexed 
by $\Lambda$, $A_{\Lambda,\Lambda}$ the submatrix of $A$ containing its 
rows and columns indexed by $\Lambda$, and $\Lambda^c$ the complement set $\{i: i \in [q],
i \notin \Lambda\}$. Given $w\in \Re^q$,
$w_{\Lambda}\in 
\Re^{\abs{\Lambda}}$ denote the subvector of $w$ indexed by $\Lambda$. 

Throughout this paper, $\E$ is a Euclidean space endowed with the
inner product $\lla \cdot, \cdot \rla$ and we denote its induced
norm by $\|\cdot \|$.
For a nonempty and closed set $S\subset \E$,
we denote
by $\dist(w,S)
\coloneqq \min_{z\in S} \|w-z\|$ its \emph{distance function},
${\rm conv}(S)$ its convex hull, and
$B(S,\varepsilon) \coloneqq \{ z\in \E : \dist (w,S) < \varepsilon\}$
the \emph{open ball}\cpsolved{This is actually not a ball whe $S$ is not a
	singleton, but I can't think of a better name. If you don't have a
good idea either, we can continue using it anyway.}\jhsolved{This is the terminology used in \cite{AF09}, and I think that some topology books use this term too.} around it with radius $\varepsilon>0$.
The \emph{projection operator} onto $S$, 
$P_S:\E \rightrightarrows S$, is defined by $P_S(w) \coloneqq \argmin_{z\in 
	S}\|w-z\|$, and its \emph{indicator function} $\delta_S$
is defined by 
\[\delta_S(w) = \begin{cases}
0 & \text{if}~w\in S,\\
+\infty & \text{otherwise}.
\end{cases}\]

For a finite collection of sets $\D \coloneqq \{ D_{\iota}\subset \E : 
\iota 
\in 
\I\}$,
we define the set-valued function $\phi _{\D}:\bigcup_{\iota\in\I}
D_{\iota} \rightrightarrows 2^{\I}$ by 
\begin{equation}
\phi_{\D}(w) \coloneqq \{\iota: D_\iota \in \D, w\in
D_{\iota} \}.
\label{eq:indexfunction}
\end{equation}
Let $T:\E \rightrightarrows \E$ be a set-valued operator on $\E$. If $T$ is 
single-valued at $w\in \E$, say $T(w)=\{z\}$, we slightly abuse 
the notation and write $T(w) = z$. The identity operator on $\E$ is
denoted by $Id$, while the identity matrix in $\Re^n$ is denoted by $I_n$. $T$ is said to be \emph{calm at $w$} if $T(w)\neq \emptyset$ and there exists a neighborhood $\U$ of $w$ such that 
\begin{equation}
	\label{eq:Lipschitz_at_a_point}
	T(z) \subset T(w) + \kappa \norm{z-w}B(0,1) \quad \forall z\in \U
\end{equation}
for some \emph{calmness constant} \citep{RW98} $\kappa \geq 0$. A stronger property is pointwise Lipschitz continuity: $T$ is \emph{pointwise Lipschitz continuous at $w$} if there exists $\kappa \geq 0$ (called the Lipschitz constant) and a neighborhood $\U$ of $w$ such that 
$ \|z^+ - w^+ \|\leq \kappa \| z-w\|$ for all $z\in \U $, $z^+\in T(z)$, and $w^+ \in T(w)$.
From the definition, it is clear that $T$ must be single-valued at $w$, and therefore pointwise Lipschitz continuity is equivalent to having
\[ \|z^+ - T(w) \| \leq \kappa \| z- w\| \quad \forall z\in \U,  z^+ \in T(z).\]
If $T$ is single-valued, we say that it is \emph{$\kappa$-Lipschitz
continuous} if $\|T(z)-T(w)\| \leq \kappa \|z-w\|$ for all $z,w\in \E$
and \emph{nonexpansive} when $\kappa \leq 1$.
Further, if $\kappa < 1$, $T$ is called a \emph{contraction}.

Given a set $X\subset \E$ and a point $w\in X$ such that $T(w) \neq
\emptyset$, $T$ is \emph{upper semicontinuous (usc) at $w$} if for any
neighborhood $\U$ of $T(w)$, there exists $\delta_{\U}>0$ such that
for all $z\in X$ with $\|z-w\|<\delta_{\U}$, we have $T(z) \subseteq
\U$. Moreover, $T$ 
is usc (on $X$) if it is usc at each point in $X$ \citep{AF09}. 
\begin{remark}\label{remark:usc_property}
	 Suppose that $T$ is usc at $w$, $T(w)$ is compact, and $\{w^k\} \subset X$ such that $w^k\to w$. From the definition of upper semicontinuity, it can be shown that any sequence $\{z^k\} $ such that $z^k\in T(w^k)$ is bounded, and its accumulation points belong to $T(w)$. 
\end{remark}
From usc, we further define union upper semicontinuity of an operator,
which will be central to our algorithmic and theoretical development.
\begin{definition}[Union upper semicontinuity]\label{defn:union_usc}
	An operator $T:\E \rightrightarrows \E$ is said to be \emph{union
	upper semicontinuous (union usc) on $\E$} if there exist a
	collection of nonempty closed sets $\D = \{ D_{\iota} \subset \E :
\iota \in \I\}$ and upper semicontinuous operators $\{ T_{\iota}  : \iota \in \I\}$ with $T_{\iota} :\D_{\iota} \rightrightarrows \E$ such that $\ds T(w) = \bigcup _{\iota: \iota\in \I, w\in D_{\iota}}T_{\iota}(w)$ for all $w\in \E$, $\E = \bigcup _{\iota \in \I}D_{\iota}$, $T_{\iota}(w)$ is nonempty and compact for any $w\in D_{\iota}$, and $\I$ is a finite index set. The mappings $T_{\iota}$ are called the \emph{individual operators} of $T$.
\end{definition}
Unless otherwise specified, we always use the notations in \cref{defn:union_usc} when decomposing a union usc operator $T$.

Given $g:\E \to \Re \cup \{+\infty\}$, we denote by $\dom (g) = \{ w\in \E : 
g(w)<+\infty\}$ its \emph{domain}. We say that 
$g$ is \emph{$\rho$-convex} if $g(w) - \frac{\rho}{2}\|w\|^2$ is a 
convex function. In particular, $g$ is \emph{weakly convex} if $\rho<0$, 
\emph{convex} if $\rho \geq 0$, and \emph{strongly convex} if $\rho>0$.  The \emph{subdifferential of $g$ at $w$} is defined as
\begin{equation*}
\begin{aligned}
\partial g(w) &\coloneqq \limsup_{\bar w \rightarrow w, g(\bar w)
	\rightarrow g(w)} \hat \partial g(\bar w), \quad \text{ where
}\\
\hat \partial g(\bar w)
& \coloneqq \left\{v: v \in \E, g(z) \geq g(w) + \lla v, z-w \rla + o
(\|z-w\|)\right\},
\end{aligned}
\label{eq:subgradient}
\end{equation*}
which coincides with 
\begin{equation}
\partial g(w) = \{v\in \E : g(z) \geq g(w) + 
\lla v, z-w \rla,
~\forall z\in \E \},
\label{eq:partial_h_convex}
\end{equation}
when $g$ is convex.
Given $\lambda>0$, the \emph{Moreau envelope} and the (possibly set-valued) \emph{proximal
mapping} of $g$ are respectively defined by 
\begin{align}
	M_g^{\lambda} (w) &\coloneqq \min_{z\in \E} g(z) + 
	\frac{1}{2\lambda}\|z-w\|^2,
	\label{eq:moreau}\\
	\prox_{\lambda g} (w) &\coloneqq \argmin_{z\in \E} g(z) + 
	\frac{1}{2\lambda}\|z-w\|^2.
	\label{eq:prox} 
\end{align}
If $g = \delta_S$, then $\prox_{\lambda g} (w) $ reduces to the projector operator $P_S$ for any $\lambda>0$. 
For $S \subset \E$, we define $\prox_{\lambda g}(S) \coloneqq \bigcup 
_{w\in S}\prox_{\lambda 
	g} (w)$.
If there exists a finite family of functions $\{g_j : j\in J\}$, where 
$g_j:\E 
\to \Re \cup \{ +\infty\}$ for all $j \in J$, such that for any $w\in \E$, 
we 
have $g(w) \in 
\{g_j(w): j\in J'\}$ for some $J'\subset J$, we denote
\begin{equation}
	D_j(g) \coloneqq \{ w\in \dom(g): g(w) = g_j(w)\}.
	\label{eq:g_piecewise}
\end{equation}

We list some important properties of the proximal operator and
the Moreau envelope of a function $g$ that is the pointwise minimum of
a finite number of proper functions that will be utilized in this work.
\begin{lemma}[{\cite[Proposition 5.2]{DT19}}]
	\label{lemma:moreau_prox}
	Let $g=\min_{j\in J} g_j$, where $g_j$ is a proper function for
	all $j \in J$, $J$ is a finite set, and 
	$\lambda>0$. 
	Then
	\begin{enumerate}[(a)]
		\item $M_g^{\lambda} (w) = \min _{j\in J} M_{g_j}^{\lambda}(w)$ 
		for all $w\in \E$.
		\item $\prox_{\lambda g}(w) = \ds \bigcup_{j: w\in 
			D_j ({M_{g}^{\lambda}}) } 
		\prox_{\lambda g_j}(w)$, where $D_j ({M_{g}^{\lambda}}) \coloneqq \{ 
		w \in \E : M_g^{\lambda}(w) 
		= 
		M_{g_j}^{\lambda}(w)\}$.
	\end{enumerate}
\end{lemma}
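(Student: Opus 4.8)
The plan is to establish (a) by interchanging two minimizations and then to read off (b) from (a) via a short ``active piece'' argument. For (a), I would expand the definition of the Moreau envelope, substitute $g = \min_{j\in J} g_j$, and swap the outer infimum over $z\in\E$ with the inner finite minimum over $j\in J$:
\[
M_g^{\lambda}(w) = \min_{z\in\E}\left( \min_{j\in J} g_j(z) + \frac{1}{2\lambda}\norm{z-w}^2 \right) = \min_{j\in J}\min_{z\in\E}\left( g_j(z) + \frac{1}{2\lambda}\norm{z-w}^2 \right) = \min_{j\in J} M_{g_j}^{\lambda}(w).
\]
Since $J$ is finite, the interchange is immediate and the identity holds for every $w$, with or without attainment of the infima (using the convention $\min\emptyset = +\infty$).

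For the inclusion ``$\subseteq$'' in (b), I would pick $z^{\star}\in\prox_{\lambda g}(w)$, so $g(z^{\star}) + \frac{1}{2\lambda}\norm{z^{\star}-w}^2 = M_g^{\lambda}(w)$, and choose an active index $j^{\star}$ with $g(z^{\star}) = g_{j^{\star}}(z^{\star})$ (possible because $J$ is finite). Then
\[
M_{g_{j^{\star}}}^{\lambda}(w) \le g_{j^{\star}}(z^{\star}) + \frac{1}{2\lambda}\norm{z^{\star}-w}^2 = M_g^{\lambda}(w) \le M_{g_{j^{\star}}}^{\lambda}(w),
\]
the last step by (a). Hence both inequalities are equalities, giving simultaneously $M_{g_{j^{\star}}}^{\lambda}(w) = M_g^{\lambda}(w)$, i.e. $w\in D_{j^{\star}}(M_g^{\lambda})$, and $z^{\star}\in\prox_{\lambda g_{j^{\star}}}(w)$. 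For ``$\supseteq$'', I would take any $j$ with $w\in D_j(M_g^{\lambda})$ and any $z\in\prox_{\lambda g_j}(w)$; then, using $g\le g_j$ pointwise and (a),
\[
M_g^{\lambda}(w) \le g(z) + \frac{1}{2\lambda}\norm{z-w}^2 \le g_j(z) + \frac{1}{2\lambda}\norm{z-w}^2 = M_{g_j}^{\lambda}(w) = M_g^{\lambda}(w),
\]
so the chain collapses and $z\in\prox_{\lambda g}(w)$. The same chain also shows that when $\prox_{\lambda g}(w)=\emptyset$ every $\prox_{\lambda g_j}(w)$ with $w\in D_j(M_g^{\lambda})$ is empty too, so the identity survives the degenerate case.

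I do not anticipate a real obstacle. The only delicate point is the selection of the active index $j^{\star}$ in the forward direction of (b) and the check that $j^{\star}$ indeed lies in $\{j: w\in D_j(M_g^{\lambda})\}$; this is precisely what the squeeze $M_g^{\lambda}(w)\le M_{g_{j^{\star}}}^{\lambda}(w)\le M_g^{\lambda}(w)$ delivers. Finiteness of $J$ is used twice --- to interchange the minima in (a) and to guarantee that an active piece exists in (b) --- and is the one hypothesis that cannot be dropped.
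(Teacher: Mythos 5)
Your proof is correct. The paper does not prove this lemma at all---it simply cites \cite[Proposition 5.2]{DT19}---and your argument (interchanging the finite minimum over $j$ with the minimization over $z$ for part (a), then the active-index squeeze $M_g^{\lambda}(w)\le M_{g_{j^{\star}}}^{\lambda}(w)\le M_g^{\lambda}(w)$ for part (b)) is the standard one and supplies a complete, self-contained justification, including the degenerate case where the proximal sets are empty.
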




\section{Fixed point problems involving usc operators}\label{sec:general}
To establish global convergence for \cref{eq:prox_alg}, we first abstract
it as a fixed point algorithm \cref{eq:fixedpointalgorithm} associated
with a union usc operator $T$, and then obtain convergence guarantees
for \cref{eq:fixedpointalgorithm}.
In our analysis, we will make use of a Lyapunov function associated with the
operator $T$, which we define as follows. 
\begin{definition}[Lyapunov function]\label{defn:lyapunov}
	A function $V:\E \to \Re\cup \{+\infty\}$ continuous in its
	domain is a \emph{Lyapunov function} 
	for $T$ if $\inf V > -\infty$,
	\begin{equation}
	\sup _{w^+ \in T(w) }V(w^+) \leq V(w) ~for~any~w\in \E,
	\label{eq:Lyapunovcondition}
	\end{equation}
	\modify{and $w\in \Fix (T)$ whenever the equality holds}.\jhsolved{Revised the definition since we only need one direction anyway.} 
\end{definition}
Through utilizing such a Lyapunov function,
we also propose an acceleration strategy that uses the momentum
term as an easy-to-compute potential descent direction for the
Lyapunov function in \cref{alg:accelerated_FPA}.
The original fixed point algorithm \cref{eq:fixedpointalgorithm} is a
special case of \cref{alg:accelerated_FPA} by setting $t_k \equiv 0$.

We now show in \cref{thm:global_acce} that existence of a Lyapunov
function for $T$ is sufficient for guaranteeing that all accumulation
points of \cref{alg:accelerated_FPA}, and thus also of
\cref{eq:fixedpointalgorithm}, are fixed points.
A sufficient condition for the existence of such accumulation points
is that the Lyapunov function is coercive.

\begin{algorithm}[tb]
	\SetAlgoLined
	Let $V$ be a Lyapunov function for $T$.

	Choose $\sigma>0$ and $w^0 \in \E$. Set $w^{-1} = w^0$ and $k=0$.
	
	\begin{description}
		\item[Step 1.] Set $z^k \gets w^k + t_kp^k$, where $p^k \coloneqq
		w^k-w^{k-1}$
		and $t_k\geq 0$ is a stepsize such that 
		\begin{equation}
		V(z^k) \leq V(w^k)- 
		\frac{\sigma}{2}t_k^2 \| p^k\|^2.
		\label{eq:descent_acce_fpa}
		\end{equation}
		\item[Step 2.] Select $w^{k+1} \in T(z^k)$,
		$k \gets k+1$, and go back to Step 1. 
	\end{description}
	\caption{Accelerated fixed point algorithm for an operator $T$.}
	\label{alg:accelerated_FPA}
\end{algorithm}



\begin{theorem}[Global subsequential convergence of \cref{eq:fixedpointalgorithm} and
	\cref{alg:accelerated_FPA}]
	\label{thm:global_acce}
	Let $T$ be a union usc operator. If there exists a Lyapunov function $V$ for $T$, then any accumulation point of a 
	sequence generated by 
	\cref{alg:accelerated_FPA} belongs to $\Fix (T)$. In particular, any accumulation point of \cref{eq:fixedpointalgorithm} is a fixed point of $T$. 
\end{theorem}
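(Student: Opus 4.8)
The plan is to combine a monotone-descent argument for the Lyapunov values with an upper-semicontinuity closure argument carried out along nested subsequences. First I would show that $\{V(w^k)\}$ is nonincreasing and convergent: since $w^{k+1}\in T(z^k)$, the Lyapunov inequality \eqref{eq:Lyapunovcondition} at $z^k$ gives $V(w^{k+1})\le\sup_{w^+\in T(z^k)}V(w^+)\le V(z^k)$, while the step-size rule \eqref{eq:descent_acce_fpa} gives $V(z^k)\le V(w^k)-\tfrac{\sigma}{2}t_k^2\|p^k\|^2\le V(w^k)$. Since $\inf V>-\infty$, the sequence $\{V(w^k)\}$ converges to some $V^\star$; telescoping $\tfrac{\sigma}{2}t_k^2\|p^k\|^2\le V(w^k)-V(w^{k+1})$ yields $\sum_k t_k^2\|p^k\|^2<\infty$, hence $\|z^k-w^k\|=t_k\|p^k\|\to 0$, and $V(z^k)\to V^\star$ because it is squeezed between $V(w^{k+1})$ and $V(w^k)$. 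The statement for \eqref{eq:fixedpointalgorithm} is the special case $t_k\equiv 0$ (then $z^k=w^k$), so it suffices to treat \cref{alg:accelerated_FPA}.

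Next, let $w^\star$ be an accumulation point of $\{w^k\}$, say $w^{k_j}\to w^\star$; then $z^{k_j}\to w^\star$ as well, by the first step. Since $w^{k_j+1}\in T(z^{k_j})=\bigcup_{\iota:\,z^{k_j}\in D_\iota}T_\iota(z^{k_j})$, I can pick for each $j$ an index $\iota_j\in\I$ with $z^{k_j}\in D_{\iota_j}$ and $w^{k_j+1}\in T_{\iota_j}(z^{k_j})$. Because $\I$ is finite, I would pass to a subsequence along which $\iota_j\equiv\iota$ is constant; closedness of $D_\iota$ together with $z^{k_j}\in D_\iota$ and $z^{k_j}\to w^\star$ then gives $w^\star\in D_\iota$. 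Now $T_\iota$ is usc at $w^\star$ and $T_\iota(w^\star)$ is compact, so \cref{remark:usc_property} (applied on $X=D_\iota$ with $z^{k_j}\to w^\star$) shows that $\{w^{k_j+1}\}$ is bounded and that each of its accumulation points lies in $T_\iota(w^\star)$. Extracting one more subsequence, I obtain $w^{k_j+1}\to\bar w\in T_\iota(w^\star)\subseteq T(w^\star)$.

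Finally I would invoke the equality clause of the Lyapunov definition. By continuity of $V$ on its domain, $V(w^{k_j})\to V(w^\star)$ and $V(w^{k_j+1})\to V(\bar w)$; since both are subsequences of $\{V(w^k)\}\to V^\star$, this forces $V(w^\star)=V^\star=V(\bar w)$. Hence $\bar w\in T(w^\star)$ attains the value $V(w^\star)$, which combined with \eqref{eq:Lyapunovcondition} at $w^\star$ gives $\sup_{w^+\in T(w^\star)}V(w^+)=V(w^\star)$. By the defining property of a Lyapunov function, this equality forces $w^\star\in\Fix(T)$, which is the desired conclusion.

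The step I expect to be the main obstacle is the second one: the points driving the dynamics that converge, $z^{k_j}$, are not the points returned by the update, $w^{k_j+1}$, and the latter need not converge a priori. One must first stabilize the active index using finiteness of $\I$, then use closedness of the corresponding piece $D_\iota$ to keep the limit inside that piece's domain, and finally pass to yet another subsequence to extract a genuine image point $\bar w$ of $T$ at $w^\star$ --- all while the full-sequence convergence of $\{V(w^k)\}$ keeps $V(\bar w)$ pinned to $V(w^\star)$ so that the equality case of the Lyapunov condition is available. By contrast, the monotone-descent computation and the reduction of \eqref{eq:fixedpointalgorithm} to the case $t_k\equiv0$ are routine.
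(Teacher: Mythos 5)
Your proposal is correct and follows essentially the same route as the paper's proof: stabilize the active index $\iota$ via finiteness of $\I$, use closedness of $D_\iota$ and $z^{k_j}\to w^\star$ (from the telescoped descent condition) to place $w^\star$ in $D_\iota$, invoke upper semicontinuity and compactness to extract an accumulation point $\bar w\in T_\iota(w^\star)\subseteq T(w^\star)$, and then use convergence of $\{V(w^k)\}$ together with continuity of $V$ and the equality clause of \cref{defn:lyapunov} to conclude $w^\star\in\Fix(T)$. The only cosmetic difference is that you establish monotone convergence of $\{V(w^k)\}$ and $t_k\|p^k\|\to0$ for the full sequence up front, whereas the paper derives the latter only along the relevant subsequence; both are valid.
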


\begin{proof}
	First, we show that if $w^*$ is an accumulation point of a sequence generated by \cref{alg:accelerated_FPA}, then there exists  $(w^*)^+ \in T(w^*)$ such that $(w^*)^+$ is 
	also an accumulation point of $\{w^k\}$. To this end, let $\{ w^{k_j}\}_{j=0}^{\infty}$	be a subsequence of 
	$\{w^k\}$  that convergens to $w^*$. Now consider
	$\{ w^{k_j+1}\}_{j=0}^{\infty}$, where $w^{k_j+1} \in 
	T(z^{k_j})$. Since the index set $\I$ is finite, there exists $\iota\in\I$ 
	and a subsequence $\{ w^{k_{j_r}+1}\}_{r=0}^{\infty}$ of $\{ w^{k_j+1}\}_{j=0}^{\infty}$
	such that 
	$w^{k_{j_r}+1} \in T_{\iota}(z^{k_{j_r}})$. By the definition of $T$, we 
	have 
	$\{z^{k_{j_r}}\}_{r=0}^{\infty}\subset D_{\iota}$. We also note from 
	\cref{eq:descent_acce_fpa} and \cref{defn:lyapunov} that
	\[\frac{\sigma}{2} t_{k_{j_r}}^2 
	\|p^{k_{j_r}}\|^2\leq  V(w^{k_{j_r}}) - 
	V(w^{k_{j_r}+1}). \]
	By summing the inequality above from $r=0$ to infinity, we see that
	the monotonicity (from the algorithm) and the lower-boundedness of $V$
	(from \cref{defn:lyapunov})
	imply $t_{k_{j_r}} 
	\|p^{k_{j_r}}\| \to 0$, so $z^{k_{j_r}} \to w^*$.
	Therefore, by the
	closedness of $D_{\iota}$, we get $w^* \in D_{\iota}$, and thus
	$T_{\iota}(w^*) \subseteq T(w^*)$. 
	Since $T_{\iota}$ is usc at $w^*$, we have from \cref{remark:usc_property} 
	that $\{ w^{k_{j_r}+1}\}_{r=0}^{\infty}$ has a subsequence converging to 
	some $(w^*)^+ \in T_{\iota}(w^*) \subseteq T(w^*)$, as desired.

Next, we will show that $(w^*)^+ = w^*$ to prove that $w^*$ is a fixed
point of $T$.
By \cref{eq:Lyapunovcondition,eq:descent_acce_fpa}, the sequence $\{V(w^k)\}$ is monotonically
decreasing and bounded below, so $\{V(w^k)\}$ converges to a finite
value.
If $w^*$ is an accumulation point of $\{w^k\}$, we have from the first
part of the proof that there exists another accumulation point
$(w^*)^+ 
	\in T(w^*)$ of $\{w^k\}$. Since $\{V(w^k)\}$ is convergent, by
	taking the corresponding subsequences of $\{w^k\}$ that 
	converge to $w^*$ and $(w^*)^+$, we must have $V(w^*) = V((w^*)^+)$ by 
	the continuity of $V$. By \cref{defn:lyapunov}, we conclude that
	$w^* \in \Fix (T)$.
		\ifdefined\submit
		\qed
		\else
		\qedhere
		\fi
\end{proof}

The existence of a Lyapunov function is crucial for the conclusion of
\cref{thm:global_acce}, as illustrated in the following example.
\begin{example}\label{ex:not_fixed}
Let $T:\Re \rightrightarrows \Re$ be a usc operator (and therefore union usc) given by $T(w)=[-2w,2w]$ if $w>0$ and $T(w)=[\tfrac{w}{2},-2w]$ if $w\leq 0$.
The sequence with terms given by $w^k = (-1)^k$ can be generated from
\cref{eq:fixedpointalgorithm}, and it is clear that no Lyapunov
function in the sense of \cref{defn:lyapunov} exists for $T$.
Meanwhile, $-1$ and $1$ are accumulation points of $\{w^k\}$, but $-1$ is not a fixed point of $T$.  
\end{example}

With some mild conditions on the individual operators $T_{\iota}$ in
addition, we are able to establish the global convergence of the full
sequence generated by \cref{eq:fixedpointalgorithm}.
\begin{theorem}
	\label{thm:global_fullsequence}
	Let $T$ be a union usc operator with an associated Lyapunov function for $T$. Let $\{w^k\}$ be 
	a sequence generated 
	by~\cref{eq:fixedpointalgorithm} with an accumulation point $w^*$,
	and suppose for 
	each $\iota\in \phi_{\D}(w^*)$, $T_{\iota}$ is calm (see \cref{eq:Lipschitz_at_a_point}) at $w^*$ with
	parameter $\kappa_{\iota}\in [0,1]$.
	If $T$ is single-valued at 
	$w^*$, then $w^k\to 
	w^*$ and  $\phi_{\D}(w^k) \subset \phi_{\D}(w^*)$ for 
	all sufficiently large $k$. Moreover, the rate of convergence is locally 
	$Q$-linear 
	if $\kappa_{\iota}< 1$ for all $\iota\in\phi_{\D}(w^*)$. 
\end{theorem}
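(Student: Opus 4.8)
The plan is to localize the analysis near $w^*$: I will show that a sufficiently small ball around $w^*$ is forward invariant for \cref{eq:fixedpointalgorithm} and that on it the distance to $w^*$ is strictly decreasing away from $w^*$, and then exploit the hypothesis that $w^*$ is \emph{itself} an accumulation point to upgrade this into convergence of the full sequence. First, by \cref{thm:global_acce} we have $w^*\in\Fix(T)$, and since $T$ is single-valued at $w^*$ this forces $T(w^*)=\{w^*\}$; consequently $T_\iota(w^*)=\{w^*\}$ for every $\iota\in\phi_{\D}(w^*)$, because each such $T_\iota(w^*)$ is a nonempty subset of $T(w^*)$. As there are only finitely many indices, one can choose $r>0$ small enough that $\overline{B(w^*,r)}$ is disjoint from every $D_\iota$ with $\iota\notin\phi_{\D}(w^*)$ (these sets are closed and do not contain $w^*$) and is contained in the calmness neighborhood of $T_\iota$ at $w^*$ for every $\iota\in\phi_{\D}(w^*)$. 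In particular $\phi_{\D}(w)\subseteq\phi_{\D}(w^*)$ for all $w\in B(w^*,r)$.

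The key local estimate is that for any $w\in B(w^*,r)$ and any $w^+\in T(w)$ one has $\|w^+-w^*\|\le\|w-w^*\|$, with strict inequality whenever $w\neq w^*$. Indeed, write $w^+\in T_\iota(w)$ for some $\iota\in\phi_{\D}(w)\subseteq\phi_{\D}(w^*)$; calmness of $T_\iota$ at $w^*$ (see \cref{eq:Lipschitz_at_a_point}), applied at the point $w$, together with $T_\iota(w^*)=\{w^*\}$, gives $T_\iota(w)\subseteq\{w^*\}+\kappa_\iota\|w-w^*\|B(0,1)$. Since $B(0,1)$ is the \emph{open} unit ball and $\kappa_\iota\le1$, for $w\neq w^*$ this yields $\|w^+-w^*\|<\kappa_\iota\|w-w^*\|\le\|w-w^*\|$ when $\kappa_\iota>0$, while $\kappa_\iota=0$ forces $w^+=w^*$; either way $\|w^+-w^*\|<\|w-w^*\|$, and $w^+=w^*$ when $w=w^*$. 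Hence $\|w^+-w^*\|\le\|w-w^*\|<r$, so $B(w^*,r)$ is forward invariant for \cref{eq:fixedpointalgorithm}.

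To conclude, since $w^*$ is an accumulation point of $\{w^k\}$, some subsequence $w^{k_j}\to w^*$, so we may pick $K=k_{j_0}$ with $w^K\in B(w^*,r)$. By forward invariance, $w^k\in B(w^*,r)$ for all $k\ge K$, which immediately gives $\phi_{\D}(w^k)\subseteq\phi_{\D}(w^*)$ for all such $k$; moreover $\{\|w^k-w^*\|\}_{k\ge K}$ is non-increasing and nonnegative, hence convergent, and since it contains the subsequence $\|w^{k_j}-w^*\|\to0$, its limit is $0$, that is, $w^k\to w^*$. For the rate, if $\kappa_\iota<1$ for all $\iota\in\phi_{\D}(w^*)$, set $\kappa\coloneqq\max_{\iota\in\phi_{\D}(w^*)}\kappa_\iota<1$; the estimate of the previous paragraph then sharpens to $\|w^{k+1}-w^*\|\le\kappa\|w^k-w^*\|$ for all $k\ge K$, which is local $Q$-linear convergence.

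The only genuinely delicate point is the borderline case $\kappa_\iota=1$: a map that is merely nonexpansive toward $w^*$ (think of a rotation) need not produce convergent orbits, so nonexpansiveness alone would not close the argument. What rescues it is that calmness is stated with the \emph{open} ball $B(0,1)$, which turns the nonexpansive bound into the strict decrease of $\|w^k-w^*\|$ used above; the remainder is routine bookkeeping with the finitely many inactive index sets and calmness neighborhoods, together with the observation that it is $w^*$ itself, not merely some accumulation point, that the monotone distances must reach.
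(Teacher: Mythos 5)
Your proof is correct and follows essentially the same route as the paper's: identify $T_\iota(w^*)=\{w^*\}$ from single-valuedness, use the component-identification lemma to restrict to active indices near $w^*$, apply calmness to get $\|w^{k+1}-w^*\|\le\kappa_\iota\|w^k-w^*\|$ and forward invariance, and then combine monotonicity of the distances with the subsequence converging to $w^*$ to conclude. One small caveat: your closing paragraph overstates the role of the open ball in \cref{eq:Lipschitz_at_a_point} --- the strict decrease is not what closes the argument (the paper never uses it); what rules out the rotation-type obstruction is precisely that $w^*$ itself is an accumulation point, so the monotone nonincreasing sequence $\|w^k-w^*\|$ has a subsequence tending to $0$ and hence tends to $0$, exactly as in your third paragraph.
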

The following lemma for component identification is needed for proving
\cref{thm:global_fullsequence}.
\begin{lemma}\label{lemma:invariantball}
	Let $\D=\{ D_{\iota} : \iota \in \I\}$ be any finite collection of 
	closed 
	sets in $\E$ and denote $\mathbb{U} \coloneqq \bigcup _{\iota\in\I} 
	D_{\iota}$. 
	Then for any $w^* \in \mathbb{U}$, there exists $\delta>0$ such that 
	$\phi_{\D}(w) \subset \phi_{\D} (w^*)$ for all 
	$w\in B(w^*,\delta)\cap \mathbb{U}$, where $\phi_{\D}$ is defined
	by \cref{eq:indexfunction}.
\end{lemma}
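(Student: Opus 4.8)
The plan is to exploit the closedness of each $D_\iota$ together with the finiteness of $\I$ to extract a single positive radius that simultaneously excludes all the ``inactive'' sets. The key observation is that for any index $\iota$ with $\iota\notin\phi_{\D}(w^*)$ we have $w^*\notin D_\iota$, and since $D_\iota$ is closed this forces $\dist(w^*,D_\iota)>0$ (adopting the convention $\dist(\,\cdot\,,\emptyset)=+\infty$ to cover the case of empty members of $\D$). Because $\I$ is finite, I would then set
\[
\delta \coloneqq \min_{\iota\in\I\setminus\phi_{\D}(w^*)}\dist(w^*,D_\iota),
\]
interpreting the minimum over the empty index set (which occurs precisely when $w^*$ lies in every $D_\iota$) as $+\infty$ and in that case taking $\delta$ to be any positive number; in all cases $\delta>0$ as the minimum of finitely many strictly positive quantities. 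Note that $w^*\in\mathbb{U}$ guarantees $\phi_{\D}(w^*)\neq\emptyset$, although this is not strictly needed.

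With this choice of $\delta$ I would verify the claimed inclusion directly. Let $w\in B(w^*,\delta)\cap\mathbb{U}$ and pick any $\iota\in\phi_{\D}(w)$, so that $w\in D_\iota$. If we had $\iota\notin\phi_{\D}(w^*)$, then the definition of the distance function together with the definition of $\delta$ would yield
\[
\dist(w^*,D_\iota)\;\leq\;\norm{w-w^*}\;<\;\delta\;\leq\;\dist(w^*,D_\iota),
\]
a contradiction. Hence $\iota\in\phi_{\D}(w^*)$, and since $\iota\in\phi_{\D}(w)$ was arbitrary we conclude $\phi_{\D}(w)\subset\phi_{\D}(w^*)$, as desired.

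I do not anticipate a genuine obstacle here; the argument is elementary and the only points requiring care are the degenerate cases: when $w^*$ belongs to all sets in $\D$ (no index is excluded and any $\delta>0$ works) and when some $D_\iota$ is empty (then $\iota$ never lies in $\phi_{\D}(w)$ for any $w$, consistent with $\dist(\,\cdot\,,\emptyset)=+\infty$). This component-identification property is exactly what will later let us argue, in the proof of \cref{thm:global_fullsequence}, that once iterates enter a small neighborhood of an accumulation point $w^*$ only the individual operators $T_\iota$ with $\iota\in\phi_{\D}(w^*)$ can be selected.
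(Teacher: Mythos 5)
Your proposal is correct and follows essentially the same route as the paper: both arguments use closedness of each inactive $D_\iota$ to produce a positive exclusion radius (the paper phrases this as a ball $B(w^*,\delta_\iota)$ disjoint from $D_\iota$, obtained by a sequence/contradiction argument, while you phrase it as $\dist(w^*,D_\iota)>0$), and then take the minimum over the finitely many inactive indices. Your extra care with the degenerate cases (empty members of $\D$, no inactive indices) is a harmless refinement of the same argument.
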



\begin{proof}
	Given $\iota \notin \phi_{\D}(w^*)$, there is
	$\delta_{\iota}>0$
	such that $B(w^*,\delta_{\iota}) \cap D_{\iota} =
	\emptyset$.
	Otherwise, we can construct a sequence
	$\{w^k\}\subset D_{\iota}$ converging to $w^*$.
	By the closedness of $D_{\iota}$, this implies $\iota \in
	\phi_{\D}(w^*)$, contradicting the assumption.
	Setting $\delta 
	= \min \{ \delta_{\iota}:\iota\notin\phi_{\D}(w^*)\}$, we see that 
	$B(w^*,\delta) \cap D_{\iota} = \emptyset$ for all $\iota\notin 
	\phi_{\D}(w^*)$. In other words, if $\iota \in 
	\phi_{\D}(w)$ (\textit{i.e.},  $w \in D_{\iota}$) and $w\in B(w^*,\delta)\cap \mathbb{U} $, then $\iota \in 
	\phi_{\D}(w^*)$.
		\ifdefined\submit
		\qed
		\else
		\qedhere
		\fi
\end{proof}

\begin{proof}[{\cref{thm:global_fullsequence}}]
	Since $w^*\in \Fix (T)$ by \cref{thm:global_acce} and $T$ is
	single-valued at $w^*$, we get $T_{\iota}(w^*) = w^*$ for all
	$\iota \in \phi_{\D}(w^*)$.
	Meanwhile, using 
	\cref{lemma:invariantball}, we can find $\delta>0$ such that 
	$\phi_{\D}(w)\subset \phi_{\D}(w^*)$ for all 
	$w \in B(w^*,\delta)$.
	We can then find a subsequence $\{w^{k_j}\}_{j=0}^{\infty} \subset
	B(w^*, \delta)$ of
	$\{w^k\}$ such that $w^{k_j}\to w^*$.
	Let $\iota_0 \in \phi_{\D} 
	(w^{k_0})$ be such that $w^{k_0+1} \in T_{\iota_0}(w^{k_0})$. Since 
	$w^{k_0}\in B(w^*,\delta)$, we have 
	$\iota_0\in \phi_{\D}(w^*)$ and thus $w^* \in T_{\iota_0}(w^*)$.
	By 
	\cref{eq:Lipschitz_at_a_point}, 
	\begin{equation*}
	\|w^{k_0+1} - w^*\| \leq \kappa_{\iota_0} \| w^{k_0} - w^*\| \leq
	\kappa 
	\delta,
	\label{eq:T_lipschitz}
	\end{equation*}
	where $\kappa \coloneqq \max \{ \kappa_{\iota}: \iota\in 
	\phi_{\D}(w^*) \}$. Thus, 
	$w^{k_0+1}\in  B(w^*, \delta)$ and we may proceed 
	inductively to conclude that $\|w^{k+1}-w^*\| \leq \|w^k - w^*\|$ for 
	all $k\geq k_0$ and
	\[\|w^k-w^*\| \leq \kappa^{k-k_0} \|w^{k_0}-w^*\|,\quad \forall k\geq 
	k_0. \]
	Thus, $\{\| w^k - w^*\| \}_{k=k_0}^{\infty}$ is a decreasing sequence 
	that is bounded below, and is therefore convergent. Since 
	$\|w^{k_j}-w^*\| \to 0$, it follows that $\{\| w^k - 
	w^*\| \}_{k=0}^{\infty}$ also converges to $0$, that is, $w^k \to w^*$. 
		\ifdefined\submit
		\qed
		\else
		\qedhere
		\fi
\end{proof}

%
%

\begin{remark}[Component identification]
	\label{remark:component}
	If $D\subset \E$ is any closed set such that $D\cap \Fix (T) = 
	\emptyset$ 
	and if $\{w^k\}$ generated by \cref{eq:fixedpointalgorithm} is bounded, 
	then $D$ contains at most finitely many terms of $\{w^k\}$ by 
	\cref{thm:global_acce}. Thus, only those $D_{\iota}$ 
	containing a fixed point of $T$ can possibly contain infinitely many terms 
	of 
	$\{w^k\}$. Moreover, the conclusion of \cref{thm:global_fullsequence} 
	that 
	$\phi_{\D}(w^k) 
	\subset 
	\phi_{\D}(w^*)$ for all large $k$ allows us to identify 
	the operators $T_{\iota}$ that will yield the fixed point $w^*$ of $T$. In 
	particular, this result implies that a fixed point of $T_{\iota}$ with $\iota \in \phi_{\D}(w^k)$
	corresponds to 
	a fixed point of $T$, provided that $k$ is chosen large enough.	
\end{remark}

\modify{The following example shows the essentiality of the condition
	of single-valuedness at an accumulation point
	for global convergence in \cref{thm:global_fullsequence}}.

\begin{example}
	\modify{
	Let $T=T_1\cup T_2$ where $T_i(x,y) =
	((-1)^i,y)$ for $i=1,2$. $T_i$ are nonexpansive, and the function
	$V(x,y) = x^2 + \delta_S(x,y)$ with $S = \{ (x,y)\in \Re^2 : |x| = 1 \}$
	is a Lyapunov function for $T$. Moreover, $\Fix (T) = \{ (x,y)
		\in \Re^2 :|x| = 1\}$, and $T$ is not single-valued
		anywhere. When initialized at a point $(x^0,y^0) $,
		the iterations given by
		\eqref{eq:fixedpointalgorithm} may oscillate between the fixed
		points $(-1,y^0) $ and $(1,y^0)$, showing that global
	convergence may not take place. }
	\cpsolved{Looks like $T_3$ is not needed so I removed it to simplify the
	example.}
\end{example}

\section{Applications to optimization}
\label{sec:minconvexoptimization}
We now focus on the optimization problem
\cref{eq:minconvex_optimization}
with $f$, $g$ and $h$ possibly nonconvex, and $f+g-h$
bounded from below. We consider $g$ that belong to 
the class of \emph{min-$\rho$-convex} functions defined below, 
which is a generalization of min-convex functions introduced in \cite{DT19}. 

\begin{definition}[min-$\rho$-convex function]
	\label{defn:minconvex}
	We say that $g:\E \rightarrow \Re \cup \{ +\infty\}$ is a 
	\emph{min-$\rho$-convex} function if
	there exist a finite index set $J$, and $\rho$-convex, 
	proper,
	and lower semicontinuous functions $g_j :\E \rightarrow \Re \cup \{ 
	+\infty\}$, $j\in J$, such that
	\begin{equation*}
		g(w) = \min _{j\in J}\, g_j(w), \quad \forall w\in \E.
		\label{eq:g_rho_convex}
	\end{equation*}
	We call $g$ \emph{min-convex} 
	if $\rho\geq 0$.
\end{definition}


We formalize below the assumptions on $f$, $g$ and $h$ described in \cref{sec:approach}. 

\begin{assumption} \text{} 
	\label{assum:min_convex}
	\begin{enumerate}[(a)]
		\item The functions $f$, 
		$g$ and $h$ are expressible as 
		\[f = \min _{i\in I}\, f_i, \quad g=\min_{j\in J}\, g_j, \quad 
		\text{and} 
		\quad 
		h=\max_{m\in M}\, h_m,\]
		where $I$, $J$ and $M$ are finite index sets.
		\item For each $i\in I$, $f_i$ has $L_i$-Lipschitz continuous 
		gradient in $\E$ for some $L_i>0$. 
		\item For each $j\in J$, $\dom(g_j)$ is closed, and $g_j$ is a proper 
		and $\rho$-convex 
		function continuous in $\dom(g_j)$. 
		\item For each $m\in M$, $h_m$ is a continuously differentiable 
		convex function 
		in $\E$. 
		\item For all $(i,j,m)\in I\times J\times M$, the function $f_i + 
		g_j-h_m$ is 
		coercive over $\E$. 
	\end{enumerate}
\end{assumption}

\begin{remark}
	\label{remark:consequences_assumption}
	We mention some consequences of the above assumptions.  
	\begin{enumerate}[(a)]
		\item By \cref{assum:min_convex} 
		(b), we have from the descent lemma (see, for example, \cite[Lemma 
		5.7]{Beck17}) that 
		\begin{equation}
			f_i(z) \leq f_i(w) + \lla \nabla f_i(w) , z-w \rla + 
			\frac{L_i}{2}\|z-w\|^2, \quad \forall w,z\in \E. 
			\label{eq:descentlemma}
		\end{equation} 
		
		\item With
		\cref{assum:min_convex} (b)-(d), the sets $D_i(f) $,  $D_j(g)$
		and $D_m(h)$ defined as in \cref{eq:g_piecewise} are closed for any 
		$(i,j,m)\in I\times J\times M$. 
		Hence, by \cref{assum:min_convex} (a), we may 
		write 
		$\dom(g)$ 
		as the union of a finite number of closed sets:
		\begin{equation}
			\dom(g)=\bigcup _{(i,j,m)\in 
				I\times J\times M} D_i(f) \cap D_j(g) \cap D_m(h)
			\label{eq:E=union_D}		
		\end{equation}
		
		\item From \cref{assum:min_convex} (c), $g_j(z) + 
		\frac{1}{2\lambda}\|z-w\|^2$ is a strongly convex function of $z$ 
		for any $\lambda\in (0,\bar{\lambda})$, where 
		\begin{equation}
			\bar{\lambda} = \begin{cases}
				-\frac{1}{\rho} & \text{if}~\rho <0, \\
				+\infty & \text{if}~\rho \geq 0.
			\end{cases}
			\label{eq:lambda_bar}
		\end{equation}
		Thus, $\prox_{\lambda g_j}$ defined by \cref{eq:prox}
		is single-valued for any $\lambda\in (0,\bar{\lambda})$ in
		$\E$. It is also not difficult to show that $\prox_{\lambda
		g_j}$ is $(1 + \rho \lambda)^{-1}$-Lipschitz continuous for
		all $\lambda$ in the same range,
		so $\prox_{\lambda g_j}$ is nonexpansive when $\rho\geq 0$.
		It also follows that the Moreau envelope $M_{g_j}^{\lambda}$ 
		of $g_j$ is continuous. 
		
		\item 
		By \cref{assum:min_convex} (a) and (d), $h$ is convex, so we have from 
		\cite[Theorem 3.50]{Beck17} that 
		\begin{equation*}
			\partial h(w) = {\rm conv} \left( \left\lbrace \nabla h_m(w) : m\in M 
			~\text{such that } w\in D_m(h)\right\rbrace \right).
			\label{eq:partial_h}
		\end{equation*}	
		\item[(e)]
		\cref{assum:min_convex} (a) and \cref{eq:E=union_D}
		indicate that \cref{assum:min_convex} (e) implies coerciveness of $f+g-h$.  
	\end{enumerate}
	
\end{remark}

%
We revisit the proximal algorithm \cref{eq:prox_alg}, which we recall as follows:
\begin{equation}
	w^{k+1} \in \Tpdmc (w^k) \coloneqq 
	\prox_{\lambda 
		g}\left( w^k - 
	\lambda f'(w^k) + \lambda h'(w^k) \right), \tag{PDMC}
	\label{eq:pdmc}
\end{equation}
where $\lambda\in (0,\min \{\bar{\lambda}, 1/L \})$,
 $\bar{\lambda}$ is given by 
\cref{eq:lambda_bar}, $L\coloneqq \max_{i\in I} L_i$ with $L_i$ given in 
\cref{assum:min_convex} (b),
$f',h':\E \rightrightarrows \E$ are defined by 
\begin{equation}
	\label{eq:prime}
\begin{aligned}
	f'(w) &\coloneqq \left\lbrace \nabla 
	f_i(w) : i\in I~\text{such 
	that } 
	w\in D_i(f) \right\rbrace, \\
	h'(w) &\coloneqq \left\lbrace\nabla 
	h_m (w) : 
	m\in M ~\text{such that }w\in D_m(h)\right\rbrace.
\end{aligned}
\end{equation}
We call the iterations \cref{eq:pdmc}
the \emph{proximal difference-of-min-convex algorithm} (or PDMC, for short). 

For specific settings of $g$ and/or $h$, we recover several familiar
algorithms from \cref{eq:pdmc}.
When $h\equiv 0$, we obtain the \emph{forward-backward algorithm} given by 
\begin{equation}
w^{k+1} \in \Tfb (w^k) \coloneqq 
\prox_{\lambda 
	g}\left( w^k - 
\lambda f'(w^k)  \right). \tag{FB}
\label{eq:fb}
\end{equation}
When $h\equiv 0$ and $g$ is the indicator 
function of a union convex set $S$,
\cref{eq:minconvex_optimization} reduces to a \emph{union convex set-constrained} problem given by
\begin{equation}
	\min f(w) \quad \text{subject to } w\in S.
	\label{eq:min_over_UCS}
\end{equation}
and \cref{eq:pdmc} simplifies to the \emph{projected subgradient algorithm}
\begin{equation}
w^{k+1} \in \Tps (w^k) \coloneqq P_S (w^k - \lambda 
f'(w^k)). \tag{PS}
\label{eq:PS}
\end{equation}
Since $S$ is a union convex set, there exists a finite collection of
closed convex sets $\{ R_j:j\in J\}$ such that $S = \bigcup_{j\in
J}R_j$\modify{, and thus $g=\delta_S=\min_{j\in J}\delta_{R_j}$}
is a min-convex function satisfying \cref{assum:min_convex}. 


\subsection{Global subsequential convergence to fixed points}
In the setting of optimization problems, the objective function is the natural 
choice of Lyapunov function for descent algorithms. We show this in the 
next theorem and use \cref{thm:global_acce} to establish global
subsequential convergence for \cref{eq:pdmc}.
\begin{theorem}
	\label{thm:global_pdmc}
	Let $\{w^k\}$ be any sequence generated by 
	\cref{eq:pdmc} 
	with $\lambda\in (0,\min \{\bar{\lambda}, 1/L \})$. Under 
	\cref{assum:min_convex},
	$\{w^k\}$ is bounded and its 
	accumulation points belong to $\Fix (\Tpdmc)$. 
\end{theorem}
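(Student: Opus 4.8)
The plan is to reduce the claim to \cref{thm:global_acce} by exhibiting $\Tpdmc$ as a union usc operator admitting a Lyapunov function, for which the natural candidate is the objective $V\coloneqq f+g-h$ of \cref{eq:minconvex_optimization}. Boundedness of $\{w^k\}$ will then follow from the monotonicity of $\{V(w^k)\}$ along the descent inequality below together with the coercivity in \cref{assum:min_convex}(e).

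First I would set up the union usc structure. Let $v_{im}(w)\coloneqq w-\lambda\nabla f_i(w)+\lambda\nabla h_m(w)$, which is continuous on $\E$ since each $f_i$, $h_m$ is $C^1$. By \cref{lemma:moreau_prox}(b),
\[
\Tpdmc(w)=\bigcup_{i:\,w\in D_i(f)}\ \bigcup_{m:\,w\in D_m(h)}\ \bigcup_{j:\,v_{im}(w)\in D_j(M_g^{\lambda})}\prox_{\lambda g_j}\bigl(v_{im}(w)\bigr)
\]
for every $w$, so I take $\I=I\times J\times M$, put $T_{ijm}(w)\coloneqq\prox_{\lambda g_j}(v_{im}(w))$, and set $D_{ijm}\coloneqq\{w\in D_i(f)\cap D_m(h):v_{im}(w)\in D_j(M_g^{\lambda})\}$. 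Each $D_{ijm}$ is closed, because $D_i(f)$ and $D_m(h)$ are closed by \cref{remark:consequences_assumption}(b), $D_j(M_g^{\lambda})$ is closed since the Moreau envelopes $M_{g_j}^{\lambda}$ are continuous by \cref{remark:consequences_assumption}(c), and $v_{im}$ is continuous; these sets cover $\E$ because every $w$ has an active $f_i$ and an active $h_m$, and the finite minimum $M_g^{\lambda}(v_{im}(w))=\min_j M_{g_j}^{\lambda}(v_{im}(w))$ is attained by some $j$. Finally, for $\lambda<\bar\lambda$ each $\prox_{\lambda g_j}$ is single-valued and $(1+\rho\lambda)^{-1}$-Lipschitz by \cref{remark:consequences_assumption}(c), so each $T_{ijm}$ is single-valued and continuous, hence usc with nonempty compact values. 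Thus $\Tpdmc$ is union usc in the sense of \cref{defn:union_usc}.

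Next I would verify that $V$ is a Lyapunov function for $\Tpdmc$. It is continuous on $\dom V=\dom g$ (glue the continuous pieces $g_j|_{D_j(g)}$ over the closed cover $\{D_j(g)\}_{j\in J}$) and bounded below by \cref{assum:min_convex}(e). Fix $w$ and $w^+\in\Tpdmc(w)$, say $w^+=\prox_{\lambda g_j}(v)$ with $v=v_{im}(w)$ and $w\in D_{ijm}$. The key step is to route the $g$-term through the Moreau envelope of $g$, not of $g_j$: since $v\in D_j(M_g^{\lambda})$,
\[
g_j(w^+)+\tfrac{1}{2\lambda}\norm{w^+-v}^2=M_{g_j}^{\lambda}(v)=M_g^{\lambda}(v)\le g(w)+\tfrac{1}{2\lambda}\norm{w-v}^2 ,
\]
and expanding $\norm{w-v}^2=\norm{w-w^+}^2+2\lla w-w^+,w^+-v\rla+\norm{w^+-v}^2$ and using $w-v=\lambda(\nabla f_i(w)-\nabla h_m(w))$ yields
\[
g_j(w^+)\le g(w)-\tfrac{1}{2\lambda}\norm{w-w^+}^2+\lla\nabla f_i(w)-\nabla h_m(w),\,w-w^+\rla .
\]
Adding the descent lemma \cref{eq:descentlemma} for $f_i$ at $w^+$ and the gradient inequality $-h_m(w^+)\le-h_m(w)+\lla\nabla h_m(w),w-w^+\rla$, and using $f(w)=f_i(w)$, $h(w)=h_m(w)$ (active pieces at $w$) together with $f(w^+)\le f_i(w^+)$, $g(w^+)\le g_j(w^+)$, $-h(w^+)\le-h_m(w^+)$, all first-order terms cancel and the quadratics combine to
\[
V(w^+)\le V(w)-\tfrac12\Bigl(\tfrac1\lambda-L\Bigr)\norm{w-w^+}^2 .
\]
Since $\lambda<1/L$ the coefficient is positive; as $\Tpdmc(w)$ is a finite union of singletons the supremum in \cref{eq:Lyapunovcondition} is attained, so $\sup_{w^+\in\Tpdmc(w)}V(w^+)\le V(w)$, and equality forces $w^+=w\in\Tpdmc(w)$, i.e.\ $w\in\Fix(\Tpdmc)$. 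Hence $V$ is a Lyapunov function for $\Tpdmc$.

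Finally, \cref{thm:global_acce} yields that every accumulation point of $\{w^k\}$ lies in $\Fix(\Tpdmc)$, while the displayed inequality along the iterates makes $\{V(w^k)\}$ nonincreasing, so $\{w^k\}\subset\{w:V(w)\le V(w^0)\}$, which is bounded by the coercivity of $V$. I expect the main obstacle to be the bookkeeping in the union usc decomposition — recognizing that the pieces are indexed not by the triple of functions active at $w$, but by the $f_i$, $h_m$ active at $w$ \emph{and} the Moreau-envelope piece of $g$ active at the shifted point $v_{im}(w)$ — and, correspondingly, closing the Lyapunov estimate with $g(w)$ rather than $g_j(w)$ on the right, which is exactly what the Moreau-envelope routing above is for.
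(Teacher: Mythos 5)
Your proposal is correct and follows essentially the same route as the paper: the identical union usc decomposition $T_{ijm}=\prox_{\lambda g_j}\circ(Id-\lambda\nabla f_i+\lambda\nabla h_m)$ over the sets $D_{ijm}$, the objective $V=f+g-h$ as Lyapunov function, and an appeal to \cref{thm:global_acce}. The only cosmetic difference is that you derive the sufficient-decrease inequality \cref{eq:V_Lyapunov} explicitly via the Moreau-envelope identity $M_{g_j}^{\lambda}(v)=M_g^{\lambda}(v)\le g(w)+\tfrac{1}{2\lambda}\norm{w-v}^2$, whereas the paper routes it through the convex majorant $Q_f^{\lambda}(\cdot,w)-L_h(\cdot,w)+g$ and cites standard calculations; the two computations are equivalent.
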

\begin{proof}
	By \cref{thm:global_acce}, it suffices to show that 
	$T_{\rm 
		PDMC}^{\lambda}$ is a union usc operator, and that there exists a Lyapunov function for $T_{\rm 
		PDMC}^{\lambda}$. 
	First, we claim that $V\coloneqq f+g-h$ is a Lyapunov function for 
	\cref{eq:pdmc}. Simple algebraic manipulations of 
	\cref{eq:descentlemma}
	give
	\begin{equation}
		\frac{L_i}{2}\|z\|^2 - f_i(z) \geq \frac{L_i}{2}\|w\|^2 - f_i(w) + 
		\lla 
		L_iw - \nabla f_i(w), z-w\rla, \quad \forall w,z\in \E, \,
		\forall i \in I,
		\label{eq:descentlemma_to_convexity}
	\end{equation}
	and since $L\geq L_i$, we have that $\frac{L}{2}\|w\| - f_i(w)$ is convex. 
	Hence, 
	\[\frac{L}{2}\|w\|^2 - f(w) = \max _{i\in I} \left\lbrace 
	\frac{L}{2}\|w\|^2 - f_i(w)\right\rbrace\] 
	is also a convex function. By Theorem 3.50 of \cite{Beck17},
	\begin{align*}
		\partial \left( \frac{L}{2}\|w\|^2 - f(w) \right)
		= {\rm conv} 
		\left\lbrace Lw - \nabla f_i(w): i\in I~\text{s.t. }w\in 
		D_i(f) \right\rbrace \supseteq Lw - f'(w). 
	\end{align*}
	Thus, 
	\begin{equation*}
		\frac{L}{2}\|z\|^2 - f(z) \geq \frac{L}{2}\|w\|^2 - f(w) + 
		\lla 
		Lw - y, z-w\rla, \quad \forall w,z\in \E, \,
		\forall y \in f'(w).
		\label{eq:descentlemma_to_convexity2}
	\end{equation*}
	By reversing the 
	algebraic manipulations done to get \cref{eq:descentlemma_to_convexity} 
	from \cref{eq:descentlemma}, we have 
	\begin{equation}
		f(z) \leq Q_f^{\lambda}(z,w) \coloneqq f(w) + \lla y, z-w \rla + 
		\frac{1}{2\lambda}\|z-w\|^2,\quad \forall 
		w,z\in \E,
		\label{eq:f<=Qf}
	\end{equation}
	for any $y\in f'(w)$ and $\lambda \in (0,1/L]$. Now, let $w^+ 
	\in 
	\Tpdmc(w)$ for some $w\notin \Fix 
	(\Tpdmc)$, say $w^+ \in \prox_{\lambda g} (w - 
	\lambda y + \lambda v)$ for some $y\in f'(w)$ and $v\in 
	h'(w) \subset \partial h(w)$. 
	From \cref{eq:f<=Qf} and \cref{eq:partial_h_convex}, we have
	\begin{equation}
		V(z) \leq Q_f^{\lambda}(z,w) - L_h (z,w) + g(z), \quad \forall
		z\in\E,\quad L_h(z,w) \coloneqq h(w) + \lla v,z-w\rla.
		\label{eq:V<=Q_L_g}
	\end{equation}
	From that $Q^\lambda_f(w,w) - L_h(w,w) + g(w) \equiv V(w)$,
	standard calculations (see for example, \citep[Section~5]{HA13a}) yield
	for $\lambda \in (0,\bar \lambda)$ that
	
	\begin{align}
		\prox_{\lambda g}(w-\lambda y + \lambda v) &\in \argmin_{z\in \E} \,
		Q_f^{\lambda}(z,w) -L_h(z,w) + g(z),
		\label{eq:prox=argmin}\\
		V(w) - V(w^+) &\geq \frac{1-\lambda L}{2\lambda}\|w^+-w\|^2. 
		\label{eq:V_Lyapunov}
	\end{align}
	Thus, $V(w^+) < V(w)$ provided $\lambda \in (0,\min 
	\{\bar{\lambda},1/L\})$, proving that $V$ 
	is a Lyapunov 
	function 
	for 
	\cref{eq:pdmc}. 

	It remains to prove that each $D_{\iota}$ is closed to verify that
	$T$ is a union usc operator.
	For each $\iota \coloneqq (i,j,m)\in I\times J \times 
	M$, we define 
	$T_{\iota} : D_{\iota} \to \E$ by 
		\begin{align}
		\label{eq:T_ijl}
			T_{\iota} &\coloneqq \prox_{\lambda g_j} \circ (Id-\lambda \nabla 
			f_i+ 
			\lambda \nabla h_m), \quad \text{where} \\
			D_{\iota} &\coloneqq 
			\left\{ w\in \E : w\in D_i(f) \cap D_m(h), w-\lambda \nabla 
			f_i(w) +\lambda \nabla h_m(w) \in
			D_{j}(M_g^{\lambda})\right\}.
			\nonumber
		\end{align}
	By \cref{lemma:moreau_prox} (b), we obtain $\Tpdmc(w) = 
	\bigcup _{\iota: w\in 
		D_{\iota}}T_{\iota}(w)$. Since $\prox_{\lambda g_j}$ is continuous on 
		$\E$ (see 
	\cref{remark:consequences_assumption} (c)), 
	so is $T_{\iota}$. By the continuity of  $M_{g_j}^{\lambda}$ together 
	with \cref{lemma:moreau_prox} (a), 
	$D_j ({M_{g}^{\lambda}})$ is closed. Hence, the continuity of $\nabla f_i$ 
	and $\nabla h_m$ plus the closedness of $D_i(f) \cap D_m(h)$ imply 
	that $D_{\iota}$ is indeed closed.
		\ifdefined\submit
		\qed
		\else
		\qedhere
		\fi
\end{proof}

\begin{remark}
	\label{remark:lambda_full}
	When $\lambda = 1/L$, we cannot guarantee from 
	\cref{eq:V_Lyapunov} that $V(w^+)$ is strictly less than $V(w)$.
	But the result of \cref{thm:global_pdmc} can
	still be valid for $\lambda = 1/L$ if monotonicity of $V$ is ensured by 
	some other mechanisms.
	One such instance is
	when the 
	regularizer $g$ is convex, in which case $V$ is a Lyapunov
	function since the right-hand side of \cref{eq:V<=Q_L_g} is $L$-strongly
	convex for $\lambda = 1/L$.
\end{remark}

\begin{corollary}
	If \cref{assum:min_convex} (a), (b), (d), and (e) hold, and $g$ is a convex 
	function, then the 
	conclusions of \cref{thm:global_pdmc} hold for
	$\lambda = 1/L$.
	\label{cor:pdmc_g_convex}
\end{corollary}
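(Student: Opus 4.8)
The plan is to retrace the proof of \cref{thm:global_pdmc} and check that the only place where $\lambda < 1/L$ was used — namely the strict decrease \cref{eq:V_Lyapunov} — can be replaced by a non-strict (but still sufficient) decrease when $g$ is convex, and that all remaining structural facts go through verbatim. Since \cref{thm:global_acce} only needs (i) that $\Tpdmc$ is a union usc operator and (ii) that $V = f+g-h$ is a Lyapunov function for it, it suffices to verify these two items at $\lambda = 1/L$ under \cref{assum:min_convex} (a), (b), (d), (e) together with convexity of $g$.

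First I would note that the union usc structure is unaffected by the choice of $\lambda$: the decomposition $\Tpdmc(w) = \bigcup_{\iota: w\in D_\iota} T_\iota(w)$ with $T_\iota = \prox_{\lambda g_j}\circ(Id - \lambda\nabla f_i + \lambda\nabla h_m)$ and the closed sets $D_\iota$ from \cref{eq:T_ijl} is valid for every $\lambda \in (0,\bar\lambda)$, and since $g$ convex means $\rho \geq 0$, we have $\bar\lambda = +\infty$ by \cref{eq:lambda_bar}, so $\lambda = 1/L$ is admissible; moreover $\prox_{\lambda g_j}$ is (nonexpansive, hence) continuous by \cref{remark:consequences_assumption} (c), so each $T_\iota$ is continuous and each $D_\iota$ is closed exactly as argued in the proof of \cref{thm:global_pdmc}. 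Note that when $g$ is convex we do not even need \cref{assum:min_convex} (c) as a separate hypothesis beyond what convexity already supplies, so dropping it from the statement is consistent.

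Next I would establish that $V$ is a Lyapunov function at $\lambda = 1/L$. The inequality \cref{eq:f<=Qf}, $f(z) \leq Q_f^{\lambda}(z,w)$, holds for all $\lambda \in (0, 1/L]$, so it is available at $\lambda = 1/L$; combining it with the linearization $h(z) \geq L_h(z,w)$ from convexity of $h$ (which follows from \cref{assum:min_convex} (a),(d), as recalled in \cref{remark:consequences_assumption} (d)) gives \cref{eq:V<=Q_L_g}, i.e. $V(z) \leq Q_f^{\lambda}(z,w) - L_h(z,w) + g(z)$ for all $z$. As indicated in \cref{remark:lambda_full}, the key observation is that when $g$ is convex and $\lambda = 1/L$, the function $z \mapsto Q_f^{\lambda}(z,w) - L_h(z,w) + g(z)$ is $L$-strongly convex (its smooth part $Q_f^\lambda(\cdot,w)$ contributes the quadratic $\frac{L}{2}\|\cdot - w\|^2$, the affine part $-L_h(\cdot,w)$ contributes nothing, and $g$ is convex), hence has a unique minimizer, which by \cref{eq:prox=argmin} is $w^+ = \prox_{\lambda g}(w - \lambda y + \lambda v)$. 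Strong convexity with modulus $L$ then yields the quadratic growth bound around the minimizer, and evaluating at $z = w$ (using $Q_f^\lambda(w,w) - L_h(w,w) + g(w) = V(w)$) gives
\[
V(w) - V(w^+) \;\geq\; \frac{L}{2}\|w^+ - w\|^2,
\]
which is strictly positive whenever $w^+ \neq w$, i.e. whenever $w \notin \Fix(\Tpdmc)$. Thus $V$ satisfies \cref{eq:Lyapunovcondition}, equality forces $w \in \Fix(\Tpdmc)$, and $\inf V > -\infty$ follows from coerciveness of $f+g-h$ (\cref{remark:consequences_assumption} (e)) together with continuity; so $V$ is a Lyapunov function and $\{w^k\}$ is bounded by coercivity. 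Applying \cref{thm:global_acce} completes the argument.

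The only real subtlety — and the step I would be most careful about — is the strong convexity bookkeeping at the boundary case $\lambda = 1/L$: one must check that the "$1/L$ worth" of quadratic curvature supplied by $Q_f^{1/L}(\cdot,w)$ is genuinely available (not already consumed in deriving \cref{eq:f<=Qf}) so that it can be paired with the convexity of $g$ to get a strongly convex subproblem with a clean $\frac{L}{2}\|w^+-w\|^2$ descent. This is exactly the slack that is lost when $g$ is merely weakly convex, which is why the corollary needs $g$ convex; everything else is a direct transcription of the proof of \cref{thm:global_pdmc}.
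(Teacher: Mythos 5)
Your proof is correct and follows the same route the paper takes: the corollary is justified in the paper by \cref{remark:lambda_full}, which observes precisely that for $\lambda = 1/L$ with $g$ convex the right-hand side of \cref{eq:V<=Q_L_g} is $L$-strongly convex, so its minimizer $w^+$ satisfies $V(w)-V(w^+)\geq \tfrac{L}{2}\|w^+-w\|^2$ and $V$ remains a Lyapunov function. Your additional checks (that the union usc decomposition is $\lambda$-independent and that $\bar\lambda=+\infty$ when $\rho\geq 0$) are exactly the bookkeeping the paper leaves implicit.
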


As special cases, global subsequential convergence of the
forward-backward algorithm and projected subgradient algorithm follows
directly from \cref{thm:global_pdmc} and \cref{cor:pdmc_g_convex}.

\subsection{Global convergence to fixed points}
\label{sec:global}
We now take advantage of \cref{thm:global_fullsequence} to prove global convergence of the \emph{full sequence} generated by \cref{eq:pdmc} to a fixed point.

\begin{theorem}\label{thm:global_pdmc_full}
	Let $\{w^k\}$ be any sequence generated by \cref{eq:pdmc} with $\lambda 
	\in 
	(0,1/L)$ and suppose that \cref{assum:min_convex} holds so that an accumulation point $w^* \in \Fix(\Tpdmc)$ exists. If
	$Id-\lambda \nabla f_i$ is nonexpansive over $\E$ for all $i\in
	I$ and $\Tpdmc$ is single-valued at $w^*$, then $\{w^k\}$ converges to $w^*$ under either one of the following conditions.
	\begin{enumerate}[(a)]
		\item $\nabla h_m$ is nonexpansive for all 
		$m \in M$ and $g_j$ is $\rho$-convex with $\rho\geq 1$ for all $j\in J$; or
		\item $h\equiv 0$ and $g_j$ is $\rho$-convex with $\rho\geq 0$ for all $j\in J$.
	\end{enumerate}
	Moreover, $Q$-linear convergence to $w^*$ is achieved if $\rho >
	1$ for (a) or $\rho > 0$ for (b). 
\end{theorem}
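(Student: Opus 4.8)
The plan is to reduce the claim to an application of \cref{thm:global_fullsequence} with $T=\Tpdmc$ and the given accumulation point $w^*$. From \cref{thm:global_pdmc} and its proof we already know that, under \cref{assum:min_convex} and for $\lambda\in(0,1/L)$, the operator $\Tpdmc$ is union usc with individual operators $T_{\iota}=\prox_{\lambda g_j}\circ(Id-\lambda\nabla f_i+\lambda\nabla h_m)$ for $\iota=(i,j,m)\in I\times J\times M$, that $V=f+g-h$ is a Lyapunov function for it, and that every sequence generated by \cref{eq:pdmc} is bounded with all accumulation points in $\Fix(\Tpdmc)$; in particular the hypothesized $w^*$ exists. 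Single-valuedness of $\Tpdmc$ at $w^*$ is assumed, so the only remaining hypothesis of \cref{thm:global_fullsequence} to verify is that each individual operator $T_{\iota}$ with $\iota\in\phi_{\D}(w^*)$ is calm at $w^*$ with a constant $\kappa_{\iota}\in[0,1]$.

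I will in fact show that each $T_{\iota}$, viewed as a single-valued map on all of $\E$, is globally Lipschitz with an explicit constant, which yields calmness at $w^*$ \emph{a fortiori}. Since $\rho\ge 0$ in both cases, $\bar\lambda=+\infty$, so $\lambda\in(0,1/L)$ lies in the range where $\prox_{\lambda g_j}$ is single-valued and $(1+\rho\lambda)^{-1}$-Lipschitz (\cref{remark:consequences_assumption}(c)). For the inner map, in case (a) it is the sum of the nonexpansive map $Id-\lambda\nabla f_i$ (by hypothesis) and $\lambda\nabla h_m$, the latter being $\lambda$-Lipschitz because $\nabla h_m$ is nonexpansive; by the triangle inequality the inner map is $(1+\lambda)$-Lipschitz. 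In case (b), $h\equiv 0$ and the inner map is just the nonexpansive $Id-\lambda\nabla f_i$. Composing, $T_{\iota}$ is $\kappa_{\iota}$-Lipschitz with $\kappa_{\iota}=(1+\lambda)/(1+\rho\lambda)$ in case (a) and $\kappa_{\iota}=1/(1+\rho\lambda)$ in case (b); the hypothesis $\rho\ge 1$ in (a) (resp.\ $\rho\ge 0$ in (b)) ensures $\kappa_{\iota}\le 1$. Applying \cref{thm:global_fullsequence} then yields $w^k\to w^*$, together with the component identification $\phi_{\D}(w^k)\subset\phi_{\D}(w^*)$ for all large $k$.

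For the rate statement, observe that strengthening $\rho\ge 1$ to $\rho>1$ in case (a) makes $1+\lambda<1+\rho\lambda$, hence $\kappa_{\iota}<1$, and similarly $\rho>0$ in case (b) forces $\kappa_{\iota}=1/(1+\rho\lambda)<1$; since these bounds are uniform over all $\iota\in\phi_{\D}(w^*)$, the final conclusion of \cref{thm:global_fullsequence} delivers local $Q$-linear convergence. The argument is essentially bookkeeping on top of \cref{thm:global_pdmc,thm:global_fullsequence}; the one place to be careful is matching the composite Lipschitz constant to the admissible range $[0,1]$, where the crude $(1+\lambda)$ bound on the inner sum in case (a) is exactly absorbed by the $(1+\rho\lambda)^{-1}$ contraction of $\prox_{\lambda g_j}$ precisely when $\rho\ge 1$ --- this is why the two cases require different lower bounds on $\rho$. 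I do not anticipate any genuine obstacle beyond this verification.
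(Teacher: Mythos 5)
Your proposal is correct and follows essentially the same route as the paper: both reduce the claim to \cref{thm:global_fullsequence} via \cref{thm:global_pdmc} and verify calmness of the individual operators $T_{\iota}=\prox_{\lambda g_j}\circ(Id-\lambda\nabla f_i+\lambda\nabla h_m)$ by combining the $(1+\rho\lambda)^{-1}$-Lipschitz continuity of $\prox_{\lambda g_j}$ with the nonexpansiveness hypotheses on the inner map. You merely make explicit the composite constants $(1+\lambda)/(1+\rho\lambda)$ and $1/(1+\rho\lambda)$ that the paper leaves implicit, and the case analysis on $\rho$ for nonexpansiveness versus contraction matches the paper's exactly.
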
 
\begin{proof}
	By \cref{thm:global_pdmc} and \cref{thm:global_fullsequence}, it suffices 
	to show that $T_{\iota}$ given in \cref{eq:T_ijl} is calm at $w^*$ with 
	parameter $\kappa_{\iota}\in [0,1]$. To prove part (a), we have from 
	\cref{remark:consequences_assumption} (c) and the assumptions on 
	$Id-\lambda \nabla f_i$ and $\nabla h_m$ that $T_{\iota}$ is in fact 
	nonexpansive if $\rho\geq 1$, and a contraction if $\rho>1$. Thus, the claim follows. The proof for (b) is similar. The individual operators given by \cref{eq:T_ijl} reduces to $T_{\iota}=\prox_{\lambda g_j}(Id-\lambda \nabla f_i)$, which  is nonexpansive when $\rho \geq 0$, and a contraction when $\rho >0$.
		\ifdefined\submit
		\qed
		\else
		\qedhere
		\fi
\end{proof}

\begin{remark} \label{remark:specialcases} \text{}~
	\label{remark:fconvex_nonexpansive}
\begin{enumerate}[(a)]
	\item 	When $f_i$ is a convex function, it is well-known that $Id - \lambda
		\nabla f_i$ is nonexpansive, so \cref{thm:global_pdmc_full} is
		applicable for min-convex functions $f$.
	\item \cref{thm:global_pdmc_full} (b) provides sufficient
		conditions for global convergence of the forward-backward
		algorithm \cref{eq:fb}. When specialized to the case of $g_j = \delta_{R_j}$ with a closed convex set $R_j$, we obtain global convergence of the projected subgradient algorithm \cref{eq:PS}. Applying further \cref{lemma:invariantball} to the collection $\D = \{ R_j:j\in J\}$ and noting the convergence of $\{w^k\}$ to $w^*$, we see that there exists $N>0$ such that $\{w^k\}_{k=N}^{\infty} \subset 
	\bigcup _{j:w^*\in R_j} R_j$. 
\end{enumerate}
\end{remark}

The property of the projected subgradient algorithm noted in \cref{remark:specialcases} (b) has
practical consequences in the same spirit as the component 
identification result described in \cref{remark:component}. In particular, 
the locations of the iterates 
can be used to identify which $R_j$'s contain 
the convergence point $w^*$. In turn, using an identified $R_j$, we 
may reduce 
\cref{eq:min_over_UCS} to a 
convex-constrained problem, which is potentially easier to solve than the 
original one.

Another important consequence of \cref{remark:specialcases} is that if
$Id-\lambda \nabla f_i$ is a contraction when restricted to $R_j$, for
any $(i,j) \in I\times J$, we can further attain a local $Q$-linear rate of convergence.
This will be useful when we analyze sparse affine feasibility and
linear complementarity problems in \cref{sec:affinefeasibility}.

\begin{proposition}[Linear convergence]
	\label{prop:linear_ps}
	Consider the setting of \cref{thm:global_pdmc_full} (b) with $g_j = \delta_{R_j}$ where $R_j$ is a closed convex set for all $j\in J$. 
	If for some $\lambda \in (0,1/L)$, $Id - \lambda \nabla 
	f_i$ is $\kappa_{ij}$-Lipschitz continuous on $R_j$  with $\kappa_{ij} \in 
	[0,1)$ for all $(i,j)\in I\times J$, $\{w^k\}$ converges to some
		point in $\Fix (\Tps (w^k))$ with a local $Q$-linear rate. 
\end{proposition}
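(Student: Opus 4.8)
The plan is to build on the full‑sequence convergence already provided by \cref{thm:global_pdmc_full} (b) and to upgrade it to a linear rate by exploiting the local component‑identification behavior of the projected subgradient iterates. Since $h\equiv 0$ and $g_j=\delta_{R_j}$, the operator $\Tpdmc$ coincides with $\Tps$, and the individual operators of \cref{eq:T_ijl} simplify to $T_\iota = P_{R_j}\circ(Id-\lambda\nabla f_i)$ for $\iota=(i,j)\in I\times J$. By \cref{thm:global_pdmc_full} (b) the whole sequence $\{w^k\}$ converges to an accumulation point $w^*\in\Fix(\Tps)$ at which $\Tps$ is single-valued; combined with $w^*\in\Fix(\Tps)$, this forces $T_\iota(w^*)=w^*$ for every $\iota=(i,j)\in\phi_{\D}(w^*)$, hence $w^*=P_{R_j}(w^*-\lambda\nabla f_i(w^*))$ and in particular $w^*\in R_j$ for all such $\iota$. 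Moreover, \cref{thm:global_fullsequence} gives $\phi_{\D}(w^k)\subset\phi_{\D}(w^*)$ for all large $k$, so whenever $w^{k+1}\in T_{\iota_k}(w^k)$ with $\iota_k=(i_k,j_k)$, we have $\iota_k\in\phi_{\D}(w^*)$ once $k$ is large.

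The second ingredient is the localization of the iterates among the pieces $R_j$. As noted in \cref{remark:specialcases} (b), applying \cref{lemma:invariantball} to the finite collection $\{R_j:j\in J\}$ and using $w^k\to w^*$ yields an index $N$ such that for every $k\ge N$ there is $j(k)\in J$ with $w^k\in R_{j(k)}$ and $w^*\in R_{j(k)}$; that is, $w^k$ and $w^*$ lie in a common closed convex set $R_{j(k)}$. This is precisely what lets the Lipschitz hypothesis on $R_{j(k)}$ be invoked for the pair $(w^k,w^*)$.

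With these in hand I would chain the estimates. Put $\kappa\coloneqq\max_{(i,j)\in I\times J}\kappa_{ij}$, so that $\kappa<1$ since $I\times J$ is finite and each $\kappa_{ij}<1$. For $k\ge N$ large enough that $\iota_k\in\phi_{\D}(w^*)$, write $w^{k+1}=P_{R_{j_k}}(w^k-\lambda\nabla f_{i_k}(w^k))$ and $w^*=P_{R_{j_k}}(w^*-\lambda\nabla f_{i_k}(w^*))$; then the nonexpansiveness of $P_{R_{j_k}}$ and the $\kappa_{i_k,j(k)}$-Lipschitz continuity of $Id-\lambda\nabla f_{i_k}$ on $R_{j(k)}$ (which contains both $w^k$ and $w^*$) give
\[
\|w^{k+1}-w^*\|\le\bigl\|(Id-\lambda\nabla f_{i_k})(w^k)-(Id-\lambda\nabla f_{i_k})(w^*)\bigr\|\le\kappa_{i_k,j(k)}\,\|w^k-w^*\|\le\kappa\,\|w^k-w^*\|,
\]
which is the claimed local $Q$-linear rate. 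The one delicate point is the localization step: the Lipschitz bound is assumed only on each $R_j$, so one must argue that $w^k$ itself — not merely $w^{k+1}$, which automatically lies in the set it was projected onto — eventually belongs to a piece that also contains $w^*$; it is likewise essential that the hypothesis is imposed for every pair $(i,j)$, because the active gradient index $i_k$ in the update need not be compatible a priori with the membership index $j(k)$, and the projection index $j_k$ need not equal $j(k)$.
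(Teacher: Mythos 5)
Your proposal is correct and follows essentially the same route as the paper's proof: full-sequence convergence from \cref{thm:global_pdmc_full} (b), component identification via \cref{thm:global_fullsequence} and \cref{remark:specialcases} (b) to place $w^k$ and $w^*$ in a common piece $R_{j}$, then nonexpansiveness of $P_{R_{j_k}}$ combined with the Lipschitz hypothesis to obtain the contraction factor. The only cosmetic difference is that you take $\kappa=\max_{(i,j)\in I\times J}\kappa_{ij}$ while the paper uses the slightly sharper $\kappa=\max\{\kappa_{ij}:w^*\in R_j\}$; both are strictly less than $1$ by finiteness, so the argument is unaffected.
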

\begin{proof}
	We already have that $w^k \to w^*$ by \cref{thm:global_pdmc_full}. 
	From \cref{thm:global_fullsequence}, we know that there exists $N \geq 0$ 
	such that for 
	each $k\geq N$, we can find $\iota = (i,j)\in I\times J$ (dependent on $k$) 
	such that $w^{k+1} = T_{\iota }(w^k)$ and $w^* = T_{\iota}(w^*)$, where 
	$T_{\iota} = P_{R_j} \circ  (Id-\lambda \nabla f_i)$. Then
	\begin{eqnarray}
	\|w^{k+1} - w^* \| & = & \| (P_{R_j}\circ (Id-\lambda \nabla 
	f_i) )(w^k)
	-  (P_{R_j}\circ (Id-\lambda \nabla 
	f_i) )(w^*) \| \nonumber  \\
	& \leq & \| (Id-\lambda \nabla f_i)(w^k - w^*) \| .\nonumber  	
	\end{eqnarray} 
	Taking a larger $N$, if necessary, we have from 
\cref{remark:specialcases} (b) that there exists $j^*\in J$ such that
$w^k, w^* \in R_{j^*}$. By 
	hypothesis, we then obtain from the above inequality that $\|w^{k+1}-w^*\| 
	\leq 
	\kappa_{ij^*} \|w^k-w^*\| \leq \kappa \|w^k - w^*\|$, where $\kappa = \max 
	\{ \kappa_{ij}:w^*\in R_j\}$.
		\ifdefined\submit
		\qed
		\else
		\qedhere
		\fi
\end{proof}

\subsection{Illustrative examples: Applications to union-convex-feasibility problems}
\label{sec:feasibility}
We revisit the feasibility problem \cref{eq:feasibilityproblem} to
demonstrate some applications of the framework studied in the previous
section. In particular, we consider \cref{eq:feasibilityproblem} with
$S_1\cap S_2 \neq \emptyset$ and
$S_1$ and $S_2$ being union convex sets, say 
\begin{equation}
S_1 = \bigcup _{i \in I} R_i^{(1)}, S_2 = \bigcup 
_{j \in J} R_j^{(2)}, |I|,|J| < \infty,\;
R_k^{(l)} \text{ is convex for all } k,l.
\label{eq:S1S2_UCS}
\end{equation}
We establish global convergence of the methods of averaged 
projections and alternating projections.

\subsubsection{Method of averaged projections}
\label{sec:mavep}
\cref{eq:feasibilityproblem} can be 
reformulated as an optimization problem:
\begin{equation}
\min _{w\in\E}\, \frac{1}{2}\dist(w,S_1)^2 + \frac{1}{2}\dist(w,S_2)^2,
\label{eq:min_averagedprojections}
\end{equation}
and each term is a min-convex function if \cref{eq:S1S2_UCS} holds.
Indeed, 
\begin{equation}
f(w)\coloneqq \frac{1}{2}\dist (w,S_1)^2 = \min _{i\in I}\,
\left\lbrace f_i (w) \coloneqq \frac{1}{2} \dist \left(w,R_i^{(1)}\right)^2
\right\rbrace.
\label{eq:f_feasibility}
\end{equation}
By the convexity of $R_i^{(1)}$, 
$f_i$ is a convex function whose gradient, namely $\nabla f_i(w) = w - 
P_{R_i^{(1)}}(w)$, is 1-Lipschitz 
continuous, see \cite[Example 
5.5]{Beck17}. On the other hand, we also have 
\begin{equation}
\frac{1}{2}\dist(w,S_2)^2 = \frac{1}{2}\|w\|^2 - \left( 
\frac{1}{2}\|w\|^2 - 
\frac{1}{2} \dist (w,S_2)^2\right) \eqqcolon g(w) - h(w).
\label{eq:dist2_dc}
\end{equation}
Note that $h$ can also be expressed as 
\[h(w)= \max _{j\in J}\,\left\lbrace h_j(w) \coloneqq \frac{1}{2}\|w\|^2 - 
\frac{1}{2}\dist \left(w,R_j^{(2)}\right)^2\right\rbrace,\]
and is therefore convex.\footnote{Each component is the convex
	conjugate of $\norm{w}^2/2 + \delta_{R_j^{(2)}}(w)$, and the maximum
	of convex functions is convex.}
By \cref{eq:S1S2_UCS}, $g$ and $h$ 
satisfy \cref{assum:min_convex} (a), (c), and (d). Hence, we may use 
\cref{eq:pdmc} to solve 
\cref{eq:min_averagedprojections}. In this setting,
\cref{eq:prime} becomes
\begin{equation}
f'(w) 
= w-P_{S_1}(w), \quad h'(w) = P_{S_2}(w).
\label{eq:fprime_feasibility}
\end{equation}
Since $\prox_{\lambda g} (w) = \frac{1}{1+\lambda}w$, \cref{eq:pdmc}
simplifies to 
\begin{equation}
\Tmavep^{\lambda} (w^k) \coloneqq \left( 
\frac{1-\lambda}{1+ 
	\lambda }Id + \frac{\lambda}{1+\lambda}(P_{S_1} + P_{S_2})\right) 
(w^k), 
\quad \lambda \in (0,1].
\label{eq:mave_relaxed}
\end{equation}
When $\lambda = 1$, we denote $\Tmavep \coloneqq \Tmavep^1$ and the 
above algorithm further simplifies to the method of averaged projections \cref{eq:Mavep}.
Global convergence of \cref{eq:mave_relaxed} for all $\lambda \in (0,1]$ (that is, including $\lambda=1$) holds under the assumptions of \cref{thm:global_pdmc_full} (a) and \cref{cor:pdmc_g_convex}.


\subsubsection{Method of alternating relaxed projections}
\label{sec:marp}
Another projection algorithm for solving
\cref{eq:feasibilityproblem} can be obtained by applying directly the 
FB algorithm \cref{eq:fb} to \cref{eq:min_averagedprojections}. Let $f$ be 
given by \cref{eq:f_feasibility}, $g \coloneqq \dist (\cdot, 
S_2)^2/2$, and $g_j\coloneqq \dist 
(\cdot,R_j^{(2)})^2/2$. From Example~6.65 of \cite{Beck17}, we have 
\begin{equation*}
\prox_{\lambda g_j}(w) = 
\frac{\lambda}{1+\lambda}P_{R_j^{(2)}} 
(w) + 
\frac{1}{1+\lambda}w.
\label{eq:z_j}
\end{equation*}
Using the optimality condition of \cref{eq:moreau} and 
\cref{lemma:moreau_prox}, the above formula for the proximal 
mapping of $g_j$ extends to that of $g$:
\begin{equation*}
\prox_{\lambda g}(w) = \frac{\lambda}{1+\lambda}P_{S_2} (w) + 
\frac{1}{1+\lambda}w .
\label{eq:prox_dist2}
\end{equation*}
%
Thus, \cref{eq:fb} becomes
\begin{align}
\nonumber
\Tmarp^{\lambda}(w^k)
\coloneqq 
\frac{1}{1+\lambda}\left(\lambda P_{S_2} \left((1-\lambda)w^k+\lambda 
P_{S_1}(w^k)\right)  + 
((1-\lambda)w^k+\lambda P_{S_1}(w^k))\right),
\label{eq:marp}
\end{align}
recovering a special instance of the \emph{method of alternating relaxed 
	projections (MARP)} studied in 
\cite{BPW13}. Its global convergence to fixed points immediately follows from \cref{thm:global_pdmc_full} (b) for stepsizes $\lambda \in (0,1)$.

\subsubsection{Method of alternating projections}\label{sec:map}
An equivalent reformulation of \cref{eq:feasibilityproblem} is
\begin{equation}
\min _{w\in \E} ~ f(w) + \delta_{S_2} (w) ,
\label{eq:min_alternatingprojections}
\end{equation}
where $f$ is defined by \cref{eq:f_feasibility}. By 
\cref{eq:fprime_feasibility}, \cref{eq:PS} then takes the form
\begin{equation}
w^{k+1} \in \Tmap^{\lambda} (w^k)\coloneqq P_{S_2} ((1-\lambda)w^k + 
\lambda P_{S_1}(w^k))
\label{eq:map_relaxed}
\end{equation}
with $\lambda \in (0,1]$.
When $\lambda = 1$, we denote $\Tmap 
\coloneqq \Tmap^1$ and the algorithm simplifies to \cref{eq:Map}.
%
%
%
Global convergence to fixed points again follows from \cref{thm:global_pdmc_full} (b), but only for stepsizes $\lambda \in (0,1)$
because $V \coloneqq f + \delta_{S_2}$ may not be a 
Lyapunov function when $\lambda = 1$ (see \cref{eq:V_Lyapunov}). Hence, we 
cannot guarantee the global (subsequential) convergence of the MAP
scheme. This in fact is a major theoretical open problem 
for MAP.  In the 
literature, particularly for nonconvex feasibility problems, it is 
often the case that global convergence results are obtained for some 
relaxations of MAP with $\lambda < 1$ in \cref{eq:map_relaxed} only \citep{ACT21,ABRS10,HA13a,BPW13}.
In \cref{sec:ps_lcp_pmatrix}, we overcome this challenge to show that \cref{eq:Map} attains global convergence for
a union-convex-feasibility reformulation of LCP problems.

\subsection{Fixed point sets and critical points}
\label{sec:fixed_critical}
Having established the convergence of \cref{eq:pdmc} to fixed 
points, we now show its importance in view of the optimization problem \cref{eq:minconvex_optimization}. In particular, we show that being a 
fixed point is a necessary condition for optimality. 

\begin{theorem}
	\label{thm:local_is_fixed}
	Let $w$ be a local minimum of \cref{eq:minconvex_optimization}. If
	\cref{assum:min_convex} holds, then
	\begin{enumerate}[(a)]
		\item $w$ is a local minimum of $f_i+g_j-h_m$ for all $(i,j,m)\in 
		I\times J\times M$ such that $w\in D_i(f) \cap D_j(g) \cap D_m(h)$;
		\item there exists $\varepsilon\in 
		(0,1/L)$, 
		dependent on 
		$w$, such that $w\in \Fix (T_{\rm 
			PDMC}^{\lambda})$ for any $\lambda \in (0,\min\{\bar{\lambda}
		,\varepsilon\}]$; and
		\item if $w$ is a global 
		minimum, then $\Tpdmc$ is single-valued at $w$ and 
		$w\in 
		\Fix (\Tpdmc)$, for all $\lambda \in 
		(0,\min\{\bar{\lambda},1/L\})$.
	\end{enumerate} 
\end{theorem}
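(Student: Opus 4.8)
The plan is to prove the three parts in order, with part (a) being the engine that drives (b) and (c). For part (a), I would use the fact that near a local minimum $w$ of $V = f + g - h$, the piecewise functions simplify: since $D_i(f)$, $D_j(g)$, $D_m(h)$ are closed (\cref{remark:consequences_assumption}~(b)) and $f = \min_i f_i$, $g = \min_j g_j$, $h = \max_m h_m$, for any $(i,j,m)$ with $w\in D_i(f)\cap D_j(g)\cap D_m(h)$ we have, in a neighborhood of $w$, the pointwise inequalities $f \le f_i$, $g \le g_j$, $h \ge h_m$, all with equality at $w$. Hence $f_i + g_j - h_m \ge f + g - h = V$ locally with equality at $w$, so $w$ is a local minimum of $f_i + g_j - h_m$ as well. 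This is the key observation and the only mildly delicate point is checking that the three ``min/max active'' inequalities persist on a common neighborhood, which follows from closedness of the complementary pieces.

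For part (b), I would invoke the standard sufficient optimality / fixed-point characterization for a single smooth-plus-convex-minus-smooth triple. Fix $(i,j,m)$ active at $w$. By part (a), $w$ minimizes $f_i + g_j - h_m$ locally, so $0 \in \nabla f_i(w) + \partial g_j(w) - \nabla h_m(w)$, i.e.\ $\nabla h_m(w) - \nabla f_i(w) \in \partial g_j(w)$. Since $g_j$ is $\rho$-convex, for $\lambda \in (0,\bar\lambda)$ the map $\prox_{\lambda g_j}$ is single-valued and the subgradient inclusion is equivalent to $w = \prox_{\lambda g_j}(w - \lambda \nabla f_i(w) + \lambda \nabla h_m(w))$, i.e.\ $w$ is a fixed point of the individual operator $T_\iota$ from \cref{eq:T_ijl}. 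To promote this to $w \in \Fix(\Tpdmc)$, I need $\iota \in \phi_\D(w)$, i.e.\ $w - \lambda \nabla f_i(w) + \lambda \nabla h_m(w) \in D_j(M_g^\lambda)$; this is where a small enough $\lambda$ is needed, and I expect this to be the main obstacle: one must argue, using the local minimality of $f_i + g_j - h_m$ at $w$ and \cref{lemma:moreau_prox}, that $M_g^\lambda$ agrees with $M_{g_j}^\lambda$ at the perturbed point $w - \lambda f'(w) + \lambda h'(w)$ for all sufficiently small $\lambda$. Intuitively, as $\lambda \to 0$ the perturbed point tends to $w$, and because $j$ is active for $g$ at $w$ and the $M_{g_k}^\lambda$ vary continuously, $j$ remains active for $M_g^\lambda$; making this uniform over the (finitely many) active triples gives a single $\varepsilon \in (0,1/L)$ with $\varepsilon \le \bar\lambda$ working for all of them. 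Then $T_\iota(w) = w \subset \Tpdmc(w)$ gives $w \in \Fix(\Tpdmc)$.

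For part (c), I would use that a global minimum is in particular a local minimum, so (a) and (b) apply, but now I must handle all $\lambda \in (0,\min\{\bar\lambda,1/L\})$, not just small ones, and additionally get single-valuedness. The point is that for a global minimum, $V(w) = \min V$, and by the descent inequality \cref{eq:V_Lyapunov} any $w^+ \in \Tpdmc(w)$ satisfies $V(w) - V(w^+) \ge \frac{1-\lambda L}{2\lambda}\|w^+ - w\|^2 \ge 0$; since $V(w^+) \ge V(w) = \min V$, this forces $w^+ = w$ whenever $\lambda \in (0,1/L)$. Thus $\Tpdmc(w) = \{w\}$, giving both single-valuedness and $w \in \Fix(\Tpdmc)$ for the whole stated range of $\lambda$. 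I would double-check that \cref{eq:V_Lyapunov} indeed holds for \emph{every} selection $w^+ \in \prox_{\lambda g}(w - \lambda y + \lambda v)$ over $y \in f'(w)$, $v \in h'(w)$ — which it does, by the argument in the proof of \cref{thm:global_pdmc} applied to each active triple — so no active branch can strictly decrease $V$ below its global minimum, and hence every branch returns $w$.
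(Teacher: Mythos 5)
Parts (a) and (c) of your proposal are correct and essentially the paper's own argument: (a) rests on the global inequalities $f\le f_i$, $g\le g_j$, $h\ge h_m$ (these hold everywhere by the definition of pointwise min/max, so no appeal to closedness of the pieces is actually needed), and (c) is exactly the paper's one-line deduction from \cref{eq:V_Lyapunov}, which indeed holds for every selection $w^+\in\Tpdmc(w)$. In the first half of (b) you take a slightly different route from the paper: you pass through Fermat's rule and the inclusion $\nabla h_m(w)-\nabla f_i(w)\in\partial g_j(w)$, whereas the paper shows that $w$ is a local (hence, by strong convexity, global) minimizer of the quadratic majorant $Q^\lambda_{i,j,m}$ and reads off $w=\prox_{\lambda g_j}(w-\lambda\nabla f_i(w)+\lambda\nabla h_m(w))$ from \cref{eq:prox=argmin}. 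Both are valid; the paper's version avoids the sum-rule bookkeeping for the limiting subdifferential of a weakly convex $g_j$, while yours gives the individual fixed-point identity for all $\lambda\in(0,\bar\lambda)$ rather than only $\lambda\le 1/L$.

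The genuine gap is in the step you yourself flag as the main obstacle: showing that the perturbed point $u_\lambda\coloneqq w-\lambda\nabla f_i(w)+\lambda\nabla h_m(w)$ lies in $D_j(M_g^\lambda)$, so that $T_{(i,j,m)}$ is actually one of the branches defining $\Tpdmc(w)$ via \cref{eq:T_ijl}. Your justification --- ``$j$ is active for $g$ at $w$ and the $M_{g_k}^\lambda$ vary continuously, so $j$ remains active for $M_g^\lambda$'' --- is false for a \emph{pre-selected} $j$ when several pieces of $g$ are tied at $w$. For instance, with $g_1=\delta_{R_1}$, $g_2=\delta_{R_2}$ and $w\in R_1\cap R_2$, if $-\nabla f_i(w)+\nabla h_m(w)$ points into $R_2$ but out of $R_1$, then $M_g^\lambda(u_\lambda)=M_{g_2}^\lambda(u_\lambda)=0<M_{g_1}^\lambda(u_\lambda)$ for every $\lambda>0$, so $j=1$ never becomes active at $u_\lambda$; continuity cannot break this tie. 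What is true, and what the theorem needs, is only that \emph{some} index active at $w$ works, and this requires an argument: the paper takes an actual prox point $z_\lambda\in\prox_{\lambda g}(u_\lambda)$, uses the descent estimate \cref{eq:Vw_minus_Vwstar} to force $\|z_\lambda-w\|\to 0$ as $\lambda\downarrow 0$, applies \cref{lemma:invariantball} to $\{D_j(g)\}_{j\in J}$ to conclude that any index $j'$ with $g(z_\lambda)=g_{j'}(z_\lambda)$ also satisfies $w\in D_{j'}(g)$, and then verifies $M_g^\lambda(u_\lambda)=M_{g_{j'}}^\lambda(u_\lambda)$ by sandwiching via \cref{lemma:moreau_prox}. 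Your proof needs this (or an equivalent mechanism, e.g.\ the epi-convergence $M_{g_k}^\lambda(u_\lambda)\to g_k(w)$ to eliminate the inactive indices, followed by running your first-order argument for \emph{whichever} active index attains the minimum) to be complete.
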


\begin{proof}
	Let $w$ be a local minimum of $f+g-h$, and let $(i,j,m)\in I\times J\times 
	M$ be such that $w \in D_i(f) \cap D_j(g) 
	\cap D_m(h)$. Then there exists 
	$\delta >0$ such that 
	\begin{equation}
		(f_i+g_j-h_m)(w) = (f+g-h)(w) \leq (f+g-h)(z) \leq (f_i+g_j-h_m)(z),
			\quad \forall z\in 
		B(w,\delta),
		\label{eq:localmin}
	\end{equation}
	where the last inequality follows from the definition of $f$, $g$ and $h$. 
	That is, $w$ is a local minimum of $f_i+g_j-h_m$, which proves 
	\cref{thm:local_is_fixed} (a).

	Meanwhile, consider any 
$\lambda \in (0,\min\{\bar{\lambda}) ,1/L\}]$. Computation similar 
	to that for obtaining \cref{eq:V<=Q_L_g} gives
	\ifdefined\submit 
	\begin{align}
		\nonumber
	& (f_i+g_j-h_m)(z)\\
		\nonumber
	 \leq&~
	f_i(w) + \lla \nabla f_i(w), z-w\rla + 
	\frac{1}{2\lambda}\|z-w\|^2 - h_m(w) - \lla \nabla 
	h_m(w),z-w\rla 
	+ 
	g_j(z) \\
	\eqqcolon&~ Q^\lambda_{i,j,m}(z),
	\label{eq:tobound}
	\end{align}
	
	\else 
	\begin{equation}
		\begin{aligned}
	(f_i+g_j-h_m)(z) & \leq
	f_i(w) + \lla \nabla f_i(w), z-w\rla + 
	\frac{1}{2\lambda}\|z-w\|^2 - h_m(w) - \lla \nabla 
	h_m(w),z-w\rla 
	+ 
	g_j(z) \\
	& \eqqcolon Q^\lambda_{i,j,m}(z),
\end{aligned}
\label{eq:tobound}
	\end{equation}
	\fi 
	\noindent where the inequality holds since $\lambda \geq L \geq L_i$.
	\cref{eq:localmin,eq:tobound} then lead to
	\begin{equation*}
	(f_i+g_j-h_m)(w) \leq \min _{z\in B(w,\delta)} 
	Q_{i,j,m}^{\lambda} 
	(z)
	\leq Q_{i,j,m}^{\lambda} 
	(w)
	= (f_i+g_j-h_m)(w).
	\end{equation*}
	That is, $w$ is a local minimum of $Q_{i,j,m}^{\lambda}$.
	 Since $\lambda\in (0,\bar{\lambda})$,
	$Q_{i,j,m}^{\lambda} 
	(z)$ is a strongly convex function in $z$ (see also 
	\cref{remark:consequences_assumption} (c)), and is therefore
	globally and uniquely minimized at $z=w$. Hence, we conclude that (see also 
	\cref{eq:prox=argmin})
	\begin{equation*}
	w = \prox_{\lambda g_j}(w - \lambda \nabla f_i(w) + \lambda 
	\nabla h_m(w)), \quad \forall \lambda \in (0, \min\{\bar \lambda, 1 /
	L\}],
	\end{equation*}
	for all $(i,j,m)\in I\times 
	J\times M$ such that  $w\in D_i(f) \cap D_j(g) \cap D_m(h)$.
	By 
	\cref{eq:T_ijl}, in order to prove part (b) of the theorem, it suffices to show that there exists 
	$\varepsilon>0$ 
	and some $ (i,j,m)\in I\times 
	J\times M$ such that $w\in D_i(f)\cap D_j(g) \cap D_m(h)$ and 
	\begin{equation}
		\begin{gathered}
	M_g^{\lambda} (w-\lambda \nabla f_i(w) + \lambda \nabla h_m(w)) = 
	M_{g_j}^{\lambda} (w-\lambda \nabla f_i(w) + \lambda \nabla 
	h_m(w)), \\
\quad \forall \lambda \in (0,\min\{\bar{\lambda},\varepsilon\}].
\end{gathered}
	\label{eq:Mg=Mgj}
	\end{equation}
	Let
	$(i,m)\in I\times M$ be any index such that $w\in 
	D_i(f)\cap D_m(h)$. If
	\begin{equation}
	z_{\lambda} \in 
	\prox_{\lambda g} (w - \lambda \nabla f_i(w) + \lambda \nabla 
	h_m(w)), \quad \lambda \in (0,\bar{\lambda}),
	\label{eq:z_lambda}		
	\end{equation}
	we have from \cref{eq:V_Lyapunov} that
	\begin{equation}
	\label{eq:Vw_minus_Vwstar}
	V(w) - \min_{z}\, V(z) \geq V(w) - V(z_{\lambda}) \geq \frac{1-\lambda 
		L}{2\lambda}\|w-z_{\lambda}\|^2.
	\end{equation}
	Taking $\D \coloneqq \{D_j(g): j\in J\}$ 
	and defining 
	$\phi_{\D}$ as in \cref{eq:indexfunction}, we 
	know from 
	\cref{lemma:invariantball} that there exists $\eta>0$ such that 
	$\phi_{\D}(z) \subset \phi_{\D} (w)$ for all $z\in 
	B(w,\eta) \cap \dom (g)$. Using \cref{eq:Vw_minus_Vwstar}, we can 
	find 
	$\varepsilon>0$ small enough so that $\|w-z_{\lambda}\| < \eta$ for 
	all $\lambda \in (0,\varepsilon]$. Now, fix $\lambda \in 
(0,\min\{\bar{\lambda},\varepsilon\}]$ 
	and let $j\in \phi_{\D}(z_{\lambda})$. Then $g(z_{\lambda}) = 
	g_j(z_{\lambda})$ and we have
	\begin{eqnarray*}
		&&
	M_g^{\lambda} (w - \lambda \nabla f_i(w) + \lambda \nabla
	h_m(w))\\
	& \overset{\cref{eq:z_lambda},\cref{eq:moreau}}{=} & 
	g_j(z_{\lambda}) + \frac{1}{2\lambda}\|z_{\lambda} - ( w - 
	\lambda 
	\nabla 
	f_i(w) + \lambda \nabla h_m(w)) \|^2 \nonumber \\
	& \overset{\cref{eq:moreau}}{\geq} & M_{g_j}^{\lambda} (w - \lambda \nabla 
	f_i(w) + \lambda 
	\nabla h_m(w)) \nonumber \\
	& \overset{\cref{lemma:moreau_prox} (a)}{\geq} & M_g^{\lambda} (w - \lambda \nabla f_i(w) + \lambda \nabla 
	h_m(w)). \nonumber 
	\end{eqnarray*}
	Since $z_{\lambda}\in \dom (g)$,
	$\phi_{\D}(z_{\lambda}) \subset \phi_{\D}(w)$ and thus
	$w\in D_j(g)$, proving \cref{eq:Mg=Mgj} and thus part (b).
	Finally, Part (c), follows immediately from
	\cref{eq:Vw_minus_Vwstar}.  
		\ifdefined\submit
		\qed
		\else
		\qedhere
		\fi
\end{proof}

One caveat of the local optimality condition given in 
\cref{thm:local_is_fixed} (b) is that 
a local 
minimum might not be a fixed point of $\Tpdmc$ when $\lambda 
\in (0,\bar{\lambda})$ but $\lambda 
\in (\varepsilon, 1/L]$ (see \cref{ex:local_fixed_critical}).
On the other hand, in search for 
global minima of 
\cref{eq:minconvex_optimization},  the 
above theorem provides an intuition that larger but permissible values of 
$\lambda$ 
must be chosen to avoid getting stuck at spurious local optima. 
Of course, from a 
numerical point of view, a larger stepsize is often also more desirable to 
obtain faster empirical convergence of the algorithms. 

A more standard necessary condition for optimality is \emph{criticality}.
We recall from \cite{WCP18} that $w$ is a \emph{critical point} of $f+g-h$ if 
\[0\in
\partial f(w) + \partial g(w) - \partial h(w).\]
Indeed, by \cref{assum:min_convex} (b) and (d), $f$ and $h$ are piecewise 
smooth functions in the sense of \cite[Definition 4.5.1]{FP03} and thus
locally Lipschitz continuous at any point  \citep[Lemma 
4.6.1 (a)]{FP03}. Consequently, we obtain from Exercise~10.10 of \cite{RW98} 
that $\partial (f+g-h)(w) \subset \partial f(w) + \partial g(w) - \partial 
h(w)$, where equality holds if $f$ and $h$ are differentiable at 
$w$. Hence, by \cite[Theorem 10.1]{RW98}, a local minimum of $f+g-h$ is a 
critical point. We now show that criticality is a tighter condition than being 
a fixed point.

\begin{theorem}
	\label{thm:fixed_is_critical}
	Suppose that \cref{assum:min_convex} holds and $D_{i}(f)$ is a regular 
	closed set, that is, $D_{i}(f) = \cl ({\rm int}(D_{i}(f)))$, for any $i\in 
	I$, where $\cl$ and ${\rm int}$ are respectively the closure and
	the interior of a set.
	Then any 
	fixed point of $\Tpdmc$ is a critical point of $f+g-h$.
\end{theorem}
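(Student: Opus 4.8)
The plan is to translate the fixed-point condition into a subgradient inclusion coming from the proximal step, and then to reassemble a criticality certificate using the convexity of $h$ together with the regularity of the active piece of $f$ at $w$.

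First, since $w\in\Fix(\Tpdmc)$, I would invoke the decomposition $\Tpdmc(w)=\bigcup_{\iota:\,w\in D_\iota}T_\iota(w)$ established in the proof of \cref{thm:global_pdmc}, with $T_\iota$ and $D_\iota$ as in \cref{eq:T_ijl}, to obtain indices $i\in I$, $j\in J$, $m\in M$ with $w\in D_i(f)\cap D_m(h)$, with $v\coloneqq w-\lambda\nabla f_i(w)+\lambda\nabla h_m(w)\in D_j(M_g^\lambda)$, and with $w=\prox_{\lambda g_j}(v)$. By \cref{lemma:moreau_prox}(b) this gives $w\in\prox_{\lambda g}(v)$, i.e. $w$ globally minimizes $z\mapsto g(z)+\tfrac{1}{2\lambda}\|z-v\|^2$. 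Fermat's rule \citep[Theorem~10.1]{RW98} together with the sum rule for a smooth perturbation then yields $0\in\partial g(w)+\tfrac{1}{\lambda}(w-v)$, that is, $\nabla h_m(w)-\nabla f_i(w)\in\partial g(w)$. Moreover, since $h$ is convex and $w\in D_m(h)$, \cref{remark:consequences_assumption}(d) gives $\nabla h_m(w)\in\partial h(w)$.

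The crux is then to establish $\nabla f_i(w)\in\partial f(w)$, and this is exactly where the hypothesis $D_i(f)=\cl({\rm int}(D_i(f)))$ is used. If $w\in{\rm int}(D_i(f))$, then $f\equiv f_i$ on a neighborhood of $w$, so $\hat\partial f(w)=\{\nabla f_i(w)\}$ and we are done. Otherwise, regularity of $D_i(f)$ provides a sequence $w^\nu\to w$ with $w^\nu\in{\rm int}(D_i(f))$; for each $\nu$, $f\equiv f_i$ on a neighborhood of $w^\nu$, hence $\nabla f_i(w^\nu)\in\hat\partial f(w^\nu)$, and by continuity of $f_i$ and $\nabla f_i$ we have $f(w^\nu)=f_i(w^\nu)\to f_i(w)=f(w)$ and $\nabla f_i(w^\nu)\to\nabla f_i(w)$. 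The definition of the limiting subdifferential then forces $\nabla f_i(w)\in\partial f(w)$. Combining the three inclusions through the identity $0=\nabla f_i(w)+\bigl(\nabla h_m(w)-\nabla f_i(w)\bigr)-\nabla h_m(w)$ yields $0\in\partial f(w)+\partial g(w)-\partial h(w)$, so $w$ is a critical point of $f+g-h$.

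I expect the only real obstacle to be this last inclusion: without regularity of $D_i(f)$, the active piece $f_i$ at a boundary point $w$ of $D_i(f)$ need not be approachable from the interior, so there is no reason for $\nabla f_i(w)$ to lie in $\partial f(w)$, and one should in fact expect $\Fix(\Tpdmc)$ to be strictly larger than the critical set in that case. The remaining ingredients — the prox-step subgradient inclusion via \cref{lemma:moreau_prox}(b) and $\nabla h_m(w)\in\partial h(w)$ via \cref{remark:consequences_assumption}(d) — are routine subdifferential calculus.
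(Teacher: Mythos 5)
Your proposal is correct and follows essentially the same route as the paper's proof: extract the active indices $(i,j,m)$ from the fixed-point inclusion, read off $\nabla h_m(w)-\nabla f_i(w)\in\partial g(w)$ from the prox-step optimality condition, note $\nabla h_m(w)\in\partial h(w)$, and use the regularity $D_i(f)=\cl({\rm int}(D_i(f)))$ to approximate $w$ from ${\rm int}(D_i(f))$ where $\hat\partial f=\{\nabla f_i\}$, so that $\nabla f_i(w)\in\partial f(w)$ by the limiting subdifferential. Your added remark that $f(w^\nu)\to f(w)$ (needed in the definition of $\partial f$) is a small but welcome precision over the paper's wording.
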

\begin{proof}
	Let $w\in \Fix (\Tpdmc)$, say $w \in \prox_{\lambda g} 
	(w-\lambda \nabla f_i(w)+\lambda \nabla h_m (w))$ for some $(i,m)\in 
	I\times M$. Then $- \nabla f_i(w) + \nabla h_m(w) \in \partial g(w)$ by 
	\cite[Theorem 10.1]{RW98}. Since 
	$\nabla h_m(w) \in \partial h(w)$ by \cref{remark:consequences_assumption} 
	(d), it 
	suffices to show that $\nabla f_i(w) \in \partial f(w)$. Since $w\in 
	D_i(f)$ and we have from hypothesis that $D_{i}(f) = \cl ({\rm 
		int}(D_{i}(f)))$, there exists a sequence $\{w^k\}
	\subset {\rm int} (D_i(f))$ such that $w^k \to w$. Moreover, since 
	$f\equiv f_i$ on ${\rm int} 
	(D_i(f))$, $f$ is differentiable on ${\rm int} (D_i(f))$, and so 
	$\hat{\partial}f(w^k) = \{\nabla f_i(w^k)\}$ for all $k$. By the continuity 
	of 
	$\nabla f_i$, we then have $\nabla f_i(w^k) \to \nabla f_i(w)$ so that 
	$\nabla f_i(w) \in \partial f(w)$, as desired. 
		\ifdefined\submit
		\qed
		\else
		\qedhere
		\fi
\end{proof}

\begin{remark}
	\begin{enumerate}[(a)]
		\item \cref{thm:fixed_is_critical}, together with 
		\cref{thm:global_pdmc_full}, 
		guarantees that 
		\cref{eq:pdmc} is globally 
		 convergent to critical points of the objective 
		function. Note 
		that the assumption on $D_i(f)$ trivially holds when $\abs{I}=1$, as in 
		the 
		illustrative applications that we will see in 
		\cref{sec:affinefeasibility}.
		
		\item The fixed-point set $\Fix (\Tpdmc)$ may be 
		strictly contained in the set of critical points; for instance, see 
		\cref{ex:local_fixed_critical}.
	\end{enumerate}
\end{remark}

\modify{As we have seen in \cref{sec:general}, single-valuedness at a fixed
point is critical for global convergence.
The following property shows that when $|I| = 1$, namely when $f$ has
Lipschitz continuous gradient, $\Tps$ is single-valued for
a sufficiently small stepsize.}

\begin{proposition}
	\label{prop:nondegenerate_fixedpoints}
	\modify{Suppose \cref{assum:min_convex} holds.}
	If $w\in \Fix(T_{\rm PS}^{\lambda})$ for some \modify{$\lambda >
	0$},\cpsolved{$\lambda \leq 1/L$ is not used here, right?}
	then there 
	exists $\varepsilon>0$ such that $w\in \Fix(T_{\rm PS}^{\bar{\lambda}})$ 
	for all $\bar{\lambda} \in (0,\varepsilon]$. Moreover, $T_{\rm PS}^{\bar{\lambda}}$ is single-valued at $w$ for any $\bar{\lambda}\in (0,\varepsilon]$ if $|I|=1$. 
\end{proposition}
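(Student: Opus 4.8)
The plan is to work directly with the fixed-point condition. Writing $S=\bigcup_{j\in J}R_j$ as in \cref{assum:min_convex} with each $R_j$ nonempty, closed and convex, and recalling \cref{eq:PS} and \cref{eq:prime}, the statement $w\in\Fix(T_{\rm PS}^{\lambda})$ means $w\in S$ and there is $y\in f'(w)=\{\nabla f_i(w):i\in I,\ w\in D_i(f)\}$ such that $w\in P_S(w-\lambda y)$. Squaring and expanding the defining inequality $\|w-(w-\lambda y)\|^2\le\|z-(w-\lambda y)\|^2$ of the projection and cancelling the common $\lambda^2\|y\|^2$ term, this is equivalent to $w\in S$ together with
\[
\langle z-w,\,y\rangle\ \geq\ -\frac{1}{2\lambda}\|z-w\|^2\qquad\text{for all }z\in S .
\]
Since $f'(w)$ does not depend on the stepsize, this reformulation is all I will use.

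\emph{First claim.} Because $\bar\lambda\mapsto 1/(2\bar\lambda)$ is decreasing, for every $\bar\lambda\in(0,\lambda]$ and every $z\in S$ we have $-\frac{1}{2\bar\lambda}\|z-w\|^2\le-\frac{1}{2\lambda}\|z-w\|^2\le\langle z-w,y\rangle$, so the displayed inequality persists with $\lambda$ replaced by $\bar\lambda$. Hence $w\in P_S(w-\bar\lambda y)\subseteq T_{\rm PS}^{\bar\lambda}(w)$, i.e. $w\in\Fix(T_{\rm PS}^{\bar\lambda})$ for all $\bar\lambda\in(0,\lambda]$, and already $\varepsilon:=\lambda$ works for the first assertion. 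Note this also records that $\dist(w-\bar\lambda y,S)=\|w-(w-\bar\lambda y)\|=\bar\lambda\|y\|$ for all such $\bar\lambda$, a fact I reuse below.

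\emph{Single-valuedness when $|I|=1$.} Now $D_1(f)=\E$, so $f'(w)=\{\nabla f_1(w)\}=:\{y\}$ and $T_{\rm PS}^{\bar\lambda}(w)=P_S(w-\bar\lambda y)$. If $y=0$ then $P_S(w)=\{w\}$ and we are done, so assume $y\neq 0$. Applying \cref{lemma:moreau_prox}(b) to $g=\delta_S=\min_{j\in J}\delta_{R_j}$ and using $M^{\bar\lambda}_{\delta_S}(u)=\dist(u,S)^2/(2\bar\lambda)$ and $M^{\bar\lambda}_{\delta_{R_j}}(u)=\dist(u,R_j)^2/(2\bar\lambda)$ gives
\[
P_S(u)=\bigcup_{j:\,\dist(u,R_j)=\dist(u,S)}\{P_{R_j}(u)\},
\]
each $P_{R_j}(u)$ being a single point. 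I then shrink the $\varepsilon$ of the first part so that, in addition, $2\varepsilon\|y\|<\dist(w,R_j)$ for every $j\in J$ with $w\notin R_j$ — a finite family of strictly positive reals by closedness of the $R_j$. Fix $\bar\lambda\in(0,\varepsilon]$ and set $u:=w-\bar\lambda y$; by the first claim $w\in P_S(u)$, hence $\dist(u,S)=\bar\lambda\|y\|$. If $w\notin R_j$, then $\dist(u,R_j)\ge\dist(w,R_j)-\bar\lambda\|y\|>2\bar\lambda\|y\|-\bar\lambda\|y\|=\bar\lambda\|y\|=\dist(u,S)$, so no such $j$ appears in the union. If $w\in R_j$ and $j$ does appear, then $w\in R_j$ attains $\|w-u\|=\bar\lambda\|y\|=\dist(u,R_j)$, so by uniqueness of the projection onto the convex set $R_j$ we get $P_{R_j}(u)=w$. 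Therefore $P_S(u)=\{w\}$, i.e. $T_{\rm PS}^{\bar\lambda}$ is single-valued at $w$ (with value $w$) for all $\bar\lambda\in(0,\varepsilon]$.

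\emph{Main obstacle.} The first part is merely monotonicity of the projection inequality in the stepsize. The only genuinely delicate point is in the second part: ruling out that some piece $R_j$ not containing $w$ becomes a nearest piece of the perturbed point $w-\bar\lambda y$. This is exactly what forcing $\varepsilon$ below $\dist(w,R_j)/(2\|y\|)$ over the finitely many indices $j$ with $w\notin R_j$ takes care of, using that these distances are strictly positive because each $R_j$ is closed.
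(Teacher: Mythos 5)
Your proof is correct. It reaches the same two conclusions as the paper but by a partially different route, and the difference is worth noting. For the persistence of the fixed point, the paper localizes via \cref{lemma:invariantball} and then argues through the normal cone: from $w=P_{R_j}(w-\lambda\nabla f_i(w))$ for every active $R_j$ it deduces $-\nabla f_i(w)\in N_{R_j}(w)$, which is scale-invariant, and then needs $\varepsilon=\delta/(2\|\nabla f_i(w)\|)$ to keep inactive pieces from interfering. Your monotonicity argument — observing that the expanded projection inequality $0\le\|z-w\|^2+2\lambda\langle z-w,y\rangle$ only improves when $\lambda$ decreases — dispenses with any localization for this first claim and yields the quantitatively sharper conclusion that $\varepsilon=\lambda$ already works (which is consistent with, and in fact explains, the observation in \cref{ex:local_fixed_critical} that $\Fix(\Tmap^{\lambda})\subset\Fix(\Tmap^{\bar\lambda})$ whenever $\bar\lambda\le\lambda$). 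For single-valuedness, both arguments have the same skeleton — restrict attention to the pieces $R_j$ nearest to the perturbed point, show each such piece must contain $w$, and use uniqueness of projection onto a closed convex set to conclude $P_{R_j}(w-\bar\lambda y)=w$ — but where the paper invokes \cref{lemma:invariantball} to exclude inactive pieces, you do it by the explicit bound $2\varepsilon\|y\|<\dist(w,R_j)$ over the finitely many $j$ with $w\notin R_j$, which is essentially an unpacked, quantitative version of that lemma in this special case. The paper's route generalizes more readily (the invariant-ball lemma is reused elsewhere), while yours is more self-contained and makes the dependence of $\varepsilon$ on the geometry explicit.
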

\begin{proof}
	Let $i\in I$ \modify{be} such that $w\in D_i(f)$ \modify{with}
	$w\in P_S(w-\lambda \nabla f_i(w))$. For each $j\in J$, denote
	$D_j \coloneqq \{ z\in \E: \dist(z,S)=\dist(z,R_j)\}$ and $\D
	\coloneqq \{ D_j : j\in J\}$. By \cref{lemma:invariantball}, there
	exists $\delta>0$ such that $\phi_{\D}(z)\subset \phi_{\D}(w)$ for
	all $z\in B(w,\delta)$.  Meanwhile, since $w\in  P_S(w-\lambda
	\nabla f_i(w))$ and each $R_j$ is convex, \modify{it follows from the
	definition of $P_S$ that} $w=P_{R_j}(w-\lambda \nabla f_i(w))$ for
	all $j\in \phi_{\D}(w)=\{ j\in J: w\in R_j\}$.\jhsolved{I skipped some arguments here, but we can put more details in the proof if you think that's better. To complete the argument, we note that $\|w-(w-\lambda \nabla f_i(w))\| \leq \|z-(w-\lambda \nabla f_i(w))\| $ for any $z\in S$. In particular, $\|w-(w-\lambda \nabla f_i(w))\| \leq \|P_{R_j}(w-\lambda \nabla f_i(w))-(w-\lambda \nabla f_i(w))\| $. But since $w\in R_j$, then $ \|P_{R_j}(w-\lambda \nabla f_i(w))-(w-\lambda \nabla f_i(w))\| \leq  \|w-(w-\lambda \nabla f_i(w))\|$. Since $R_j$ is closed and convex (which implies uniqueness of the projection), then $P_{R_j}(w-\lambda \nabla f_i(w)) = w$. This is the reason why we can consider degenerate fixed points as well.} Hence, $-\nabla 
	f_i(w) \in N_{R_j}(w)$, the normal cone to $R_j$ at 
	$w$, for all $j\in \phi_{\D}(w)$. 
	Now, set $\varepsilon = 
	\delta/(2\|\nabla f_i(w)\|)$ and take any $\bar{\lambda} \in 
	(0,\varepsilon]$. We have $\|w-(w-\bar{\lambda}\nabla f_i(w))\| < \delta$, so that
	$\phi_{\D}(w-\bar{\lambda}\nabla f_i(w))\subset \phi_{\D}(w)$. Then 
	\[ P_S(w-\bar{\lambda}\nabla f_i(w)) = \bigcup _{j\in \phi_{\D}(w-\bar{\lambda}\nabla f_i(w))} P_{R_j}(w-\bar{\lambda}\nabla f_i(w))=w ,\]
 where the last equality holds since $\phi_{\D}(w-\bar{\lambda}\nabla f_i(w))\subset \phi_{\D}(w)$ and $-\nabla 
	f_i(w) \in N_{R_j}(w)$ for all $j\in \phi_{\D}(w)$. It follows that $w\in T_{\rm PS}^{\bar \lambda}(w)$.
	\modify{If $|I|=1$, we further obtain $ T_{\rm PS}^{\bar
	\lambda}(w)=w$.}
		\ifdefined\submit
		\qed
		\else
		\qedhere
		\fi
\end{proof}
We demonstrate by an example the relationship among the 
sets of global/local minima, critical points of 
\cref{eq:min_alternatingprojections}, and fixed points of $T_{\rm MAP}^{\lambda}$.

\begin{example}\label{ex:local_fixed_critical}
	Consider \cref{eq:min_alternatingprojections} with $S_1 = \{ 
	(a,1):a\in \Re\}$ and $S_2=\{(a,b):a,b\geq 0, ab=0 \}$. 
	The set of local minima of 
	\cref{eq:min_alternatingprojections} and $\Fix (T_{\rm 
		MAP}^{\lambda})$ are given respectively
	by $S^
	* \coloneqq \{ (0,1)\}\cup \{ (t,0) : t>0\}$ and $ \Fix (T_{\rm 
		MAP}^{\lambda}) = \{(0,1)\} \cup \{ 
	(t,0):t\geq \lambda\}$ for any $\lambda\in (0,1]$.
	Clearly, $S^*$ is a subset of $C^*$, the set of critical points
	of $f+\delta_{S_2}$. 
	Thus, we have $\{(0,1)\} = S_1\cap S_2 \subsetneq 
	\Fix (\Tmap^{\lambda})\subsetneq C^*$ for any $\lambda \in (0,1]$. It 
	is then not difficult to verify the claims of \cref{thm:local_is_fixed} and 
	\cref{thm:fixed_is_critical}. 
	Moreover, observe that each point in $\Fix(T_{\rm 
		MAP}^{\lambda}) \subset 
	\Fix(\Tmap^{\bar{\lambda}})$ whenever $ \bar{\lambda} \leq \lambda$ and $\Tmap^{\bar{\lambda}}$ is single-valued on $\Fix(T_{\rm 
		MAP}^{\lambda})$ if $\bar{\lambda}<\lambda$, 
	demonstrating 
	\cref{prop:nondegenerate_fixedpoints}. 
\end{example}

\subsection{Acceleration schemes for PDMC}
\label{sec:acce_pdmc}
\begin{wrapfigure}{r}{0.27\textwidth}
	\centering 
	\includegraphics[scale=.75]{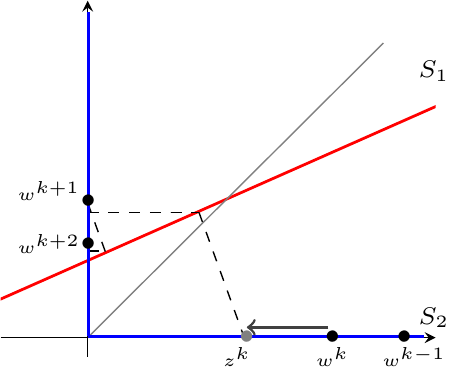}
	\caption{Illustration of accelerated MAP
		for a two-set feasibility problem with union convex sets.
		$P_{S_2}$ is multivalued on the gray line.}
	\label{fig:motiv}
\end{wrapfigure}
In this section, We follow the scheme described in 
\cref{alg:accelerated_FPA} and the discussion on component
identification in \cref{remark:component} to propose two such
acceleration techniques.
The first one is motivated by the following example.
\begin{example}
	\label{ex:acce_motive}
	Let $\E = \Re^2$, $S_1$ be any straight line with a positive slope, and
	$S_2=A\cup B$, where $A \coloneqq \{(a,0): a \geq 0\}, B \coloneqq \{(0,b): 
	b \geq
	0\}$;
	see
	\cref{fig:motiv}. Consider
	\cref{eq:minconvex_optimization} with $f(w) = \frac{1}{2} \dist 
	(w,S_1)^2$, 
	$g(w)= \delta_{S_2} = \min \{ \delta_A(w), \delta_B(w)\}$, and $h \equiv 
	0.$ Then it can be shown that the PDMC 
	iterates with stepsize $\lambda = 1$ coincide with the MAP
	iterates; see also \cref{sec:map}. Notice that 
	this algorithm generates points confined in the union convex set $S_2$. To 
	speed up the convergence of the algorithm to the solution, we conduct 
	extrapolation if two consecutive iterates lie on the same convex set. As 
	illustrated in \cref{fig:motiv}, if $w^{k-1}$ and $w^k$ both 
	lie on $A$ or $B$, we extrapolate along the direction $w^k - w^{k-1}$ 
	to get an intermediate point $z^k$ before conducting alternating 
	projections to obtain $w^{k+1}$. 
	Intuitively, the iterates generated by this procedure tend to get closer to 
	$S_1$ faster than when (non-accelerated) MAP only is used. 
\end{example}

Inspired by the above example,
we propose to proceed with the 
extrapolation step in Step 1 of \cref{alg:accelerated_FPA} only when two consecutive iterates ``activate'' the same 
components in $f$, $g$ and $h$. Formally, let
\begin{equation}
	\hat{\D} \coloneqq \{  D_i(f) \cap 
	D_j(g)\cap D_m(h) : (i,j,m)\in I\times J\times M\}		
	\label{eq:Dhat}
\end{equation}
and define
\begin{equation}
	\chi_k \coloneqq \begin{cases}
		1 & \text{if}~\phi_{\hat{\D}}(w^k) \cap 
		\phi_{\hat{\D}}(w^{k-1}) \neq \emptyset \text{ and}~k\geq 
		1, \\
		0 & \text{otherwise},
	\end{cases}
	\label{eq:chi_k}
\end{equation}
where  $\phi_{\hat \D}$ is defined in \cref{eq:indexfunction}.
Then, as summarized in \cref{alg:PDMC_accelerated}, we simply replace the 
step $p_k$ in Step 1 
of \cref{alg:accelerated_FPA} by $\chi_k p_k$ to take into account the 
described restriction.
It is clear that global subsequential convergence of
\cref{alg:PDMC_accelerated} to a fixed point of $\Tpdmc$ directly
follows from \cref{thm:global_acce}. 

\begin{algorithm}[tb]
	Let $V=f+g-h$. Choose 
	$\sigma>0$, $\lambda \in (0,1/L]\cap (0,\bar{\lambda})$, and $w^0 \in \E$. 
	Set $w^{-1} = w^0$ and $k=0$.
	\begin{description}
		\item[Step 1.] Set $z^k = w^k + t_kp^k$, where $p^k = \chi_k (
		w^k-w^{k-1})$, $\chi_k$ is given by \cref{eq:chi_k}, and
		$t_k\geq 0$ satisfies \cref{eq:descent_acce_fpa}.
		\item[Step 2.] Set $w^{k+1} \in \Tpdmc(z^k)$,
		$k=k+1$, and go back to Step 1. 
	\end{description}
	\caption{Accelerated PDMC algorithm
		for \cref{eq:minconvex_optimization}.}
	\label{alg:PDMC_accelerated}
\end{algorithm}

In the same spirit as \cref{remark:component}, applying
\cref{lemma:invariantball} to \cref{eq:Dhat} suggests that latter
iterates of the \cref{eq:pdmc} algorithm indicate which components of
the objective function are activated by a fixed point. Using this
observation, we propose to identify and safeguard the activated
component by checking consecutive component changes in \cref{alg:PDMC_plus}.
Our algorithm has a spirit similar to the heuristics for manifold
identification in \citep{YSL20a,LeeCP20,LeeW12a}
but is with theoretical tools thoroughly different from these works.

%

\begin{algorithm}[tb]
	Choose  $w^0\in \E$, $N\in \mathbb{N}$. Set Unchanged $= 0$, $k=0$.
	\begin{description}
		\item[Step 1.] Set Unchanged $= $ $\chi_k$(Unchanged + 1), where 
		$\chi_k$ is given by \cref{eq:chi_k}.
		
		\item[Step 2.] Compute $w^{k+1}$ according to the following
			rules:
		\begin{itemize}
			\item[2.1.] If Unchanged $<N$: set $w^{k+1}\in T_{\rm 
				PDMC}^{\lambda}(w^k)$.
			\item[2.2.] If Unchanged $=N$: set Unchanged = $-1$, pick $(i,j,m)\in
			\phi_{\hat{\D}} (w^k)$, and solve 
			\begin{equation*}
				w^{k+1}\in \argmin_{z\in \E} f_i(z)+g_j(z)-h_m(z).
			\end{equation*}
		\end{itemize}
		\item[Step 3.] Terminate if $w^{k+1} = w^k$; otherwise
				set $k = k+1$ and go back to Step 1.
	\end{description}
	\caption{PDMC with component 
		identification for \cref{eq:minconvex_optimization}.}
	\label{alg:PDMC_plus}
\end{algorithm}

\section{Affine-union convex set feasibility 
problems}\label{sec:affinefeasibility}

In this section,
we establish global convergence of several algorithms for solving
\cref{eq:feasibilityproblem} involving an affine set
	\begin{equation}
		S_1 = \{ w\in \Re^q : Aw = b\},
		\label{eq:S1_general}
	\end{equation}
where $A\in \Re^{m\times q}$ is a matrix with full row rank, and a
union convex set $S_2$. 
Specifically, we consider the sparse affine feasibility problem and a 
feasibility reformulation of the linear complementarity problem in 
\cref{sec:SAFP,sec:lcp}, respectively.
\modify{The results for LCPs are then applicable to GAVE following
	\cite{ACT21} as discussed in (V) of \cref{sec:contributions}.}
Recall that in 
general, the feasibility problem \cref{eq:feasibilityproblem}
can be reformulated as an optimization problem, either as 
\cref{eq:min_averagedprojections} or
\cref{eq:min_alternatingprojections}. Other than these reformulations, the 
affine structure of $S_1$ given by \cref{eq:S1_general} enables recasting the feasibility problem as
	\begin{align}
		\min_{w\in \E}\, &\frac{1}{2}\|Aw - b\|^2 +
		\frac{1}{2}\dist(w,S_2)^2,\quad \text{ or }
		\label{eq:opt_sparseaffine}\\
		\min_{w\in \E}\, &\frac{1}{2}\|Aw-b\|^2 + \delta_{S_2} (w).
		\label{eq:opt_sparseaffine2}
	\end{align}	
To unify the analyses of algorithms for these four optimization reformulations, 
we first note that the projection onto 
$S_1$ is given by $P_{S_1}(w) = w - A^{\dagger}(Aw-b)$ \cite[Lemma 4.1]{BK04},
where, $A^{\dagger}$ is the Moore-Penrose inverse of $A$, given by 
$A^{\dagger}  = A^\T (AA^\T)^{-1}$ since $A$ has full row rank. With this, we 
have
	\begin{equation*}
		\dist (w,S_1)^2  =
		\|A^{\dagger}(Aw-b)\|^2 
		 =  w^\T A^\T \hat Q Aw - 2w^\T A^\T \hat Q b + b^\T  \hat Q b,
		\label{eq:f_sparseaffine}
	\end{equation*}
where $\hat Q\coloneqq (AA^\T)^{-1}$. By denoting 
	\begin{equation}
		f_Q(w) \coloneqq \frac{1}{2}w^\T A^\T Q Aw - w^\T A^\T Q b + 
		\frac{1}{2}b^\T  Q b ,
		\label{eq:f_Q}
	\end{equation}
	we get
$f_Q(w) = \|Aw-b\|^2/2$ if $Q=\Id$, and $f_Q(w) = \dist(w,S_1)^2/2$ if $Q=(AA^\T)^{-1}$.
Thus, we may unify the convergence analyses 
of algorithms for \cref{eq:min_averagedprojections,eq:opt_sparseaffine} through varying
$Q$ in
\begin{equation}
	\min _{w\in \Re^n} f_Q(w) + \frac{1}{2}\dist(w,S_2)^2 ,
	\label{eq:unified_sparseaffine}
\end{equation}
and similarly for \cref{eq:min_alternatingprojections,eq:opt_sparseaffine2}, we may
consider
\begin{equation}
	\min _{w\in \Re^n} f_Q(w) + \delta_{S_2}(w).
	\label{eq:unified_sparseaffine2}
\end{equation}

Note that $f_Q$ is a convex function with gradient 
	\begin{equation}
		\nabla f_Q(w) = A^\T Q (Aw-b),
		\label{eq:grad_f_Q}
	\end{equation}
	which is Lipschitz continuous with parameter
	\begin{equation}
		L_Q = \begin{cases}
			1 & \text{if}~Q=(AA^\T)^{-1}, \\
			\|A\|^2 & \text{if}~Q=\Id.
		\end{cases}
		\label{eq:LQ}
	\end{equation}
Moreover, we have the following:
	\begin{enumerate}[(i)]
		\item As noted in \cref{sec:mavep}, $g$ and $h$
		given in \cref{eq:dist2_dc} 
		satisfy  \cref{assum:min_convex} (c) and (d) since $S_2$ is a union 
		convex set. By using this decomposition in \cref{eq:unified_sparseaffine} and then applying
		\cref{eq:pdmc}, we get
			\begin{equation}
				\Tpdmc (w^k) = \frac{1}{1+\lambda} 
				\left(w^k - \lambda \nabla f_Q(w) + 
				\lambda P_{S_2}(w^k)\right).
				\label{eq:pdmc_affine_ucs}
			\end{equation}
	
		\item A direct application of \cref{eq:fb} to 
		\cref{eq:unified_sparseaffine} with $g = \dist (\cdot, 
		S_2)^2/2$
		leads to
			\begin{equation}
				\Tfb (w^k)  = 
				\frac{\lambda}{1+\lambda}P_{S_2}(w^k - \lambda \nabla 
				f_Q(w^k)) + \frac{1}{1+\lambda}(w^k - \lambda \nabla f_Q(w^k)).
				\label{eq:fb_affine_ucs}
			\end{equation}
		
		\item The PS algorithm \cref{eq:PS} for solving 
		\cref{eq:unified_sparseaffine2} is given by
		\begin{equation}
			 T_{\rm PS}^{\lambda} (w^k) = P_{S_2}(w^k - \lambda 
			\nabla 
			f_Q(w^k)).
			\label{eq:ps_affine_ucs}
		\end{equation}
	\end{enumerate}
When
$Q=(AA^\T)^{-1}$, the operators $\Tpdmc$, 
$\Tfb$ and $\Tps$ above respectively coincide with 
$\Tmavep^{\lambda}$, $\Tmarp^{\lambda}$ and $\Tmap^{\lambda}$
presented in \cref{sec:feasibility}.

\begin{remark}
	Except for \cref{assum:min_convex} (e), 
	all the other assumptions are satisfied. Together with the convexity of $f_Q$ and \cref{remark:specialcases} (a), we obtain from \cref{thm:global_pdmc_full} that the algorithms \cref{eq:pdmc_affine_ucs,eq:fb_affine_ucs,eq:ps_affine_ucs} are globally convergent to fixed points if we can show that the objective functions are coercive. 
\end{remark}

\subsection{Sparse affine feasibility}
\label{sec:SAFP}
We consider the \emph{sparse affine feasibility problem} (SAFP), which 
involves 
solving \cref{eq:feasibilityproblem} with \cref{eq:SAFP},
where $0<s\leq n$, $A\in \Re^{m\times n}$ has full row rank and $b\in \Re^m$.
\citet{HLN14} have shown
that $S_2 = A_s$ can be decomposed as
\begin{equation}
	A_s = \bigcup _{\iota \in \I_s} R_{\iota}, \quad
	\I_s \coloneqq \{ \iota \subset [n] : \iota \text{ has 
	}s~\text{elements}\},\quad R_{\iota} \coloneqq \Ran \left(\Id_{:,\iota}\right),
	\label{eq:I_Ri_sparse}
\end{equation} 
so $S_2$ is indeed a union convex 
set
and the projection onto $S_2$ is given by
\begin{equation*}
	P_{S_2}(w) = \{ P_{R_{\iota}}(w): \iota \in \I_s \text{ such that } \min 
	_{j\in \iota} |w_j| \geq \max _{j\in \iota^c} |w_j| \}.
\end{equation*}
In turn, we can use the algorithms \crefrange{eq:pdmc_affine_ucs}{eq:ps_affine_ucs} to
solve the sparse affine feasibility problem.

We now show that these algorithms are globally convergent under conditions significantly weaker than those used in prior works \cite{BT11,HLN14}.
To establish our convergence results, we 
note the following simple but useful lemma. 


\begin{lemma}
	\label{lemma:rank}
	Let $A\in \Re^{m\times n}$ and $Q \in \Re^{m\times m}$ be with $\rank (A) = 
	\rank (Q) = m$, and let $\Lambda \subset [n]$. If $\rank (A_{:,
	\Lambda}) = \abs{\Lambda}$, then $ \rank 
	((A^\T Q A)_{:,\Lambda}) = \abs{\Lambda}$. Consequently, 
	$\lambda_{\min}((A^\T Q A)_{:,\Lambda}^\T (A^\T Q A)_{:,
	\Lambda})>0$. 
\end{lemma}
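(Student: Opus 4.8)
The plan is to reduce everything to two elementary facts: multiplication by an invertible square matrix preserves rank, and left-multiplication by a matrix with trivial kernel preserves rank. First I would note that $(A^\T Q A)_{:,\Lambda} = A^\T Q A_{:,\Lambda}$, an $n\times\abs{\Lambda}$ matrix, so it suffices to show this matrix has full column rank $\abs{\Lambda}$. Since $Q\in\Re^{m\times m}$ and $\rank(Q)=m$, the matrix $Q$ is invertible, whence $\rank(Q A_{:,\Lambda}) = \rank(A_{:,\Lambda}) = \abs{\Lambda}$ by hypothesis.

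Next I would use that $A\in\Re^{m\times n}$ has $\rank(A)=m$, which means $A^\T\in\Re^{n\times m}$ has full column rank $m$, i.e. $\Ker(A^\T)=\{0\}$. Thus $A^\T$ sends linearly independent vectors to linearly independent vectors, so applying $A^\T$ to (the columns of) $QA_{:,\Lambda}$ does not decrease rank: $\rank\bigl(A^\T Q A_{:,\Lambda}\bigr) = \rank\bigl(Q A_{:,\Lambda}\bigr) = \abs{\Lambda}$. This establishes the first assertion.

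For the consequence, set $B \coloneqq (A^\T Q A)_{:,\Lambda}\in\Re^{n\times\abs{\Lambda}}$, which we have just shown has full column rank. Then $B^\T B$ is symmetric positive semidefinite, and for any $x\in\Re^{\abs{\Lambda}}$ the identity $x^\T B^\T B x = \norm{Bx}^2 = 0$ forces $Bx=0$ and hence $x=0$; therefore $B^\T B$ is positive definite and $\lambda_{\min}(B^\T B)>0$. I do not expect any genuine obstacle here: the only point that needs a moment of care is recognizing that the hypothesis $\rank(A)=m$ for $A\in\Re^{m\times n}$ is equivalent to injectivity of $A^\T$ on $\Re^m$, which is precisely what makes left-multiplication by $A^\T$ rank-preserving.
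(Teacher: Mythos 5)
Your proposal is correct and follows essentially the same route as the paper's proof, which writes $A_{:,\Lambda}=AE$ with $E=\Id_{:,\Lambda}$ and invokes the invertibility of $Q$ together with the injectivity of $A^\T$ (from $\rank(A)=m$) to conclude $\rank(A^\T Q A_{:,\Lambda})=\abs{\Lambda}$. You simply spell out the rank-preservation steps and the positive-definiteness of $B^\T B$ that the paper leaves implicit.
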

\begin{proof}
	Let $E = \Id_{:,\Lambda}$, then 
	$A_{:,\Lambda} = AE$ and $(A^\T 
	Q A)_{:,\Lambda} = A^\T QAE$. With the rank assumptions, the 
	result immediately follows. 
		\ifdefined\submit
		\qed
		\else
		\qedhere
		\fi
\end{proof}

\begin{theorem}
	\label{thm:global_pdmc_fb_sparse}
	Consider \cref{eq:feasibilityproblem} with \cref{eq:SAFP}.
	Let $A\in \Re^{m\times n}$ be of full row rank, $Q\in \{ (AA^\T), \Id\}$, and 
	$f_Q$ and $L_Q$ 
	be given by \cref{eq:f_Q} and \cref{eq:LQ}, respectively. Suppose there 
	exists $\nu_s>0$ such 
	that 
	\begin{equation}
		\nu_s \|w\|^2 \leq \|Aw\|^2, \quad \forall w\in A_s	.
		\label{eq:SRIP_half}	
	\end{equation}
	Then any sequence $\{w^k\}$ generated by the PDMC algorithm 
	\cref{eq:pdmc_affine_ucs} with $\lambda \in (0,1/L_Q]$ has an accumulation point $w^*\in \Fix (\Tpdmc)$, and full sequence convergence holds if $\Tpdmc$ is single-valued at $w^*$. The same conclusion holds for a sequence generated by the FB algorithm \cref{eq:fb_affine_ucs} with stepsizes $\lambda \in (0,1/L_Q)$. 
\end{theorem}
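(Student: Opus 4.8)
The plan is to recognize both \eqref{eq:pdmc_affine_ucs} and \eqref{eq:fb_affine_ucs} as instances of \cref{eq:pdmc} and then quote the general machinery of \cref{sec:minconvexoptimization}. For the PDMC iteration I use the decomposition from \cref{sec:mavep}: $f=f_Q$ (so $|I|=1$), $g=\tfrac12\|\cdot\|^2$ (so $|J|=1$, and $g$ is $1$-convex, whence $\bar\lambda=+\infty$), and $h=\max_{\iota\in\I_s}h_\iota$ with $h_\iota\coloneqq\tfrac12\|\cdot\|^2-\tfrac12\dist(\cdot,R_\iota)^2$, where $R_\iota$ is as in \eqref{eq:I_Ri_sparse}. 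For the FB iteration I take $f=f_Q$, $g=\tfrac12\dist(\cdot,S_2)^2=\min_{\iota\in\I_s}g_\iota$ with $g_\iota\coloneqq\tfrac12\dist(\cdot,R_\iota)^2$ (each $0$-convex, so again $\bar\lambda=+\infty$), and $h\equiv0$. Since $R_\iota$ is the coordinate subspace indexed by $\iota$, one has $\dist(w,R_\iota)^2=\|w_{\iota^c}\|^2$, so in both cases every ``piece'' equals $f_i+g_j-h_m=f_Q(w)+\tfrac12\dist(w,R_\iota)^2=f_Q(w)+\tfrac12\|w_{\iota^c}\|^2$ for some $\iota\in\I_s$. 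Thus the only nontrivial part of \cref{assum:min_convex} left to check (cf.\ the remark preceding the theorem) is item (e): that $w\mapsto f_Q(w)+\tfrac12\|w_{\iota^c}\|^2$ is coercive for every $\iota\in\I_s$.

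The heart of the proof is this coerciveness claim, and it is exactly where \eqref{eq:SRIP_half} enters. Fix $\iota\in\I_s$ and decompose any $w$ as $w=u+v$ with $\supp(u)\subset\iota$ and $\supp(v)\subset\iota^c$, so that $\|v\|=\|w_{\iota^c}\|$ and $u\in R_\iota\subset A_s$. Let $\{w^k\}$ satisfy $\|w^k\|\to\infty$. If $\|v^k\|\to\infty$ along a subsequence, then $\tfrac12\|w^k_{\iota^c}\|^2\to\infty$ and we are done; otherwise $\{v^k\}$ is bounded and $\|u^k\|\to\infty$. Applying \eqref{eq:SRIP_half} to $u^k\in A_s$ gives $\|Au^k\|\ge\sqrt{\nu_s}\,\|u^k\|\to\infty$, while $\{Av^k\}$ and $b$ are bounded, so $\|Aw^k-b\|\ge\|Au^k\|-\|Av^k\|-\|b\|\to\infty$. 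Since $(AA^\T)^{-1}$ is positive definite, $\|A^\dagger z\|^2=z^\T(AA^\T)^{-1}z\ge\|z\|^2/\|A\|^2$ for all $z\in\Re^m$; hence whether $f_Q(w)=\tfrac12\|Aw-b\|^2$ (for $Q=\Id$) or $f_Q(w)=\tfrac12\|A^\dagger(Aw-b)\|^2$ (for $Q=(AA^\T)^{-1}$), we obtain $f_Q(w^k)\to\infty$. This proves (e), and \cref{remark:consequences_assumption}~(e) then yields coerciveness of the objective of \eqref{eq:unified_sparseaffine}.

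It remains to invoke the general theory. For FB, \cref{assum:min_convex} now holds in full, so \cref{thm:global_pdmc} gives boundedness of $\{w^k\}$ and that accumulation points lie in $\Fix(\Tfb)$ for $\lambda\in(0,1/L_Q)$; since $h\equiv0$, each $g_\iota$ is $0$-convex, and $Id-\lambda\nabla f_Q$ is nonexpansive for $\lambda\le 1/L_Q$ (as $f_Q$ is convex with $L_Q$-Lipschitz gradient by \eqref{eq:LQ}), \cref{thm:global_pdmc_full}~(b) delivers full-sequence convergence whenever $\Tfb$ is single-valued at $w^*$. For PDMC, \cref{assum:min_convex} also holds, and additionally $g=\tfrac12\|\cdot\|^2$ is convex, so \cref{thm:global_pdmc} (for $\lambda<1/L_Q$) together with \cref{cor:pdmc_g_convex} (for $\lambda=1/L_Q$) covers the full range $\lambda\in(0,1/L_Q]$; for the full-sequence statement, $g$ is $1$-convex and $\nabla h_\iota=P_{R_\iota}$ is nonexpansive, so \cref{thm:global_pdmc_full}~(a) applies when $\lambda<1/L_Q$, while for $\lambda=1/L_Q$ the individual operator $\prox_{\lambda g}\circ(Id-\lambda\nabla f_Q+\lambda\nabla h_\iota)$ is still nonexpansive (a $\tfrac1{1+\lambda}$-Lipschitz map composed with a $(1+\lambda)$-Lipschitz map), so \cref{thm:global_fullsequence} applies directly once $\Tpdmc$ is single-valued at $w^*$.

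The step I expect to be the main obstacle is the coerciveness claim, and within it the need to handle the two choices of $Q$ uniformly: \eqref{eq:SRIP_half} only lower-bounds $\|Aw\|$ on the sparse set $A_s$, so one must first peel off the $\iota^c$-block — which is precisely the $\dist$ term — before the hypothesis can be used, and for $Q=(AA^\T)^{-1}$ one needs the extra observation that $\|A^\dagger\cdot\|$ is bounded below by $\|\cdot\|/\|A\|$. Everything after that is bookkeeping, matching hypotheses to \cref{thm:global_pdmc,cor:pdmc_g_convex,thm:global_pdmc_full,thm:global_fullsequence}.
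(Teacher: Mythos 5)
Your proposal is correct and follows the same overall architecture as the paper: reduce everything to the coercivity of the pieces $V_\iota = f_Q + \tfrac12\dist(\cdot,R_\iota)^2 = f_Q + \tfrac12\|(\cdot)_{\iota^c}\|^2$ and then hand off to the general convergence theorems of \cref{sec:minconvexoptimization}. Where you genuinely diverge is inside the coercivity argument. The paper never uses \cref{eq:SRIP_half} quantitatively: it only extracts from it that $\rank(A_{:,\iota})=\abs{\iota}$, feeds this into \cref{lemma:rank} to get $\lambda_{\min}\bigl((A^\T Q A)_{:,\iota}^\T (A^\T Q A)_{:,\iota}\bigr)>0$, and then expands $f_Q(w^k)$ into a quadratic in $\|(w^k)_\iota\|$ whose leading coefficient is that eigenvalue (see \cref{eq:distS1>=_sparse,eq:distS1>=_sparse2}). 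You instead split $w=u+v$ with $\supp(u)\subset\iota$, apply \cref{eq:SRIP_half} directly to $u\in A_s$ to get $\|Au\|\ge\sqrt{\nu_s}\|u\|\to\infty$, and finish with the triangle inequality plus the observation $\|A^\dagger z\|\ge\|z\|/\|A\|$ for the case $Q=(AA^\T)^{-1}$. Your route is more elementary (it bypasses \cref{lemma:rank} entirely) and uses the hypothesis more directly; the paper's route is the one that generalizes to the LCP setting of \cref{thm:global_lcp_nondegenerate}, where the analogous bound comes from \cref{lemma:SRIP_lcp} rather than from a sparsity pattern. You are also somewhat more careful than the paper at the endpoint $\lambda=1/L_Q$ for PDMC, where you correctly route the subsequential claim through \cref{cor:pdmc_g_convex} and justify nonexpansiveness of the individual operators by the $\tfrac{1}{1+\lambda}$-Lipschitz continuity of $\prox_{\lambda g}$; the paper simply cites \cref{thm:global_pdmc_full}. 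One cosmetic point: in the case analysis for coercivity, the clean formulation is the paper's contradiction setup (assume $V_\iota(w^k)$ bounded along a subsequence, deduce $v^k$ bounded and $u^k\to\infty$), since ``$\|v^k\|\to\infty$ along a subsequence'' by itself only gives divergence of $V_\iota$ along that subsequence; this is a phrasing issue, not a gap.
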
 
\begin{proof}
	Using \cref{thm:global_pdmc_full}, 
it suffices to prove that $V_{\iota} 
	\coloneqq f_Q+\dist 
	(\cdot,R_{\iota})^2/2$ is 
	coercive for all $\iota \in \I_s$.
	That is, given any
	$\{w^k\}$ such that $\|w^k\|\to \infty$, we need to show 
	that $V_{\iota}(w^k)\to \infty$. Suppose otherwise, then 
	$\{V_{\iota}(w^k)\}$ must have a bounded subsequence, and 
	we assume without loss of generality that the whole sequence is 
	bounded. Since
		\begin{equation}
		V_{\iota}(w^k)  =  f_Q (w^k)+ 
		\frac{1}{2}\|w^k - 
		P_{R_{\iota}}(w^k)\|^2
		=  f_Q(w^k) + 
		\frac{1}{2}\|(w^k)_{\iota^c}\|^2,
		\label{eq:lyapunov_mave_coercive_sparse}
	\end{equation} 
	$\{(w^k)_{\iota^c}\}$ must be bounded, and hence
	$\|(w^k)_{\iota}\|\to \infty$ since we are given that $\|w^k\|\to \infty$. 
	Meanwhile, for $Q=(AA^\T)^{-1}$, we have 
	\begin{align}
		\nonumber  f_Q(w^k) & = \frac{1}{2}	\|(A^{\dagger}A)_{:,\iota} 
		(w^k)_{\iota} + (A^{\dagger}A)_{:,\iota^c} 
		(w^k)_{\iota^c} - A^{\dagger}b\|^2 
		\\ \nonumber
		& \geq
		\frac{1}{2} 	\lambda_{\min} \left( (A^{\dagger}A)_{:,\iota} ^\T 
			(A^{\dagger}A)_{:,\iota} \right) \|(w^k)_{\iota}\|^2 
			+\frac{1}{2} \| (A^{\dagger}A)_{:,\iota^c} 
			(w^k)_{\iota^c} - A^{\dagger}b\|^2  
			\\ 
		\label{eq:distS1>=_sparse}
			&\quad - \|(A^{\dagger}A)_{:,\iota} \| \cdot 
			\|(w^k)_{\iota}\| 
			\cdot 
			\| (A^{\dagger}A)_{:,\iota^c} 
			(w^k)_{\iota^c} - A^{\dagger}b \|
	\end{align}
	On the other hand, if $Q=\Id$, we obtain by a similar computation that 
		\ifdefined\submit 
		\begin{multline}
		f_Q(w^k) \geq \frac{1}{2}\lambda_{\min}(A_{:,\iota}^\T A_{:,
			\iota})\|(w^k)_{\iota}\|^2
		+ \frac{1}{2} \| A_{:, \iota^c} (w^k)_{\iota^c} -b\|^2
		\\ -\|A_{:,\iota}\| \cdot 	
		\|(w^k)_{\iota}\| 
		\cdot \| A_{:,\iota^c}  (w^k)_{\iota^c} - b \|.
		\label{eq:distS1>=_sparse2}
		\end{multline}
		\else 
		\begin{equation}
			f_Q(w^k) \geq \frac{1}{2}\lambda_{\min}(A_{:,\iota}^\T A_{:,
			\iota})\|(w^k)_{\iota}\|^2
		+ \frac{1}{2} \| A_{:, \iota^c} (w^k)_{\iota^c} -b\|^2
			-\|A_{:,\iota}\| \cdot 	
			\|(w^k)_{\iota}\| 
		\cdot \| A_{:,\iota^c}  (w^k)_{\iota^c} - b \|.
		\label{eq:distS1>=_sparse2}
		\end{equation}
		\fi 
	By \cref{eq:SRIP_half}, it is clear that $\rank (A_{:,
	\iota})=\abs{\iota}$. Thus, by \cref{lemma:rank}, $	\lambda_{\min} \left( 
	(A^{\dagger}A)_{:,\iota} ^\T 
(A^{\dagger}A)_{:,\iota} \right) >0$ and $\lambda_{\min}(A_{:,\iota}^\T 
A_{:,\iota})>0$. Letting $k\to \infty$ in \cref{eq:distS1>=_sparse} 
and \cref{eq:distS1>=_sparse2}, we then obtain that $f_Q(w^k) \to \infty$, and so 
by \cref{eq:lyapunov_mave_coercive_sparse}, $V_{\iota}(w^k)\to \infty$, which 
is a contradiction. Hence, $V_{\iota}$ is coercive, as desired.
		\ifdefined\submit
		\qed
		\else
		\qedhere
		\fi
\end{proof}

We now show $Q$-linear convergence of the PS algorithm for solving
\cref{eq:unified_sparseaffine2}.

\begin{theorem}
	\label{thm:global_ps_affine}
	Consider the setting of \cref{thm:global_pdmc_fb_sparse}.
	Then any sequence $\{w^k\}$ generated by
	\cref{eq:ps_affine_ucs} with $\lambda \in (0,1/L_Q)$ has an accumulation point $w^*\in \Fix (\Tps)$, and if $\Tps$ is single-valued at $w^*$, then the algorithm converges to
	$w^*$ at a local $Q$-linear rate.
\end{theorem}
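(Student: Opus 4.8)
The plan is to mirror the argument of \cref{thm:global_pdmc_fb_sparse}, but now invoking the full-sequence result \cref{thm:global_pdmc_full}~(b) in its $h\equiv0$ form together with the linear-rate statement \cref{prop:linear_ps}, applied to the instance $f=f_Q$, $h\equiv0$, $g_j=\delta_{R_{\iota}}$ with the convex pieces $R_{\iota}$, $\iota\in\I_s$, of \cref{eq:I_Ri_sparse}. Since $f_Q$ is convex with $L_Q$-Lipschitz gradient \cref{eq:grad_f_Q,eq:LQ}, $Id-\lambda\nabla f_Q$ is nonexpansive for every $\lambda\in(0,1/L_Q)$ by \cref{remark:specialcases}~(a), and $\delta_{R_{\iota}}$ is $\rho$-convex with $\rho=0$; hence, once \cref{assum:min_convex} is verified, \cref{thm:global_pdmc} gives boundedness of $\{w^k\}$ with accumulation points in $\Fix(\Tps)$, and \cref{thm:global_pdmc_full}~(b) upgrades this to $w^k\to w^*$ whenever $\Tps$ is single-valued at $w^*$. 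It then only remains to supply the contraction estimate required by \cref{prop:linear_ps}.

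For \cref{assum:min_convex}, the one nontrivial item is the coerciveness in (e), and this is immediate from what is already proved: pointwise we have $f_Q+\delta_{R_{\iota}}\ge f_Q+\tfrac12\dist(\cdot,R_{\iota})^2=V_{\iota}$, and $V_{\iota}$ was shown coercive in the proof of \cref{thm:global_pdmc_fb_sparse} under \cref{eq:SRIP_half}; so $f_Q+\delta_{R_{\iota}}$ is coercive for each $\iota\in\I_s$, and therefore so is $f_Q+\delta_{S_2}=\min_{\iota\in\I_s}(f_Q+\delta_{R_{\iota}})$. This settles the existence of an accumulation point $w^*\in\Fix(\Tps)$ and the full-sequence convergence under the single-valuedness hypothesis.

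For the local $Q$-linear rate I would verify that $Id-\lambda\nabla f_Q$ is a contraction on each $R_{\iota}$. Fix $\iota\in\I_s$, write $N\coloneqq A^\T Q A$ (so $L_Q=\|N\|$ and $N\succeq0$), and take $w,w'\in R_{\iota}$; then $v\coloneqq w-w'\in R_{\iota}$ and $(Id-\lambda\nabla f_Q)w-(Id-\lambda\nabla f_Q)w'=(Id-\lambda N)v$. Using $N\preceq L_Q\Id$, hence $N^2\preceq L_QN$, we get
\begin{align*}
\|(Id-\lambda N)v\|^2&=\|v\|^2-2\lambda\,v^\T Nv+\lambda^2\,v^\T N^2v\\
&\le\|v\|^2-\lambda(2-\lambda L_Q)\,v^\T Nv\le\|v\|^2-\lambda\,v^\T Nv,
\end{align*}
where the last step uses $\lambda L_Q<1$. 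Writing $v=\Id_{:,\iota}u$, we have $v^\T Nv=(A_{:,\iota}u)^\T Q(A_{:,\iota}u)\ge c_{\iota}\|v\|^2$ with $c_{\iota}\coloneqq\lambda_{\min}(A_{:,\iota}^\T QA_{:,\iota})>0$, positivity holding because \cref{eq:SRIP_half} forces $\rank(A_{:,\iota})=\abs{\iota}$ and $Q\succ0$ (cf.\ \cref{lemma:rank}). Hence $\|(Id-\lambda N)v\|^2\le(1-\lambda c_{\iota})\|v\|^2$ with $\lambda c_{\iota}\le\lambda L_Q<1$, so $\kappa_{\iota}\coloneqq\sqrt{1-\lambda c_{\iota}}\in[0,1)$ is an admissible constant in \cref{prop:linear_ps}, yielding the local $Q$-linear convergence of $\{w^k\}$ to $w^*$.

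The main obstacle is precisely this contraction estimate: the operator $Id-\lambda N$ does not in general preserve the subspace $R_{\iota}$, so one cannot merely bound a restricted operator norm and must instead argue through the quadratic form $v^\T Nv$, combining the global spectral bound $\lambda L_Q<1$ with the $\iota$-restricted positive definiteness of $A^\T QA$ coming from the sparse near-injectivity hypothesis \cref{eq:SRIP_half}. Everything else is a routine specialization of the general machinery already developed.
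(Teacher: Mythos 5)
Your proposal is correct and follows essentially the same route as the paper: coercivity of the piecewise objectives (which the paper verifies by showing $f_Q$ is coercive over each $R_\iota$, and you obtain more quickly from $f_Q+\delta_{R_\iota}\ge V_\iota$ with $V_\iota$ already proved coercive), followed by an application of \cref{prop:linear_ps} after checking that $Id-\lambda\nabla f_Q$ is a contraction on each $R_\iota$. The only difference is cosmetic: the paper derives the contraction constant $\kappa_\iota$ by treating $Q=(AA^\T)^{-1}$ and $Q=\Id$ separately through a lower bound on $\|\nabla f_Q(w)-\nabla f_Q(w')\|$, whereas you handle both cases uniformly via $N^2\preceq L_Q N$ and the restricted positive definiteness of $A_{:,\iota}^\T QA_{:,\iota}$, arriving at an equally valid $\kappa_\iota\in[0,1)$.
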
 
\begin{proof}
	Given any $\iota\in \I_s$ and any sequence 
	$\{w^k\} $ that lies in 
	$R_{\iota}$ such that $\|w^k\|\to \infty$, clearly $(w^k)_{\iota^c} = 0$ 
	for all $k$. Consequently, by noting that $\lambda_{\min}\left( 
	(A^{\dagger}A)_{:,
		\iota}^\T 
	(A^{\dagger}A)_{:,\iota} \right) $ and $\lambda_{\min}\left( (A^\T 
	A)_{:,\iota}^\T (A^\T 
	A)_{:,\iota} \right)$ are both strictly positive from the 
	proof of \cref{thm:global_pdmc_fb_sparse}, we obtain from 
	\cref{eq:distS1>=_sparse,eq:distS1>=_sparse2} that $f_Q(w^k)\to \infty$. Thus, $f_Q$ 
	is coercive over $R_{\iota}$, showing that \cref{assum:min_convex} (e) is 
	fulfilled. To complete the proof, by \cref{prop:linear_ps}, it suffices to show 
	that $Id-\lambda \nabla f_Q$ is a contraction over 
	$R_{\iota}$. Suppose that $Q=(AA^\T)^{-1}$ and $w,w'\in 
	R_{\iota}$, then 
	\begin{align}
		\| \nabla f_Q(w) - \nabla f_Q(w')\|^2 &
		=  \| (A^{\dagger}A)_{:,\iota} (w-w')_{\iota}\|^2 \nonumber  
		\\
		& \geq  \lambda_{\min}\left( (A^{\dagger}A)_{:,\iota}^\T 
		(A^{\dagger}A)_{:,\iota} \right)  
		\|(w-w')_{\iota}\|^2  \nonumber  \\
		 & =  \lambda_{\min}\left( (A^{\dagger}A)_{:,\iota}^\T 
		(A^{\dagger}A)_{:,\iota} \right)  
		\|w-w'\|^2.
		 \label{eq:Lipconst1}
	\end{align}
	Similarly, for $Q=\Id$ and $w,w'\in R_{\iota}$, we have
	\begin{equation}
		\| \nabla f_Q(w) - \nabla f_Q(w')\|^2 
		\geq \lambda_{\min}\left( (A^\T A)_{:,\iota}^\T (A^\T 
		A)_{:,\iota} \right)\|w-w'\|^2.
		\label{eq:Lipconst2}
	\end{equation}
	By \cref{eq:LQ} and the Lipschitz continuity of $\nabla f_Q$,
	\cref{eq:Lipconst1,eq:Lipconst2} further lead to
	\[ \| \left( w - \lambda \nabla f(w)\right) - \left(w' - \lambda \nabla 
	f(w') \right)\| \leq \kappa_{\iota} \|w-w'\|,  \quad \forall w,w'\in 
	R_{\iota} ,\]
	where 
	\[\kappa_{\iota}^2 = \begin{cases}
		1+(\lambda^2-2\lambda) \lambda_{\min}\left( (A^{\dagger}A)_{:,
			\iota}^\T 
		(A^{\dagger}A)_{:,\iota} \right)   & \text{if }Q=(AA^\T)^{-1}, \\
		1+(\lambda^2-2\lambda \|A\|^{-2})  \lambda_{\min}\left( (A^\T 
		A)_{:,\iota}^\T (A^\T ,
		A)_{:,\iota} \right) & \text{if }Q=\Id.
	\end{cases}\]
	Since the second term is negative for $\lambda \in (0,1/L_Q)$, $\kappa_{\iota}\in
	[0,1)$ and the conclusion follows. 
		\ifdefined\submit
		\qed
		\else
		\qedhere
		\fi
\end{proof}

\subsection{Linear complementarity problems and general absolute value
equations}\label{sec:lcp}
We now turn our attention to  the \emph{linear complementarity problem 
(LCP)} described in \cref{eq:LCP} and consider the feasibility problem
reformulation \cref{eq:feasibilityproblem} with
\cref{eq:S1_lcp}.
We note that $A$ given in \cref{eq:S1_lcp} has full row rank for any matrix 
$M$. 
Observe that $S_1$ is an affine set 
and $S_2$  also has a sparsity structure
such that $S_2 \subset
A_n$. However, $S_2$ has additional 
properties that distinguishes it from $A_n$,  including the nonnegativity of 
its vectors as well as the complementarity between $(w_1,\dots,w_n)$ and 
$(w_{n+1},\dots,w_{2n})$. 
	
As 
shown in~\cite[Proposition 2.2]{ACT21}, $z\in P_{S_2}(w)$ if and 
only if
\begin{equation*}\label{eq:PC2}
(z_j,z_{n+j}) \in \begin{cases}
\{ (0,(w_{n+j})_+)\} & \text{if}~ w_j<w_{n+j}, \\
\{ ((w_{j})_+,0)\} & \text{if}~ w_j>w_{n+j}, \\
\{ (0,(w_{n+j})_+), ((w_{j})_+,0)\} & \text{if}~ w_j=w_{n+j},\\
\end{cases} \quad \forall j \in [n].
\end{equation*}
We also get from 
\cite[Section 3.1]{ACT21} that
$S_2$ can be decomposed as a union of closed convex sets:
\begin{equation}
	S_2 = \bigcup _{\iota \in \I} R_{\iota}, \quad \text{ where }
	R_{\iota} \coloneqq 
\Ran(\Id_{:,\iota}) \cap \Re^{2n}_+,
	\label{eq:S2unionconvexlcp}
\end{equation}
where $\Re^{2n}_+$ denotes the set of nonnegative vectors in $\Re^{2n}$,
and $\I$ is the set of all  
$\iota \subset [2n]$ 
expressible as $\iota = \Lambda_1 
\cup \Lambda_2$ for some $\Lambda_1 \subset [n]$ and $\Lambda_2 = \{ n+j : 
j \in [n], j \notin \Lambda_1\}$.
It is also clear
that for any $\iota \in \I$ and $w\in \Re^{2n}$, the projection 
$z$ of $w$ onto $R_{\iota}$ is given by
\begin{equation}\label{eq:PRtau}
(z_j, z_{n+j}) = \begin{cases}
((w_j)_+,0) & \text{if}~j\in \iota \\
(0,(w_{n+j})_+) & \text{if}~j \notin \iota 
\end{cases}, \quad \forall j\in [n].
\end{equation}

\subsubsection{LCPs involving nondegenerate and $P$-matrices}
\label{sec:lcp_nondegenerate}
In \cref{sec:SAFP}, the
condition \cref{eq:SRIP_half} relaxed from SRIP was used to establish
the convergence of 
the algorithms \crefrange{eq:pdmc_affine_ucs}{eq:ps_affine_ucs}. For the feasibility reformulation of LCP, a property similar to \cref{eq:SRIP_half} 
can be obtained 
through assumptions on $M$ that are conventional in the LCP literature. 

\begin{definition}
	A matrix $M\in \Re^{n\times n}$ is said to be \emph{nondegenerate} 
	if all of its principal minors are 
	nonzero.
\end{definition}

\begin{lemma}
	\label{lemma:SRIP_lcp}
	Let $M\in \Re^{n\times n}$ be a nondegenerate matrix, and
	$A=\begin{bmatrix}
	M & -\Id
	\end{bmatrix}$. Then
	for $S_2$ given by \cref{eq:S1_lcp}, there exists $\nu>0$ such that 
		\[\nu \|w\|^2 \leq \| A w\|^2, \quad \forall w\in S_2\cup (-S_2).\]
\end{lemma}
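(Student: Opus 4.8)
The plan is to localize the claimed inequality to each convex piece $R_\iota$ in the decomposition $S_2=\bigcup_{\iota\in\I}R_\iota$ from \cref{eq:S2unionconvexlcp}, and on each piece reduce it to the nonsingularity of a square submatrix of $A$ that follows from the nondegeneracy of $M$. Since the desired estimate is positively homogeneous and the complement set $-S_2$ is handled by replacing $w$ with $-w$, it suffices to produce, for every $\iota\in\I$, a constant $\nu_\iota>0$ with $\norm{Aw}^2\ge\nu_\iota\norm{w}^2$ for all $w\in R_\iota$, and then take $\nu\coloneqq\min_{\iota\in\I}\nu_\iota$, which is positive because $\I$ is finite.

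First I would fix $\iota\in\I$, say $\iota=\Lambda_1\cup\Lambda_2$ with $\Lambda_1\subset[n]$ and $\Lambda_2=\{n+j:j\in[n]\setminus\Lambda_1\}$, so that $\abs{\iota}=n$. Since $R_\iota\subset\Ran(\Id_{:,\iota})$, every $w\in R_\iota$ can be written as $w=\Id_{:,\iota}u$ for a unique $u\in\Re^n$, and then $\norm{w}=\norm{u}$ and $Aw=A_{:,\iota}u$, where the $n\times n$ matrix $A_{:,\iota}$ has as columns the vectors $M_{:,k}$ for $k\in\Lambda_1$ together with the unit vectors $-e_j$ for $j\in[n]\setminus\Lambda_1$. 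The crux is the claim that $A_{:,\iota}$ is nonsingular. To prove it I would show $\Ker(A_{:,\iota})=\{0\}$: if $A_{:,\iota}u=0$, decompose the corresponding $w=\Id_{:,\iota}u$ as $(x,y)\in\Re^n\times\Re^n$, so that $x$ is supported on $\Lambda_1$, $y$ is supported on $\Lambda_1^c$, and $Aw=Mx-y=0$. Restricting the identity $y=Mx$ to the rows indexed by $\Lambda_1$ and using the supports yields $M_{\Lambda_1,\Lambda_1}x_{\Lambda_1}=0$; nondegeneracy of $M$ (i.e. $\det M_{\Lambda_1,\Lambda_1}\neq0$) then forces $x_{\Lambda_1}=0$, hence $x=0$, hence $y=Mx=0$, hence $u=0$. (Equivalently, a Laplace expansion of $\det A_{:,\iota}$ along its $-e_j$ columns gives $\det A_{:,\iota}=\pm\det M_{\Lambda_1,\Lambda_1}\neq0$.)

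With $A_{:,\iota}$ nonsingular, $A_{:,\iota}^\T A_{:,\iota}$ is positive definite; set $\nu_\iota\coloneqq\lambda_{\min}(A_{:,\iota}^\T A_{:,\iota})>0$. Then for every $w\in R_\iota$ we have $\norm{Aw}^2=u^\T A_{:,\iota}^\T A_{:,\iota}u\ge\nu_\iota\norm{u}^2=\nu_\iota\norm{w}^2$. Taking $\nu\coloneqq\min_{\iota\in\I}\nu_\iota>0$ and recalling $S_2=\bigcup_{\iota\in\I}R_\iota$ gives $\nu\norm{w}^2\le\norm{Aw}^2$ for all $w\in S_2$. Finally, if $w\in-S_2$ then $-w\in S_2$, so $\norm{Aw}^2=\norm{A(-w)}^2\ge\nu\norm{-w}^2=\nu\norm{w}^2$, which completes the proof.

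I expect the only delicate point to be the kernel computation in the second step, namely the bookkeeping of supports that makes the restriction to rows $\Lambda_1$ collapse $Mx$ to the principal submatrix $M_{\Lambda_1,\Lambda_1}$ acting on $x_{\Lambda_1}$; everything after that — positive definiteness of a Gram matrix and a finite minimum — is routine. Note this is exactly where the nondegeneracy hypothesis enters and cannot be weakened, since it is precisely the nonvanishing of the minor $\det M_{\Lambda_1,\Lambda_1}$ over all $\Lambda_1\subset[n]$ that is needed to cover all indices $\iota\in\I$.
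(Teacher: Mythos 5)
Your proof is correct and follows essentially the same route as the paper: restrict to each convex piece $R_\iota$, bound $\norm{Aw}^2$ below by $\lambda_{\min}(A_{:,\iota}^\T A_{:,\iota})\norm{w}^2$, and take the minimum over the finite index set $\I$. The only difference is that the paper obtains the nonsingularity of the square matrix $A_{:,\iota}$ by citing \cite[Lemma 2.10]{ACT21}, whereas you prove it directly via the kernel computation reducing to $\det M_{\Lambda_1,\Lambda_1}\neq 0$ -- a self-contained argument that is correct, including the edge cases $\Lambda_1=\emptyset$ and $\Lambda_1=[n]$.
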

\begin{proof}
	If $\iota \in \I$ and $w\in R_{\iota}\cup (-R_{\iota})$,
		\begin{equation*}
			\|Aw\|^2 = \|A_{:,\iota} w_{\iota}\|^2 \geq 
			\lambda_{\min} (A_{:,\iota}^\T A_{:,
				\iota})\|w_{\iota}\|^2 = \lambda_{\min} 
				(A_{:,
				\iota}^\T A_{:,
				\iota})\|w\|^2 = \nu_{\iota} 
				\|w\|^2,
		\end{equation*}
	where $ \nu_{\iota}  
	=\lambda_{\min} (A_{:,
	\iota}^\T A_{:,\iota})$.  Meanwhile, nondegeneracy 
	of $M$ implies that the square matrix $A_{:,\iota}$ is nonsingular by 
	\cite[Lemma 2.10]{ACT21}, so $\nu_{\iota} > 0$. By taking $\nu = 
	\min_{\iota \in \I}\nu_{\iota}$ and noting
	\cref{eq:S2unionconvexlcp}, we get the desired inequality.
		\ifdefined\submit
		\qed
		\else
		\qedhere
		\fi
\end{proof}
With the above lemma, we can easily obtain convergence for
\cref{eq:pdmc_affine_ucs,eq:fb_affine_ucs,eq:ps_affine_ucs} on the feasibility reformulation of LCPs.

\begin{theorem}
	\label{thm:global_lcp_nondegenerate}
	Let $M\in \Re^{n \times n}$ be a nondegenerate matrix, $b\in \Re^n$, $A=\begin{bmatrix}
		M & -\Id
	\end{bmatrix}$, $Q\in \{ (AA^\T)^{-1},\Id\}$, and $f_Q$ and $L_Q$ be 
	given by \cref{eq:f_Q} and \cref{eq:LQ}, respectively, then
	for \cref{eq:feasibilityproblem} with \cref{eq:S1_lcp}:
		\begin{enumerate}[(a)]
			\item  Any 
			sequence $\{w^k\}$ generated by
			\cref{eq:pdmc_affine_ucs} with $\lambda \in (0,1/L_Q]$ has an accumulation point $w^*\in \Fix (\Tpdmc)$, and full sequence convergence holds if $\Tpdmc$ is single-valued at $w^*$.
			
			\item Any sequence $\{w^k\}$ generated by
			\cref{eq:fb_affine_ucs} with $\lambda \in (0,1/L_Q)$ has an accumulation point $w^*\in \Fix (\Tfb)$, and full sequence convergence holds if $\Tfb$ is single-valued at $w^*$.
			
			\item Any 
			sequence $\{w^k\}$ generated by
			\cref{eq:ps_affine_ucs} with $\lambda \in (0,1/L_Q)$ has an accumulation point $w^*\in \Fix (\Tps)$, and full sequence convergence holds if $\Tps$ is single-valued at $w^*$. Moreover, the convergence rate is locally linear.
		\end{enumerate}
\end{theorem}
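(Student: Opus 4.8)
The plan is to obtain all three assertions as applications of \cref{thm:global_pdmc_full} (for the full-sequence convergence) and \cref{prop:linear_ps} (for the linear rate in (c)), reusing the coercivity computations from the proofs of \cref{thm:global_pdmc_fb_sparse,thm:global_ps_affine}, the only change being that the relaxed restricted isometry property \cref{eq:SRIP_half} assumed there is now supplied for free by \cref{lemma:SRIP_lcp}. As noted in the remark preceding \cref{sec:SAFP}, for all three reformulations \cref{eq:pdmc_affine_ucs,eq:fb_affine_ucs,eq:ps_affine_ucs} we have $|I|=1$, $f_Q$ convex with $L_Q$-Lipschitz gradient by \cref{eq:grad_f_Q,eq:LQ}, $Q$ symmetric positive definite, and $S_2 = \bigcup_{\iota\in\I} R_\iota$ a finite union of closed convex sets by \cref{eq:S2unionconvexlcp}; hence \cref{assum:min_convex} holds except possibly for (e), and by \cref{remark:specialcases}~(a) it remains only to establish the relevant coerciveness (and, for (c), a local contraction property of $Id-\lambda\nabla f_Q$).

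For (a) and (b), I would show that $V_\iota \coloneqq f_Q + \tfrac12\dist(\cdot,R_\iota)^2$ is coercive for every $\iota\in\I$; this is precisely the term $f_i+g_j-h_m$ (resp.\ $f_i+g_j$) appearing in \cref{assum:min_convex}~(e) for the PDMC (resp.\ FB) reformulation. Arguing by contradiction, suppose $\|w^k\|\to\infty$ while $\{V_\iota(w^k)\}$ is bounded. Positive definiteness of $Q$ gives $f_Q(w)\ge\tfrac12\lambda_{\min}(Q)\|Aw-b\|^2$, so $\{Aw^k\}$ is bounded; and boundedness of $\tfrac12\|w^k-z^k\|^2$ with $z^k\coloneqq P_{R_\iota}(w^k)$ forces $\|z^k\|\to\infty$ while $\{Az^k\}$ stays bounded because $\|A(w^k-z^k)\|\le\|A\|\,\|w^k-z^k\|$. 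Since $z^k\in R_\iota\subseteq S_2$, \cref{lemma:SRIP_lcp} yields $\nu\|z^k\|^2\le\|Az^k\|^2$, contradicting $\|z^k\|\to\infty$. Thus each $V_\iota$ is coercive, and \cref{thm:global_pdmc_full} applies exactly as in \cref{thm:global_pdmc_fb_sparse} (the endpoint $\lambda=1/L_Q$ in (a) being admissible because the regularizer $g=\tfrac12\|\cdot\|^2$ is convex; cf.\ \cref{cor:pdmc_g_convex} and \cref{remark:lambda_full}), giving the stated accumulation point $w^*$ in the respective fixed-point set and full-sequence convergence whenever the operator is single-valued at $w^*$.

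For (c), the PS reformulation \cref{eq:unified_sparseaffine2} has $g=\delta_{S_2}=\min_{\iota}\delta_{R_\iota}$, so \cref{assum:min_convex}~(e) reduces to coerciveness of $f_Q$ over each $R_\iota$: for $w\in R_\iota\subseteq S_2$ we have $\|Aw\|^2\ge\nu\|w\|^2$ by \cref{lemma:SRIP_lcp}, hence $f_Q(w)\ge\tfrac12\lambda_{\min}(Q)(\sqrt{\nu}\,\|w\|-\|b\|)^2\to\infty$. Then \cref{thm:global_pdmc_full}~(b) gives the accumulation point $w^*\in\Fix(\Tps)$ and full-sequence convergence under single-valuedness at $w^*$, and for the local $Q$-linear rate I would invoke \cref{prop:linear_ps}, which only demands that $Id-\lambda\nabla f_Q$ be a contraction on each $R_\iota$. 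For $w,w'\in R_\iota$ the difference $w-w'$ lies in the subspace $\Ran(\Id_{:,\iota})$, so $A(w-w')=A_{:,\iota}(w-w')_\iota$ and $\nabla f_Q(w)-\nabla f_Q(w')=(A^\T Q A)_{:,\iota}(w-w')_\iota$; since $M$ is nondegenerate, $A_{:,\iota}$ is nonsingular by \cite[Lemma~2.10]{ACT21}, so \cref{lemma:rank} gives $\lambda_{\min}\bigl((A^\T Q A)_{:,\iota}^\T(A^\T Q A)_{:,\iota}\bigr)>0$, and the computation ending the proof of \cref{thm:global_ps_affine} yields a contraction constant $\kappa_\iota\in[0,1)$ for $\lambda\in(0,1/L_Q)$.

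The main obstacle I anticipate is the coerciveness step for (a) and (b): unlike the sparse case of \cref{thm:global_pdmc_fb_sparse}, where $R_\iota$ is a linear subspace and $\dist(\cdot,R_\iota)^2$ collapses to $\|\cdot_{\iota^c}\|^2$, here $R_\iota$ is only a convex cone, so one cannot read off the ``large'' coordinates directly and must instead pass through the projection point $z^k=P_{R_\iota}(w^k)$ and exploit that it lies in $S_2$ before \cref{lemma:SRIP_lcp} becomes applicable. Once this is in place, the remaining arguments are faithful transcriptions of \cref{thm:global_pdmc_fb_sparse,thm:global_ps_affine}.
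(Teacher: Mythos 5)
Your proposal is correct, and its overall skeleton coincides with the paper's: both reduce the theorem to \cref{thm:global_pdmc_full} (with \cref{cor:pdmc_g_convex} and \cref{remark:lambda_full} covering the endpoint $\lambda=1/L_Q$ in part (a)) and to \cref{prop:linear_ps} for the linear rate, with \cref{lemma:SRIP_lcp} supplying the restricted injectivity that replaces \cref{eq:SRIP_half}. The one place where you genuinely diverge is the coercivity of $V_\iota = f_Q + \tfrac12\dist(\cdot,R_\iota)^2$. The paper uses the explicit projection formula \cref{eq:PRtau} to get $V_\iota(w) \ge f_Q(w) + \tfrac12\|w_{\iota^c}\|^2$, which reproduces the situation of \cref{eq:lyapunov_mave_coercive_sparse} exactly and lets it reuse the coordinate-block lower bounds \cref{eq:distS1>=_sparse,eq:distS1>=_sparse2} verbatim. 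You instead pass through the projection point $z^k = P_{R_\iota}(w^k)$, note that $\{Aw^k\}$ and hence $\{Az^k\}$ stay bounded while $\|z^k\|\to\infty$, and contradict \cref{lemma:SRIP_lcp} directly; this correctly sidesteps your stated worry that $R_\iota$ is a cone rather than a subspace, avoids coordinates entirely, and would extend to any union convex $S_2$ on which a bound of the form $\nu\|w\|^2\le\|Aw\|^2$ holds, without requiring a closed-form projection. What the paper's route buys in exchange is a completely literal reduction to the SAFP proofs, so that the LCP case adds only the one-line inequality \cref{eq:Vcoercive_nondegenerate}. Both arguments are sound, and your treatment of part (c) (contraction of $Id-\lambda\nabla f_Q$ on $R_\iota$ via $w-w'\in\Ran(\Id_{:,\iota})$, nonsingularity of $A_{:,\iota}$ from nondegeneracy of $M$, and \cref{lemma:rank}) matches the computation the paper imports from \cref{thm:global_ps_affine}.
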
 
\begin{proof}
	Let $\iota\in\I$.
	We define $V_{\iota} \coloneqq f_Q + \dist(\cdot, 
	R_{\iota})^2/2$, and see from \cref{eq:PRtau} that 
		\begin{equation}
			V_{\iota}(w) = f_Q (w)+ \frac{1}{2}\|w_{\iota} - 
			[w_{\iota}]_+\|^2 + \frac{1}{2} 
			\|w_{\iota^c}\|^2 
			\geq f_Q(w) + \frac{1}{2} \|w_{\iota^c}\|^2.
			\label{eq:Vcoercive_nondegenerate}
		\end{equation}
	Using \cref{eq:Vcoercive_nondegenerate} and \cref{lemma:SRIP_lcp}, the rest of the proof follows from 
	arguments analogous to those 
	in the proofs of \cref{thm:global_pdmc_fb_sparse,thm:global_ps_affine}. 
		\ifdefined\submit
		\qed
		\else
		\qedhere
		\fi
\end{proof}

For a special class of nondegenerate matrices, known as $P$-matrices,
we can obtain finer results.
\begin{definition}
	\label{def:P}
	A matrix $M\in \Re^{n\times n}$ is said to be a \emph{$P$-matrix} if all of
	its principal minors are positive. 
\end{definition}

It is known that \cref{eq:LCP} has a unique solution for any $b\in 
\Re^n$ when $M$ is a $P$-matrix \citep[Theorem 3.3.7]{CPS92}. Consequently, 
$S_1\cap S_2$ contains a single point when $M$ is a $P$-matrix for
$S_1$ and $S_2$ defined in \cref{eq:S1_lcp}. Some important 
applications of LCP involving $P$-matrices can be found in \cite{Schafer04}. 
For $P$-matrices, we derive the following nice result on the
characterization of fixed points. The proof of this result is quite
technical and heavily relies on a special property of $P$-matrices
described in \cref{lemma:Pmatrix}, and thus we defer it to
\cref{app:fixed_Pmatrix}.

\begin{theorem}
	\label{thm:fixedpoints_Pmatrix}
	Consider the setting of \cref{thm:global_lcp_nondegenerate}.
	If $M$ is a $P$-matrix, then
	\[ \Fix \left(\Tpdmc\right) = \Fix \left(T_{\rm
		FB}^{\lambda}\right) = \Fix 
	\left(T_{\rm PS}^{\lambda}\right)  = S_1\cap S_2, \quad \forall \lambda \in (0, 1/L_Q].\]
\end{theorem}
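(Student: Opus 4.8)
The plan is to establish the two inclusions separately. Throughout I would write $w=(x,y)\in\Re^n\times\Re^n$ and denote by $r(w)\coloneqq Aw-b=Mx-y-b$ the affine residual, and recall from \cref{eq:grad_f_Q} that $\nabla f_Q(w)=A^{\T}Q\,r(w)$; since $Q$ is invertible and $A$ has full row rank, $\nabla f_Q(w)=0$ is equivalent to $r(w)=0$, which in turn is equivalent to $w\in S_1$. The inclusion $S_1\cap S_2\subseteq\Fix(T)$ for $T\in\{\Tpdmc,\Tfb,\Tps\}$ is direct: if $w\in S_1\cap S_2$ then $\nabla f_Q(w)=0$, and $w\in S_2$ with $S_2$ closed gives $w\in P_{S_2}(w)$, so substituting these two facts into \cref{eq:pdmc_affine_ucs,eq:fb_affine_ucs,eq:ps_affine_ucs} yields $w\in T(w)$ in each case, for every $\lambda>0$.

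For the reverse inclusion I would first reduce matters to showing that $r(w)=0$ for every fixed point $w$. A fixed point of $\Tps$ automatically lies in $S_2$, so $r(w)=0$ then gives $w\in S_1\cap S_2$. For $\Tpdmc$ (resp.\ $\Tfb$), cancelling the unconstrained $w$-term in the fixed-point equation shows that $p\coloneqq w+\nabla f_Q(w)$ lies in $P_{S_2}(w)$ (resp.\ in $P_{S_2}(w-\lambda\nabla f_Q(w))$); hence $r(w)=0$ forces $p=w\in S_2$, and again $w\in S_1\cap S_2$. In all three cases, writing $p$ for this projected point ($p=w$ for $\Tps$, and $p=w+\nabla f_Q(w)$ for $\Tpdmc$ and $\Tfb$), we have $p\in S_2=\bigcup_{\iota}R_{\iota}$, so $p\in R_{\iota}$ for some $\iota$; since $R_{\iota}$ is convex and contained in $S_2$, $p$ is the unique projection of the relevant pre-image onto $R_{\iota}$ (cf.\ \cref{lemma:moreau_prox}), and the projection inequality then yields $-\nabla f_Q(w)\in N_{R_{\iota}}(p)$. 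Because $N_{R_{\iota}}$ is a cone, this inclusion — and hence the whole fixed-point characterization — does not depend on the value of $\lambda$, which is why the claim holds for all $\lambda\in(0,1/L_Q]$.

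The remaining task is to deduce $r(w)=0$ from $-\nabla f_Q(w)\in N_{R_{\iota}}(p)$. Setting $v\coloneqq Q\,r(w)$ and using $A=\begin{bmatrix}M & -\Id\end{bmatrix}$, one has $-\nabla f_Q(w)=(-M^{\T}v,\,v)$; unpacking the inclusion coordinatewise against the explicit block description of $R_{\iota}$ and its normal cone from \cref{eq:S2unionconvexlcp,eq:PRtau} — namely, for each $j$, either $p_j\ge 0,\ p_{n+j}=0$ or $p_j=0,\ p_{n+j}\ge 0$ — produces a system of sign conditions linking $v$, $M^{\T}v$, and the complementary components of $p$. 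Feeding this system into the $P$-matrix property of $M$ (equivalently of $M^{\T}$, since transposition preserves principal minors) recorded in \cref{lemma:Pmatrix} should force $v=0$, hence $r(w)=0$, and the argument closes by combining with \cref{thm:global_lcp_nondegenerate}.

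I expect this last combinatorial step to be the main obstacle: one must carefully track the sign conditions over the index partition defining $\iota$ and, for $\Tpdmc$ and $\Tfb$, cope with the normal cone being evaluated at the shifted point $p\neq w$ rather than at $w$ itself, which is exactly why the full argument is deferred to \cref{app:fixed_Pmatrix} and relies so heavily on the special structure of $P$-matrices in \cref{lemma:Pmatrix}. (The $P$-matrix hypothesis genuinely matters here: for merely nondegenerate $M$ one only gets \cref{thm:global_lcp_nondegenerate}, where fixed points need not be feasible.)
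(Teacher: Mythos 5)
Your overall architecture matches the paper's: both reduce everything to showing that $v\coloneqq Q(Aw-b)$ satisfies the sign-reversal hypothesis of \cref{lemma:Pmatrix}, using $\Ran(A^\T)=\Ker([\,\Id~~M^\T\,])$ to write $-\nabla f_Q(w)=(-M^\T v,\,v)$, and both handle the three operators by extracting a single relation of the form ``$p\in P_{S_2}(\bar w)$ with $\bar w-p\in\Ran(A^\T)$'' (the paper isolates this as the implication \cref{eq:regularity} and then verifies it for $\Tpdmc$, $\Tfb$, $\Tps$ exactly as you do).

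However, there is a genuine gap in the step where you pass from $p\in P_{S_2}(\bar w)$ to the normal-cone inclusion $-\nabla f_Q(w)\in N_{R_{\iota}}(p)$ for \emph{one} piece $R_{\iota}$ containing $p$, and then try to close the argument from that inclusion alone. Unpack $N_{R_{\iota}}(p)$ at an index $j$ with $p_j=p_{n+j}=0$ (a degenerate coordinate): only the coordinate of $\{j,n+j\}$ that belongs to $\iota$ is constrained to have a nonpositive normal-cone component, while the other coordinate is completely free (it corresponds to the equality constraint $q_k=0$ in $R_\iota$). Concretely, if $j\in\iota$ you get $(M^\T v)_j\ge 0$ but no sign on $v_j$; if $n+j\in\iota$ you get $v_j\le 0$ but no sign on $(M^\T v)_j$. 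In either case the product $v_j(M^\T v)_j$ cannot be signed, so \cref{lemma:Pmatrix} cannot be invoked, and this is exactly the degenerate case in which the theorem is hardest. The missing ingredient is that $p$ minimizes the distance to $\bar w$ over the \emph{whole union} $S_2$, hence over \emph{every} piece $R_{\iota'}$ containing $p$; at a doubly-zero coordinate both assignments of $j$ versus $n+j$ to $\iota'$ are admissible, so one obtains \emph{both} $\bar w_j-p_j\le 0$ and $\bar w_{n+j}-p_{n+j}\le 0$ simultaneously. This is precisely how the paper argues (via the explicit componentwise description of $P_{S_2}$ and the index sets $I_1,I_2,I_3$ in the proof of \cref{theorem:fdecreases_Pmatrix}): it yields $v_j(M^\T v)_j=-(\bar w_j-p_j)(\bar w_{n+j}-p_{n+j})\le 0$ on $I_1$ and $=0$ on $I_2\cup I_3$, after which $v=0$, $\bar w=p$, and $p\in S_1\cap S_2$ follow. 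With that strengthening your plan goes through; without it, the sign system you propose to feed into the $P$-matrix lemma is too weak.
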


Combining \cref{thm:fixedpoints_Pmatrix} and
\cref{thm:global_lcp_nondegenerate}, we obtain global convergence of
the algorithms to the solution set of the problem, for both the
non-accelerated and the accelerated versions. 

\begin{corollary}[Global convergence to solution set]
	\label{cor:lcp_global_Pmatrix}
		The algorithms given in \cref{thm:global_lcp_nondegenerate} and their accelerated versions via \cref{alg:PDMC_accelerated} are globally convergent to
		$S_1\cap S_2$ if $M$ is a $P$-matrix. Moreover, the projected
		subgradient algorithm converges $Q$-linearly to $S_1\cap S_2$. 
\end{corollary}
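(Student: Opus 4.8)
The plan is to combine \cref{thm:global_lcp_nondegenerate} with the fixed-point characterization \cref{thm:fixedpoints_Pmatrix}. Since every $P$-matrix has nonzero principal minors, it is nondegenerate, so \cref{thm:global_lcp_nondegenerate} applies verbatim: any sequence generated by \cref{eq:pdmc_affine_ucs,eq:fb_affine_ucs,eq:ps_affine_ucs} is bounded and its accumulation points lie in $\Fix(\Tpdmc)$, $\Fix(\Tfb)$, and $\Fix(\Tps)$, respectively. By \cref{thm:fixedpoints_Pmatrix}, each of these fixed-point sets coincides with $S_1\cap S_2$, which is a single point $w^*$ because the LCP associated with a $P$-matrix has a unique solution \citep[Theorem~3.3.7]{CPS92}.

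Next I would verify that each of the three operators is single-valued at $w^*$, which is precisely what \cref{thm:global_lcp_nondegenerate} needs to promote subsequential convergence to full-sequence convergence. Since $w^*\in S_1$ we have $Aw^*=b$, hence $\nabla f_Q(w^*)=A^\T Q(Aw^*-b)=0$ by \cref{eq:grad_f_Q}; and since $w^*\in S_2$, the projection $P_{S_2}(w^*)=\{w^*\}$ is a singleton, as no other point of $S_2$ is at distance zero from $w^*$. Substituting these two facts into \cref{eq:pdmc_affine_ucs,eq:fb_affine_ucs,eq:ps_affine_ucs} shows $\Tpdmc(w^*)=\Tfb(w^*)=\Tps(w^*)=\{w^*\}$. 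Therefore \cref{thm:global_lcp_nondegenerate} yields $w^k\to w^*$ for all three non-accelerated algorithms, and part (c) of that theorem additionally gives the local $Q$-linear rate for the projected subgradient iteration. Since $w^*=S_1\cap S_2$, convergence to $w^*$ is exactly convergence to the solution set.

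For the accelerated variants produced by \cref{alg:PDMC_accelerated} applied to $\Tpdmc$, $\Tfb$, or $\Tps$, I would invoke \cref{thm:global_acce}: the relevant objective $V$ (namely $f_Q+\tfrac12\dist(\cdot,S_2)^2$ for PDMC and FB, and $f_Q+\delta_{S_2}$ for PS) is a Lyapunov function for the corresponding operator, as already used in the proof of \cref{thm:global_lcp_nondegenerate}, so every accumulation point of the accelerated sequence is a fixed point, hence equals $w^*$. It remains to note that the accelerated sequence is bounded: by \cref{eq:descent_acce_fpa} together with \cref{eq:Lyapunovcondition} the values $\{V(w^k)\}$ are non-increasing, and $V$ is coercive — it is the finite pointwise minimum over $\iota$ of the coercive functions $V_\iota$ from the proof of \cref{thm:global_lcp_nondegenerate} — so $\{w^k\}$ remains in a bounded sublevel set. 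A bounded sequence whose only accumulation point is $w^*$ converges to $w^*$, which establishes the claim.

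The argument is mostly bookkeeping once \cref{thm:fixedpoints_Pmatrix} is available; the two places needing genuine care are the single-valuedness of the operators at $w^*$ (the step that upgrades ``convergence to fixed points'' into ``convergence to the solution set''), and the boundedness-plus-unique-accumulation-point reasoning required to transfer the conclusion to the accelerated scheme, where \cref{thm:global_acce} supplies only subsequential information.
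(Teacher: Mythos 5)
Your proposal is correct and follows essentially the same route as the paper: combine \cref{thm:global_lcp_nondegenerate} with \cref{thm:fixedpoints_Pmatrix} and the uniqueness of the $P$-matrix LCP solution, then handle the accelerated variants via coercivity of the Lyapunov function, boundedness, and the fact that a bounded sequence with a unique accumulation point converges. The only (harmless) difference is that for the non-accelerated algorithms you explicitly verify single-valuedness of $\Tpdmc$, $\Tfb$, and $\Tps$ at $w^*$ to invoke the full-sequence clause of \cref{thm:global_lcp_nondegenerate}, whereas the paper sidesteps this by treating the non-accelerated iterations as the $t_k\equiv 0$ case of \cref{alg:PDMC_accelerated} and applying the same bounded-sequence/unique-fixed-point argument uniformly.
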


%
\begin{proof}
	In the
	proof of \cref{thm:global_lcp_nondegenerate}, we have shown the
	coercivity of the corresponding Lyapunov functions of the
	algorithms. Hence, by \cref{eq:descent_acce_fpa},
	\cref{alg:PDMC_accelerated} generates a bounded sequence, and accumulation points are fixed points by \cref{thm:global_acce}. Together
	with \cref{thm:fixedpoints_Pmatrix}, any sequence generated by
	\cref{alg:PDMC_accelerated} must converge to the unique point in
	$S_1\cap S_2$. Setting $t_k\equiv 0$ in \cref{alg:PDMC_accelerated} gives the desired result for the non-accelerated algorithms. Local linear convergence of the projected subgradient algorithm follow from \cref{thm:global_lcp_nondegenerate} (c).\cpsolved{I don't think here sbusequencial convergence has
	already implied full convergence. Am I missing something?}
	\jhsolved{Since the sequence is bounded and all subsequential limits are equal to the \textit{unique} point in $S_1\cap S_2$, the full sequence is convergent. We can actually merge Corollary 6.10 and Corollary 6.11. What do you think?}
		\ifdefined\submit
		\qed
		\else
		\qedhere
		\fi
\end{proof}

\begin{remark}
	In the same spirit as in the discussions in \cref{remark:component}, \cref{remark:specialcases} (b), and \cref{sec:acce_pdmc}, we note that latter 
	iterations of 
	the algorithms \cref{eq:pdmc_affine_ucs,eq:fb_affine_ucs,eq:ps_affine_ucs} 
	indicate which $\iota\in \I$ can be used 
	to reduce the original problems \cref{eq:unified_sparseaffine,eq:unified_sparseaffine2} into 
	the simpler problem of finding a point in $S_1\cap R_{\iota}$. For the LCP, finding $S_1\cap R_{\iota}$ 
	is equivalent to solving the system $Aw^* = b$ and 
	$w^*_{\Lambda_{\iota^c}}=0$, which is 
	simply an $n\times n$ system of linear equations. If the obtained solution satisfies 
	$w^*_{\iota}\geq 0$, then $w^*$ is indeed a solution of the original feasibility problem. 
	For 
	\cref{eq:pdmc_affine_ucs,eq:fb_affine_ucs,eq:ps_affine_ucs} and their
	extrapolation-accelerated versions by \cref{alg:PDMC_accelerated},
	\cref{cor:lcp_global_Pmatrix}
	guarantees that these algorithms will converge to 
	the unique point $w^*$ in $S_1\cap S_2$ when $M$ is a $P$-matrix. Thus, 
	theoretically, we know that \cref{alg:PDMC_plus} will indeed output the solution 
	$w^*$. Similarly, for the sparse affine feasibility problem, the reduced feasibility problem of finding a point in $S_1\cap R_{\iota}$ amounts to solving 
	the linear system $A_{:,\iota}w_{\iota}=b$. These remarks will be used in the numerical implementation of \cref{alg:PDMC_plus} in \cref{sec:numerical}. 
	%
	
	
	\label{remark:linearsystem}
\end{remark}

\subsubsection{Special properties of the projected subgradient algorithm for LCP}\label{sec:ps_lcp_pmatrix}

We already know from \cref{cor:lcp_global_Pmatrix} that the projected subgradient algorithm \cref{eq:ps_affine_ucs} is globally convergent to $S_1\cap S_2$ for stepsizes $\lambda \in (0,1/L_Q)$. In this section, we show that the result also holds for $\lambda = 1/L_Q$. This in turn shows the global convergence of the method of alternating projections by setting $Q=(AA^\T)^{-1}$, which is a rare result in the nonconvex setting. Indeed, proving such a result for the LCP requires a number of technical lemmas, an indication that global convergence for MAP is indeed difficult to obtain for nonconvex problems in general.

In addition to \cref{thm:fixedpoints_Pmatrix}, the following
proposition is needed for proving the desired global convergence result.
As the proof needs many other technical lemmas, we defer its
presentation to \cref{app:lyapunov_pmatrix}.

\begin{theorem}\label{theorem:fdecreases_Pmatrix}
	Consider the setting of \cref{thm:global_lcp_nondegenerate}.
	Let $w\in S_2 \setminus \Fix (T_{\rm PS}^{\lambda})$ and $w^+ \in  
	T_{\rm PS}^{\lambda}(w)$ where $\lambda \in (0,1/L_Q]$. If $M$ is a 
	$P$-matrix, then $f_Q(w^+) < f_Q(w)$. Consequently, $V \coloneqq f_Q+\delta_{S_2}$ is a Lyapunov function for
	\cref{eq:PS} for any $\lambda \in (0,1/L_Q]$.
\end{theorem}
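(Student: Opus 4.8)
The plan is to read the update \cref{eq:ps_affine_ucs}, i.e.\ $w^+\in P_{S_2}(u)$ with $u\coloneqq w-\lambda\nabla f_Q(w)$, as a projected-gradient step on the convex quadratic $f_Q$, and to exploit that $w$ is itself a feasible competitor in the projection producing $w^+$. Since $w\in S_2$, we get $\norm{w^+-u}\le\norm{w-u}$, and equality would force $\norm{w-u}=\dist(u,S_2)$, i.e.\ $w\in P_{S_2}(u)=\Tps(w)$, i.e.\ $w\in\Fix(\Tps)$; as $w\notin\Fix(\Tps)$, the inequality is \emph{strict} for every selection $w^+\in\Tps(w)$. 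Squaring $\norm{w^+-u}^2<\norm{w-u}^2$, writing $w^+-u=(w^+-w)+\lambda\nabla f_Q(w)$ and cancelling the common term $\lambda^2\norm{\nabla f_Q(w)}^2$ gives
\[
\lla\nabla f_Q(w),\,w^+-w\rra\;<\;-\frac{1}{2\lambda}\norm{w^+-w}^2 ,
\]
which in particular yields $w^+\neq w$.

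I would then combine this with the exact second-order identity for the quadratic $f_Q$, namely $f_Q(w^+)=f_Q(w)+\lla\nabla f_Q(w),w^+-w\rra+\tfrac{1}{2}(w^+-w)^\T A^\T Q A(w^+-w)$, together with $(w^+-w)^\T A^\T Q A(w^+-w)\le\norm{A^\T Q A}\norm{w^+-w}^2=L_Q\norm{w^+-w}^2$ from \cref{eq:LQ}, to obtain $f_Q(w^+)\le f_Q(w)+\lla\nabla f_Q(w),w^+-w\rra+\tfrac{L_Q}{2}\norm{w^+-w}^2$. Inserting the strict estimate of the first step gives, for $\lambda\le 1/L_Q$,
\[
f_Q(w^+)\;<\;f_Q(w)+\frac{\lambda L_Q-1}{2\lambda}\norm{w^+-w}^2\;\le\;f_Q(w).
\]
For $Q=(AA^\T)^{-1}$ and $\lambda=1/L_Q=1$ the second step can even be bypassed: there $u=P_{S_1}(w)$, so $\dist(w^+,S_1)\le\norm{w^+-P_{S_1}(w)}<\norm{w-P_{S_1}(w)}=\dist(w,S_1)$.

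For the ``consequently'' clause I would verify \cref{defn:lyapunov} for $V\coloneqq f_Q+\delta_{S_2}$: $\inf V\ge 0>-\infty$; $V$ is continuous on $\dom V=S_2$ since $f_Q$ is; and the descent condition \cref{eq:Lyapunovcondition} together with its equality clause follows from the above. If $w\notin S_2$, then $V(w)=+\infty>V(w^+)$ because $w^+\in S_2$; if $w\in S_2\setminus\Fix(\Tps)$, the strict decrease above together with compactness of $\Tps(w)$ (a finite set of singleton projections onto the convex pieces of $S_2$) gives $\sup_{w^+\in\Tps(w)}V(w^+)<V(w)$; and if $w\in\Fix(\Tps)$, the non-strict version of the two displays gives $V(w^+)\le V(w)$. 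Hence equality in \cref{eq:Lyapunovcondition} can hold only at fixed points, so $V$ is a Lyapunov function for all $\lambda\in(0,1/L_Q]$.

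The main obstacle is the endpoint $\lambda=1/L_Q$, which is precisely the stepsize realizing \cref{eq:Map} (via $Q=(AA^\T)^{-1}$) and \cref{eq:Mavep}: there the usual sufficient-decrease coefficient $\tfrac{\lambda L_Q-1}{2\lambda}$ vanishes, so strictness cannot be read off the descent lemma and must instead be harvested from the projection inequality through the equivalence ``equality $\iff w\in\Fix(\Tps)$''. The point requiring care is that $P_{S_2}$ is set-valued on the nonconvex $S_2$, so the strict inequality must be asserted for an arbitrary $w^+\in\Tps(w)$ (which is fine, since every such $w^+$ attains $\dist(u,S_2)$). Where one further wants to locate $w^+$ relative to the unique point of $S_1\cap S_2$---convenient for the $Q$-linear rate stated alongside this result---the $P$-matrix structure is what gets invoked, giving the more technical argument of \cref{app:lyapunov_pmatrix}.
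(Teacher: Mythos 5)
Your proof is correct, but it takes a genuinely different and substantially more elementary route than the paper's. The paper's strategy (Appendix \ref{app:lyapunov_pmatrix}) is to exhibit an explicit competitor $\hat w\neq w$ of the form $\hat w=P_{R_\iota}(w-\lambda\nabla f_Q(w))$ for some convex piece $R_\iota\ni w$, and then compare $Q_{f_Q}^{\lambda}(\hat w,w)$ with $f_Q(w)$ via the obtuse-angle inequality for projections onto the convex piece (\cref{lemma:fdecreases_notfixedpoint}); the entire nondegenerate/degenerate case split, the index set $\Gamma(w)$, and the $P$-matrix characterization of \cref{lemma:Pmatrix} are deployed solely to rule out the awkward situation where $w=P_{R_\iota}(w-\lambda\nabla f_Q(w))$ for \emph{every} piece containing $w$ even though $w\notin P_{S_2}(w-\lambda\nabla f_Q(w))$. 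You sidestep this entirely by using $w$ itself as the competitor at the level of the \emph{global} argmin characterization \cref{eq:T_PS=argmin}: since $w\in S_2$ but $w\notin T_{\rm PS}^{\lambda}(w)=\argmin_{z\in S_2}Q_{f_Q}^{\lambda}(z,w)$, one gets the strict chain
\begin{equation*}
f_Q(w^+)\;\le\;Q_{f_Q}^{\lambda}(w^+,w)\;=\;\min_{z\in S_2}Q_{f_Q}^{\lambda}(z,w)\;<\;Q_{f_Q}^{\lambda}(w,w)\;=\;f_Q(w),
\end{equation*}
where the first inequality is \cref{eq:f<=Qf} (valid up to and including $\lambda=1/L_Q$) and the strict inequality is exactly your observation that $\norm{w^+-u}=\dist(u,S_2)<\norm{w-u}$. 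Your handling of the Lyapunov clause (finiteness of $T_{\rm PS}^{\lambda}(w)$ so the supremum is attained, the non-strict estimate at fixed points, and the trivial case $w\notin S_2$) is also fine. What your argument buys is considerable: it never invokes the $P$-matrix (or even nondegeneracy) hypothesis, so it proves the descent claim for any closed $S_2$ and any convex quadratic $f_Q$, whereas the paper's proof is tied to the LCP structure. The $P$-matrix assumption remains genuinely essential elsewhere — in \cref{thm:fixedpoints_Pmatrix}, which identifies $\Fix(T_{\rm PS}^{\lambda})$ with $S_1\cap S_2$ and is what converts the Lyapunov property into convergence to the solution set — but for the strict decrease itself your shorter argument suffices, and you correctly isolate the $\lambda=1/L_Q$ endpoint as the place where strictness must come from the projection inequality rather than from the coefficient $\tfrac{1-\lambda L_Q}{2\lambda}$.
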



We now state our main result showing the convergence of \cref{eq:ps_affine_ucs} with stepsize $\lambda = 1/L_Q$. We highlight that for a sequence generated by the PS algorithm, we obtain an 
additional property that the 
objective function values $\{f_Q(w^k)\}$ decreases to zero $Q$-linearly as well. 

\begin{theorem}
	\label{thm:global_Pmatrix}
	Let $M$ be a $P$-matrix, $b\in \Re^n$, $A = [M~-\Id]$,
	and consider \cref{eq:feasibilityproblem} with
	\cref{eq:S1_lcp}.
	Denote by $w^*$ the unique point in $S_1\cap S_2$ and let $Q\in \{ 
	(AA^\T)^{-1},\Id\}$, and $f_Q$ and $L_Q$ be given by \cref{eq:f_Q} and 
	\cref{eq:LQ}, respectively. Any sequence generated by
		\cref{eq:ps_affine_ucs} 
		with $\lambda\in (0,1/L_Q]$ 
		converges to $w^*$ with a local $Q$-linear rate. Moreover, the objective function
		\cref{eq:unified_sparseaffine2} converges to the global
		optimum of $0$ with a local $Q$-linear rate. 
\end{theorem}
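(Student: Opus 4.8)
The plan is to reinterpret \cref{eq:ps_affine_ucs} as the fixed-point iteration \cref{eq:fixedpointalgorithm} for $T=\Tps$ and to upgrade the subsequential result of \cref{thm:global_lcp_nondegenerate}(c) all the way to the endpoint stepsize $\lambda=1/L_Q$. The crucial new ingredient is \cref{theorem:fdecreases_Pmatrix}, which guarantees that $V:=f_Q+\delta_{S_2}$ is a Lyapunov function for \cref{eq:PS} even when $\lambda=1/L_Q$. Since $V$ is coercive — this was shown inside the proof of \cref{thm:global_lcp_nondegenerate} via \cref{eq:Vcoercive_nondegenerate} together with \cref{lemma:SRIP_lcp} and the coercivity of $f_Q$ on each $R_\iota$ — any sequence $\{w^k\}$ generated by \cref{eq:ps_affine_ucs} is bounded and thus has accumulation points, each of which lies in $\Fix(\Tps)$ by \cref{thm:global_acce}. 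By \cref{thm:fixedpoints_Pmatrix}, $\Fix(\Tps)=S_1\cap S_2$, which is the singleton $\{w^*\}$ because $M$ is a $P$-matrix. A bounded sequence whose only accumulation point is $w^*$ converges to $w^*$; this establishes full-sequence convergence, and it remains to prove the local $Q$-linear rates for $\{w^k\}$ and for $\{f_Q(w^k)\}$.

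For the rate of the iterates I would run a component-identification argument in the spirit of \cref{prop:linear_ps} and \cref{remark:specialcases}(b), now valid at $\lambda=1/L_Q$. By \cref{lemma:moreau_prox}, write the PS step as $w^{k+1}\in P_{R_{\iota_k}}(z^k)$ with $z^k:=w^k-\lambda\nabla f_Q(w^k)$, so that $w^{k+1}\in R_{\iota_k}$. Since $w^k\to w^*$, applying \cref{lemma:invariantball} to $\{R_\iota:\iota\in\I\}$ shows that for all large $k$ we have $w^*\in R_{\iota_k}$ and also $w^k\in R_{\iota_{k-1}}$ with $w^*\in R_{\iota_{k-1}}$. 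Because $w^*\in S_1$ we have $\nabla f_Q(w^*)=0$, hence $z^*:=w^*-\lambda\nabla f_Q(w^*)=w^*$ and $P_{R_{\iota_k}}(z^*)=w^*$ by convexity of $R_{\iota_k}$. Then nonexpansiveness of $P_{R_{\iota_k}}$ followed by the restricted Lipschitz estimate of $Id-\lambda\nabla f_Q$ on $R_{\iota_{k-1}}$ worked out in the proof of \cref{thm:global_ps_affine} gives $\|w^{k+1}-w^*\|\le\|z^k-z^*\|\le\kappa_{\iota_{k-1}}\|w^k-w^*\|$. The technical crux is that $\kappa_\iota$ stays strictly below $1$ at the endpoint: substituting $\lambda=1/L_Q$ into the formula for $\kappa_\iota^2$ from that proof yields $\kappa_\iota^2=1-\ell_\iota$ when $Q=(AA^\T)^{-1}$ and $\kappa_\iota^2=1-\|A\|^{-4}\ell_\iota$ when $Q=\Id$, where $\ell_\iota:=\lambda_{\min}\big((A^\T Q A)_{:,\iota}^\T(A^\T Q A)_{:,\iota}\big)$. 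This $\ell_\iota$ is strictly positive because $A_{:,\iota}$ is a nonsingular $n\times n$ matrix — $M$ is a $P$-matrix, hence nondegenerate, so \cite[Lemma 2.10]{ACT21} applies — while $\ell_\iota\le\|A^\T Q A\|^2$, which equals $1$ (resp.\ $\|A\|^4$); thus $\kappa_\iota^2\in[0,1)$. Taking $\kappa:=\max\{\kappa_\iota:w^*\in R_\iota\}<1$, we conclude $\|w^{k+1}-w^*\|\le\kappa\|w^k-w^*\|$ for all large $k$, i.e.\ local $Q$-linear convergence.

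For the objective values, $f_Q(w^*)=0$ since $w^*\in S_1$, and the descent lemma together with $\nabla f_Q(w^*)=0$ gives $f_Q(w^k)\le\tfrac{L_Q}{2}\|w^k-w^*\|^2$, so $\{f_Q(w^k)\}$ already tends to $0$ at rate $\kappa^2$. To obtain a genuine $Q$-linear statement I would combine the monotone strict decrease $f_Q(w^{k+1})<f_Q(w^k)$ from \cref{theorem:fdecreases_Pmatrix} with the matching lower bound $f_Q(w)\ge\tfrac12\ell_\iota\|w-w^*\|^2$ valid for $w\in R_\iota$ with $w^*\in R_\iota$ (again because $A_{:,\iota}$ is nonsingular and $\nabla f_Q(w^*)=0$): this yields $f_Q(w^{k+1})\le\tfrac{L_Q}{\ell_{\iota_{k-1}}}\kappa_{\iota_{k-1}}^2\,f_Q(w^k)$ for all large $k$, and since the iterates eventually remain on the finitely many pieces $R_\iota$ that contain $w^*$, the constants are uniform, giving $\{f_Q(w^k)\}\to 0$ at a local $Q$-linear rate.

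I expect the main obstacle to lie in the two places where the endpoint $\lambda=1/L_Q$ is genuinely delicate: first, that $V$ is still a Lyapunov function there — which is exactly the content of \cref{theorem:fdecreases_Pmatrix} and may be invoked here — and second, obtaining a strict contraction rather than mere nonexpansiveness for the linear rate. At $\lambda=1/L_Q$ the operator $Id-\lambda\nabla f_Q$ has global Lipschitz constant exactly $1$, since the Gram matrix $A^\T Q A$ is rank-deficient and therefore $Id-\lambda\nabla f_Q$ has an eigenvalue equal to $1$; consequently \cref{thm:global_fullsequence} cannot be applied off the shelf to get a rate, and the localization onto the active convex piece $R_\iota$ — where, as shown above, the restricted contraction constant $\kappa_\iota$ is strictly less than $1$ precisely because $A_{:,\iota}$ is nonsingular for nondegenerate $M$ — is indispensable.
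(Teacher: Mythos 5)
Your treatment of the full-sequence convergence and of the local $Q$-linear rate of the \emph{iterates} is sound and is essentially the paper's own route: coercivity of $V=f_Q+\delta_{S_2}$ together with \cref{theorem:fdecreases_Pmatrix} gives the Lyapunov property at the endpoint $\lambda=1/L_Q$, \cref{thm:fixedpoints_Pmatrix} collapses $\Fix(\Tps)$ to the singleton $\{w^*\}$, and your computation that the localized contraction constant satisfies $\kappa_\iota^2=1-\ell_\iota$ (resp.\ $1-\|A\|^{-4}\ell_\iota$) and stays in $[0,1)$ at $\lambda=1/L_Q$ because $A_{:,\iota}$ is nonsingular is exactly the intended extension of \cref{thm:global_ps_affine} that the paper invokes tersely.

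The gap is in the last step, the $Q$-linear rate of the objective values. Your inequality $f_Q(w^{k+1})\le \frac{L_Q}{\ell_{\iota}}\kappa_{\iota}^2\, f_Q(w^k)$ does not give a $Q$-linear rate, because the factor $\frac{L_Q}{\ell_\iota}\kappa_\iota^2$ need not be below $1$: for $Q=(AA^\T)^{-1}$ it equals $(1-\ell_\iota)/\ell_\iota$, which is at least $1$ whenever $\ell_\iota\le 1/2$, and $\ell_\iota$ can be arbitrarily small. Chaining the upper quadratic bound (constant $L_Q/2$) with the lower quadratic bound (constant $\ell_\iota/2$) inevitably costs the condition number $L_Q/\ell_\iota$, so this route only yields $R$-linear decay of the values (they are dominated by $\frac{L_Q}{2}\kappa^{2(k-N)}\|w^N-w^*\|^2$), and the monotone decrease from \cref{theorem:fdecreases_Pmatrix} does not upgrade $R$-linear to $Q$-linear. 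The paper's argument is different here: since $w^{k+1}$ minimizes $Q_{f_Q}^\lambda(\cdot,w^k)$ over $S_2$ by \cref{eq:prox=argmin}, one may test this minimum against $\alpha w^*+(1-\alpha)w^k$, which lies in $R_\iota\subset S_2$ for large $k$ by convexity of the identified piece; convexity of $f_Q$, $f_Q(w^*)=0$, and the quadratic growth $\frac12\|w^k-w^*\|^2\le\eta f_Q(w^k)$ from \cref{lemma:SRIP_lcp} then give the one-step ratio $1-\alpha+\frac{\eta}{\lambda}\alpha^2$, which is strictly below $1$ after minimizing over $\alpha\in[0,1]$. You need to replace your final chain with an argument of this kind that produces a one-step ratio provably less than one.
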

\begin{proof}
		Linear convergence of 
		$\{w^k\}$ to $w^*$ when $\lambda\in (0,1/L_Q)$ is already provided in \cref{thm:global_lcp_nondegenerate}. Linear convergence for $\lambda = 1/L_Q$ can be proved using the fact from \cref{theorem:fdecreases_Pmatrix} that $V=f_Q+\delta_{S_2}$ is a Lyapunov function when $\lambda = 1/L_Q$, together with \cref{remark:lambda_full}, \cref{thm:fixedpoints_Pmatrix}, and the techniques used in
		\cref{thm:global_lcp_nondegenerate}. Thus, it remains to show that $f_Q(w^k) 
		\to 
		0$ at a 
		$Q$-linear rate. 
		
		From \cref{remark:specialcases} (b), we know that there exists $N \geq 0$ such 
		that 
		$w^k \in R_{\iota}$ for all $k \geq N$ and for any $\iota \in \I$ such that $w^*\in 
		R_{\iota}$. 
		 It then follows that 
		$w^k-w^* 
		\in S_2 \cup (-S_2)$ for all $k\geq N$. Suppose now that $Q=(AA^\T)^{-1}$. By using 
		\cref{lemma:SRIP_lcp} and noting that $AA^{\dagger}=\Id$, we get
			\[\nu \|w^k-w^*\|^2 \leq \|A(w^k-w^*)\|^2 = \| 
			(AA^{\dagger})A(w^k-w^*)\|^2 \leq \|A\|^2 
			\cdot \|A^{\dagger}A(w^k-w^*)\|^2 ,\]
		that is, 
		\begin{equation}
			\frac{\nu}{\|A\|^2}\|w^k-w^*\|^2  \leq \|A^{\dagger} A(w^k-w^*)\|^2,
			\quad \forall k\geq N.
			\label{eq:SRIP_lcp_1}
		\end{equation}
		On the other hand, if $Q=\Id$, we immediately get from 
		\cref{lemma:SRIP_lcp} that 
		\begin{equation}
			 \nu \|w^k-w^*\|^2  \leq \|A(w^k-w^*)\|^2,
			\quad \forall k\geq N.
			\label{eq:SRIP_lcp_2}
		\end{equation}
		Since $w^* \in S_1$, with \cref{eq:SRIP_lcp_1,eq:SRIP_lcp_2}, we obtain that 
		\begin{equation}
			\frac{1}{2} \|w^k-w^*\|^2 \leq \eta f_Q(w^k), \,
			\forall k\geq N, 
			\,\text{where} \,
			\eta=  \begin{cases}
				\frac{\|A\|^2}{\nu} & \text{if}~Q = (AA^\T)^{-1} \\
				\frac{1}{\nu} & \text{if}~Q=\Id.
			\end{cases}
			\label{eq:f_Q>=normsquared}
		\end{equation}
		Then, 
		\begin{align}
			\nonumber
			f_Q\left( w^{k+1} \right) & \overset{\cref{eq:f<=Qf}}{\leq} \min_{z 
			\in 
				S_2} \;
			Q_{f_Q}^{\lambda}(z,w^k) \\ 
			&\leq 
			\min_{\alpha \in [0,1]}\; f_Q\left( \alpha w^* + (1 - \alpha) w^k
			\right) + \frac{1}{2\lambda} \left\| w^k - \left(\alpha w^* + (1 - 
			\alpha)
			w^k\right)
			\right\|^2 \nonumber \\
			& \leq (1-\alpha) f_Q(w^k)  + 
			\frac{\alpha^2}{2\lambda}
			\left\| w^k - w^*\right\|^2,\quad  \forall \alpha \in [0,1], 
			\nonumber 
		\end{align}
		where the last inequality is from the convexity of $f_Q$ and the fact that $f_Q(w^*) = 0$.
		Applying \cref{eq:f_Q>=normsquared} to the inequality above then gives
		\begin{equation}
			f_Q\left( w^{k+1} \right) \leq \left(1 - \alpha + 
			\frac{\eta}{\lambda}\alpha^2 \right) f_Q(w^k), \quad \forall \alpha
			\in [0,1].
			\label{eq:rate}
		\end{equation}
		The claim now follows by minimizing the right-hand side of 
		\cref{eq:rate} with respect to $\alpha$.
		\ifdefined\submit
		\qed
		\else
		\qedhere
		\fi
\end{proof}

\section{Numerical experiments}
\label{sec:numerical}
This section presents numerical experiments on sparse 
affine feasibility and linear 
complementarity problems to support the established theoretical convergence of 
the proposed algorithms and \modify{to demonstrate the efficiency of the
	acceleration schemes \cref{alg:PDMC_accelerated,alg:PDMC_plus}.}
For simplicity, we focus on the projected gradient algorithm
\cref{eq:ps_affine_ucs} with $Q\in \{I_n, (AA^\T)^{-1} \}$. We keep
the \modify{notations PS for $Q=I$ and MAP for $Q=(AA^\T)^{-1}$}. We include a prefix ``A'' and/or a
suffix ``$+$'' to signify that \cref{alg:PDMC_accelerated} and/or 
\cref{alg:PDMC_plus} are incorporated in the algorithms.
All experiments are conducted on a machine running Ubuntu 20.04 and
MATLAB R2021b with 64GB memory and an Intel Xeon Silver 4208 CPU with
8 cores and 2.1 GHz.
%
\modify{%
	To satisfy \cref{eq:descent_acce_fpa} in \cref{alg:PDMC_accelerated},  since $f_Q$ is a quadratic function,
	a closed form stepsize is obtained by taking $t_k = \max \{ 0,\min
		\{t_k^{(1)},t_k^{(2)} \} \}$ for the LCP and $t_k = \max \{
			0,t_k^{(1)}\}$ for the SAFP, where $t_k^{(1)}
			=\frac{-2\nabla f_Q(w^k)^\T p^k}{(Ap^k)^\T Q(Ap^k)+\sigma
			\|p^k\|^2}$ and $t_k^{(2)} = \min \{ -w_j^k/p_j^k: p_j^k<0
			\}$.
		}\cpsolved{Moved your remark here and slightly updated.}


\subsection{Sparse affine feasibility problem}
We consider SAFP with synthetic and real datasets described below and
compare our methods with
the proximal gradient method by \citet{BT11}, which we denote by
PG-BT.
%
For PS/MAP, we set
$\lambda 
= \tau/L_Q$ with $\tau = 0.999$ (see
\cref{thm:global_ps_affine}) and
$\sigma = 10^{-2}$. The parameter $N$ in \cref{alg:PDMC_plus} 
is 
set to $50$ for MAP and $100$ for PS.
	When \cref{alg:PDMC_plus} is used in combination with 
\cref{alg:PDMC_accelerated}, we set $N$ to half its specified value when only \cref{alg:PDMC_plus} 
is used.
The linear system described in 
\cref{remark:linearsystem} for dealing with Step 2.2 of
\cref{alg:PDMC_plus} is handled by solving $	A_{:,\iota}^{\top} A_{:,\iota} w_{\iota} = A_{:,\iota}^{\top} b$
using the conjugate gradient (CG) method
(see, for example, \citep[Chapter~5]{NoceWrig06}).
	For the SAFP problems, the cost of one CG iteration
	is $O(ms)$ and the number of CG iterations in one round of Step
	2.2 of \cref{alg:PDMC_plus} is upper bounded by $s$. Hence, the
	overall cost of invoking the CG procedure once is at most
	$O(ms^2)$, although we often observe that CG terminates
	within few iterations in practice, while one step of \cref{eq:ps_affine_ucs} is $O(mn)$.
	We also observe that empirically the CG procedure takes an almost
	negligible amount of running time in the whole procedure.
All algorithms are initialized with 
$w^0 = 
A^\T b$, and the residual is measured by
\begin{equation}
	\text{Residual} \coloneqq \frac{1}{2}\|Aw^k-b\|^2 + \frac{1}{2}\dist 
	(w^k,S_2)^2, 
	\label{eq:stop_saf}
\end{equation}
which is $0$ if and only if $w^k$ is a solution to the SAFP.

\medskip 
\noindent \textbf{Synthetic data.} We follow \cite{BBC11} to generate standard random test problems involving a 
matrix 
$A\in 
\Re^{m\times n}$ with entries
sampled from the standard normal distribution, and a sparse signal 
$w^*\in A_s$ such that the nonzero entries $w_i^*$ are generated as $w_i^* = 
\eta_1 10^{\alpha \eta_2}$ with $\alpha = 5$, $\eta_1 = \pm 1$ with probability 0.5, and 
$\eta_2 $ uniformly sampled from $[0,1]$. After generating $A$ 
and 
$w^*$, we set $b=Aw^*$ so that $w^*$ is a solution of the SAFP.  The
running time and
total iterations required for reducing \cref{eq:stop_saf} below
$10^{-6}$ with $n=10000$, $m=2500$, and
$s=625$ over ten independent trials are summarized in \cref{tab:safp}.

We see from \cref{tab:safp} that
the acceleration schemes \cref{alg:PDMC_accelerated,alg:PDMC_plus} 
reduce both the running time and the number of iterations of the 
algorithms.
The non-accelerated MAP algorithm is already more efficient than PG-BT,
but when $Q=\Id$, only the accelerated versions of PS have 
better 
performance than PG-BT.\cpsolved{But we don't have nonaccelerated PS here?} Finally, we observe that for this experiment, \cref{alg:PDMC_accelerated} has 
faster convergence than \cref{alg:PDMC_plus}, and incorporating component identification to 
\cref{alg:PDMC_accelerated} only resulted to minimal improvements in convergence time.
Component identification in this experiment only helped to
reduce the residual to a much lower level after the stopping criterion
of $10^{-6}$ is almost reached.

\begin{table}[tbh!]
	\centering
	\caption{Performance of algorithms on ten independent trials of 
		SAFP with synthetic data.
		For the running time and residual of each method, we report
		their average$\pm$standard deviation. 
Ave. CI Iters. refers to the average 
number of times Step 2.2 in \cref{alg:PDMC_plus}) is executed, while Ave. CI Time is the average amount 
of time required to finish \emph{one} CI iteration. \modify{PS and PS+ are
omitted as they} failed to solve the problems after 10000 iterations. } 
	\begin{tabular}{lrrrrcr}
\toprule 
\multirow{2}*{Method} & \multicolumn{1}{c}{Ave.} & \multicolumn{1}{c}{Time} & \multicolumn{1}{c}{Ave.}  & \multicolumn{1}{c}{Ave.}  & \multirow{2}*{Residual}  \\ 
 & \multicolumn{1}{c}{Iters} & \multicolumn{1}{c}{(seconds)} & \multicolumn{1}{c}{CI Iters} & \multicolumn{1}{c}{CI Time} &   \\ 
\midrule 
MAP & 673.6 & 10.6 $\pm$ 0.2 & NA & NA & 9.3e-07 $\pm$ 4.5e-08 \\ 
AMAP & 263.4 & 4.5 $\pm$ 0.3 & NA & NA & 7.8e-07 $\pm$ 8.6e-08 \\ 
MAP+ & 600.1 & 9.5 $\pm$ 0.2 & 1.2 & 0.014 & \bftab{1.4e-10 $\pm$ 4.5e-11} 
\\ 
AMAP+ & \bftab{250.1} & \bftab{4.3 $\pm$ 0.3} & 1 & 0.014 & \bftab{1.4e-10 
$\pm$ 4.5e-11} \\ 
APS & 417.5 & 5.7 $\pm$ 0.3 & NA & NA & 8.4e-07 $\pm$ 6.7e-08 \\ 
APS+ & 402.9 & 5.5 $\pm$ 0.3 & 1 & 0.014 & \bftab{1.4e-10 $\pm$ 4.5e-11} \\ 
\midrule 
PG-BT & 847.0 & 15.3 $\pm$ 0.6 & NA & NA & 9.5e-07 $\pm$ 4.2e-08 \\ 
\bottomrule 
\end{tabular}

	\label{tab:safp}
\end{table}

\medskip 
\noindent \textbf{Real-world datasets.} We then consider three public real-world 
datasets:\footnote{Downloaded from
	\url{http://www.csie.ntu.edu.tw/~cjlin/libsvmtools/datasets/}.} \colon ($m=62$, $n=2000$), \duke ($m=44$, $n=7129$) and \leu ($m=38$, 
$n=7129$). We set $s$ to 5\% of the total number of features $n$. 
	The results are 
summarized in \cref{fig:safp_real_time}.

Similar to the results on synthetic datasets,
the acceleration schemes \cref{alg:PDMC_accelerated,alg:PDMC_plus} 
reduce both the running time and the number of iterations of the 
algorithms, except that component identification in \cref{alg:PDMC_plus} did 
not take place for \duke.
For the other two datasets, component identification greatly reduced
both the running time and the number of iterations.
The algorithms corresponding to $Q=(AA^\T)^{-1}$ also provided performance better than 
those with $Q=\Id$ and PG-BT.

\begin{figure}[tb!]
	\centering
	\begin{tabular}{@{}ccc@{}}
		\includegraphics[width=.23\linewidth]{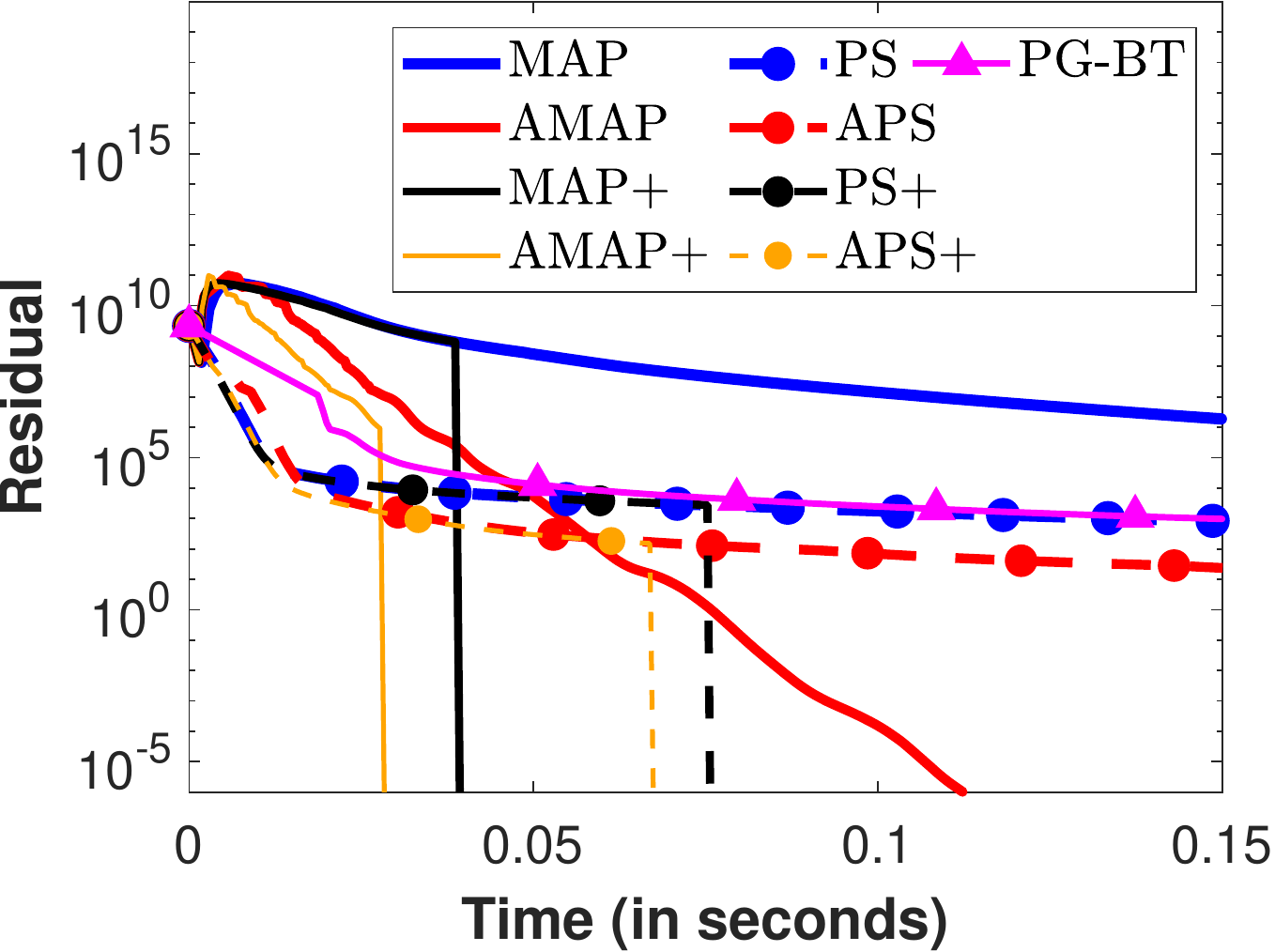}&
		\includegraphics[width=.23\linewidth]{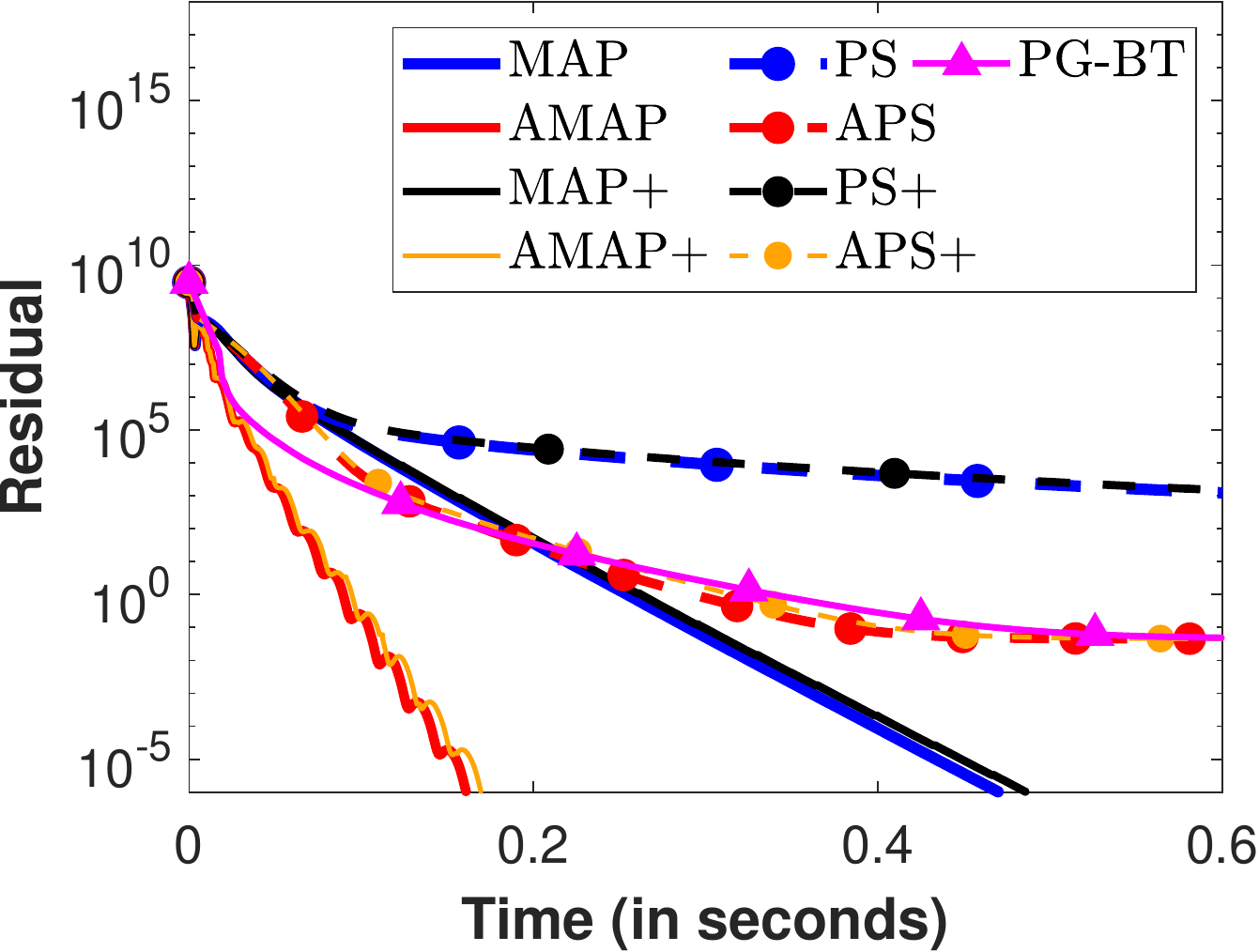}& 
		\includegraphics[width=.23\linewidth]{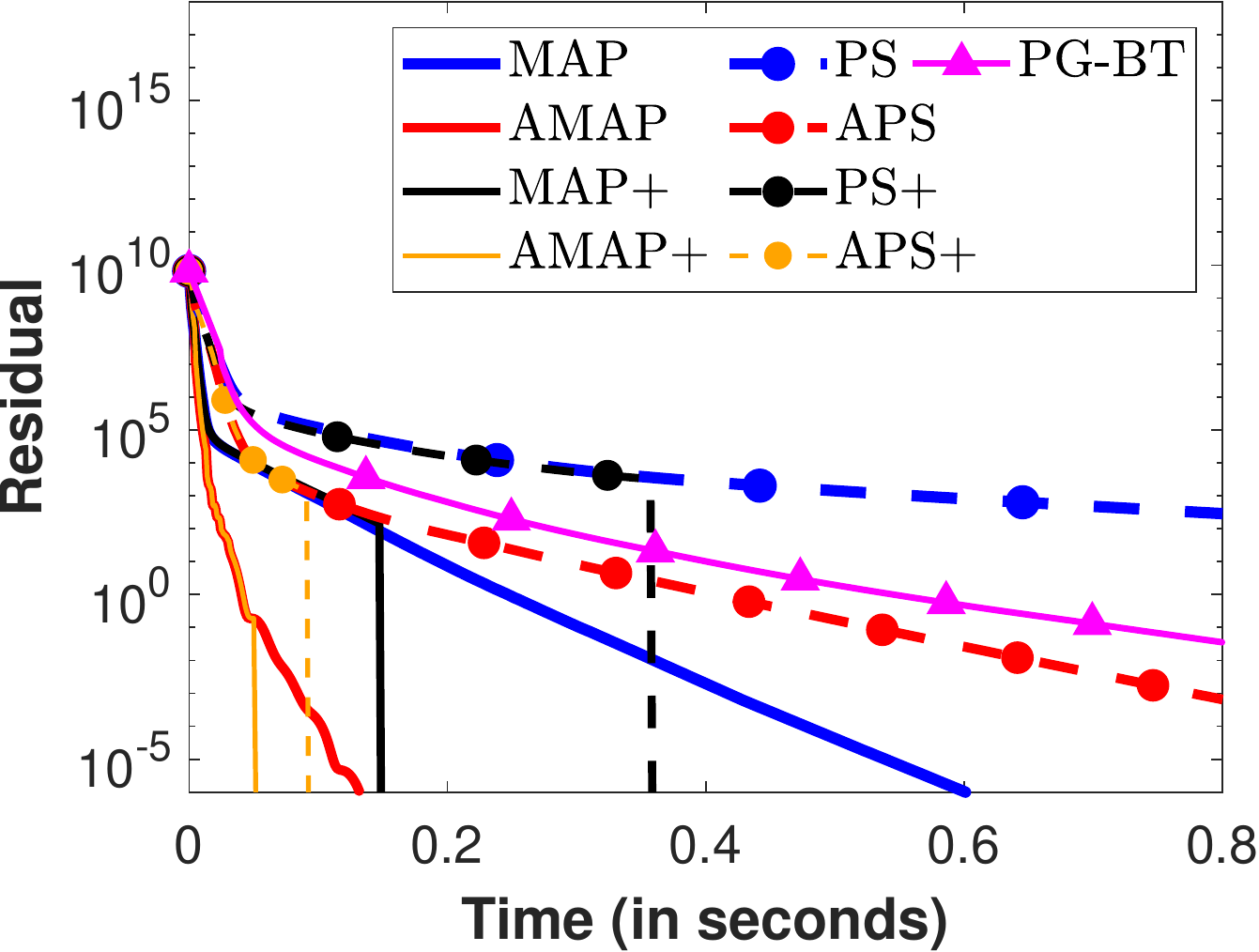} \\
		\includegraphics[width=.23\linewidth]{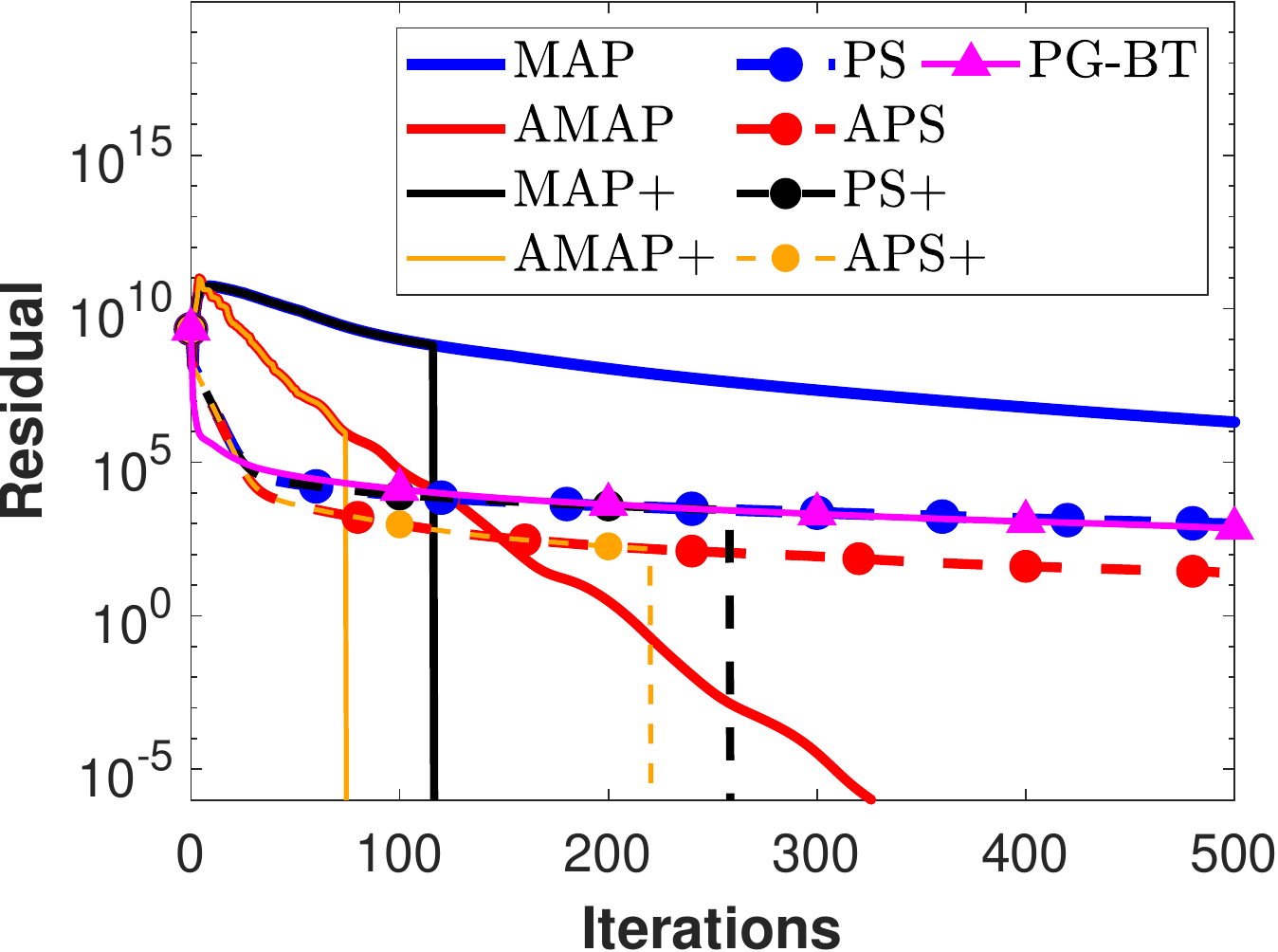}&
		\includegraphics[width=.23\linewidth]{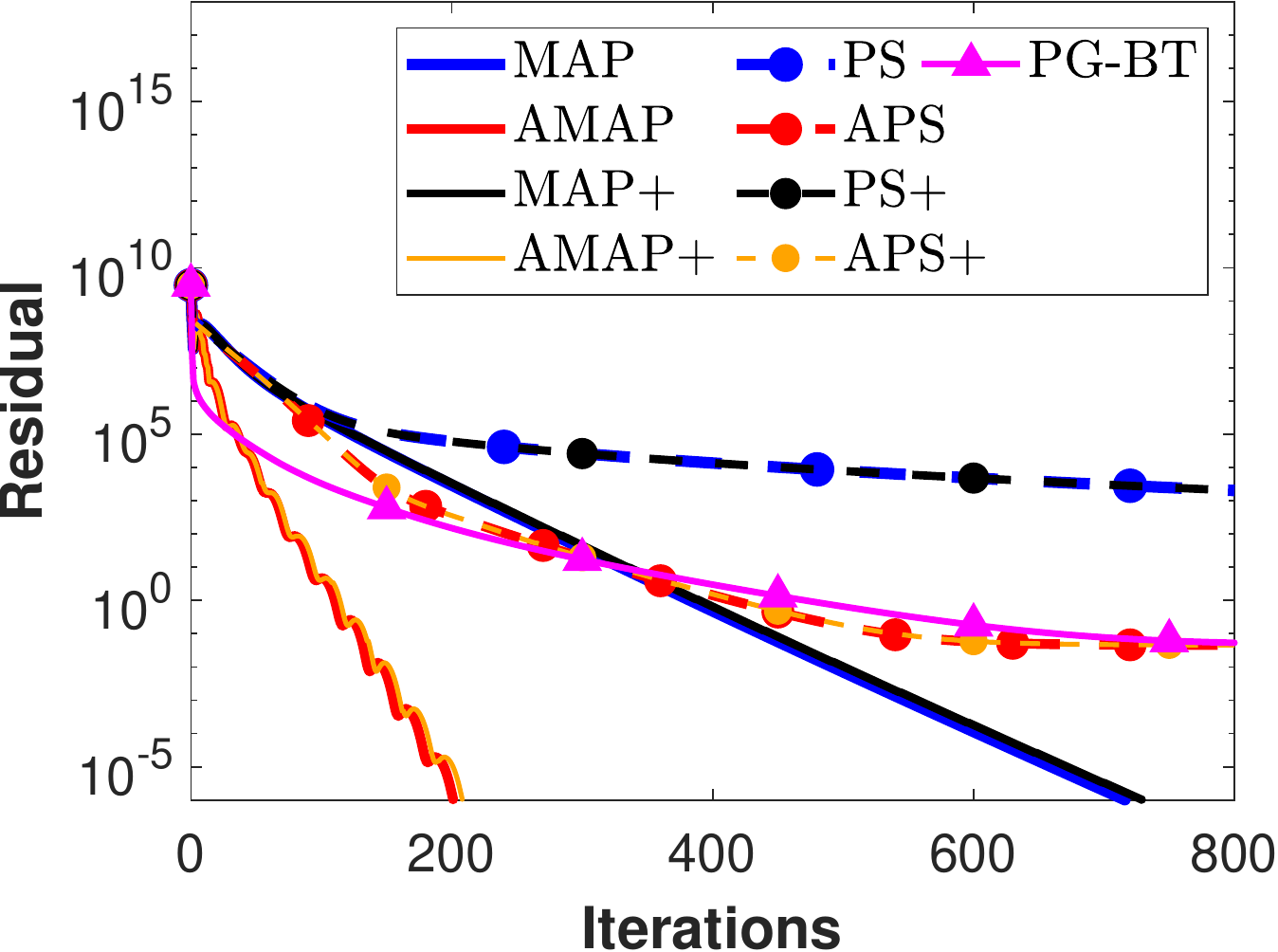}&
		\includegraphics[width=.23\linewidth]{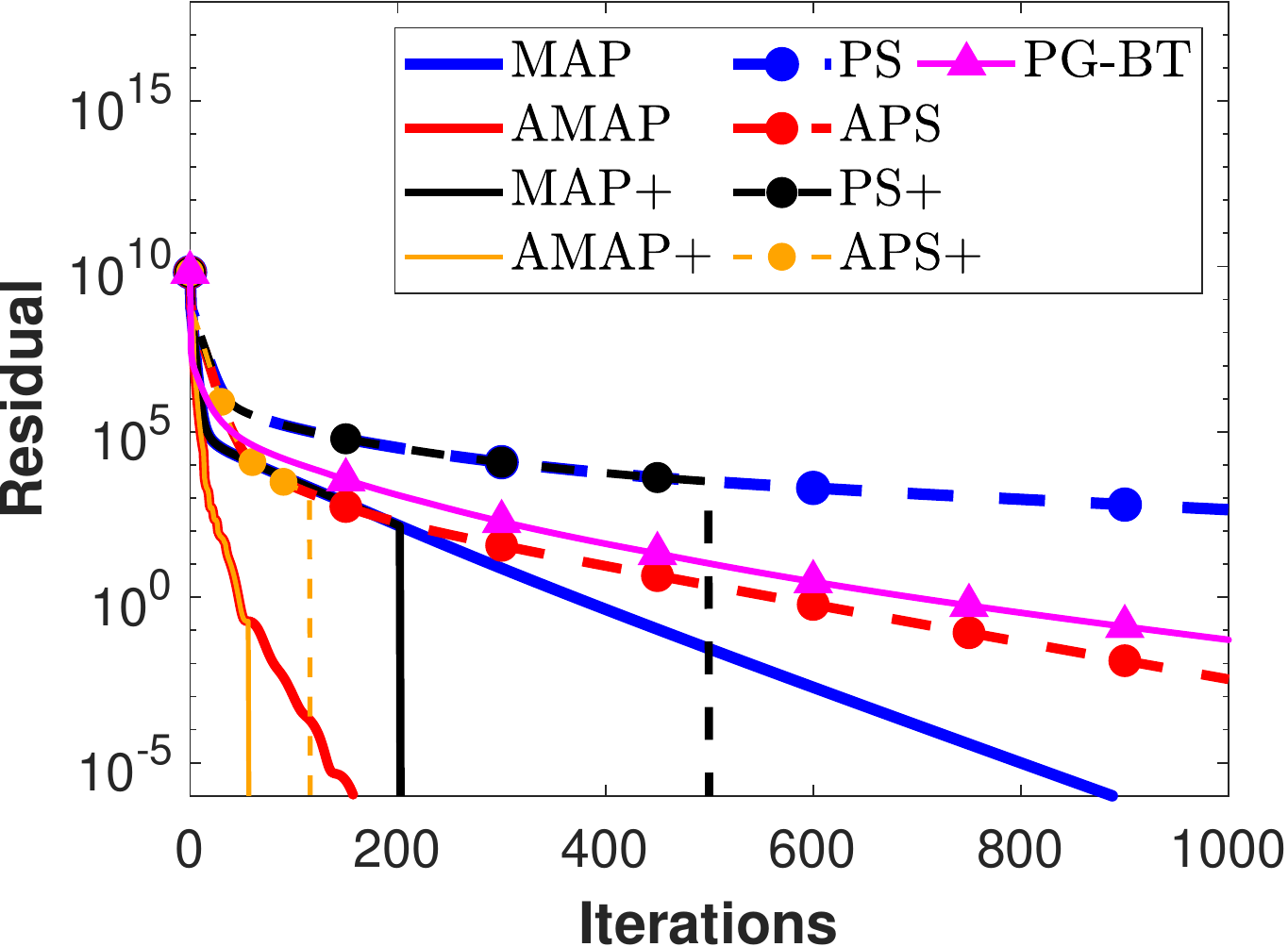} \\ 
		& & \\
		\colon  & \duke  & \leu \\
	\end{tabular}
	\caption{Comparisons of running time and iteration number of algorithms for 
		solving 
		SAFP with 
		real-world 
		datasets.}
	\label{fig:safp_real_time}
\end{figure}

\subsection{Linear complementarity problem}
We consider standard LCP test problems as follows and compare our
algorithms with BPA and EGA mentioned in \cref{sec:relatedworks}.

\medskip 
\noindent \textbf{LCP1.} \citep[Example 2]{QSZ00}  $M$ is a tridiagonal matrix 
with 
$M_{ii}=4$ 
for all $i\in [n]$ and $M_{ij}=-1$ when $|i-j|=1$, and 
$b=(1,1,\dots,1)^\T$.

\medskip 
\noindent \textbf{LCP2.} \citep[Example 7.1]{Kanzow96} $M$ is an upper 
triangular matrix 
with $M_{ii}=1$ for 
all $i\in [n]$, $M_{ij}=2$ for all $i<j$, and $b=(1,1,\dots,1)^\T$.
BPA is excluded for this case as it is applicable only when $M+M^\T$ is positive definite \citep[Theorem 
12.1.2]{FP03}.

\medskip 
\noindent \textbf{LCP3.} \citep[Example 7.3]{Kanzow96}
The entries of $b$ are independently
sampled from uniform random with range $(-500,500)$. $M$ is a $P$-matrix given by $M=A_1^\T A_1 + 
A_2+{\rm 
diag}(\eta)$, where $A_1, A_2\in \Re^{n\times n}$ are matrices with entries 
independently sampled from uniform random in $(-5,5)$, $A_2$ is skew-symmetric, and
each entry of $ \eta \in 
\Re^n$ is independently taken
from uniform random of $(0,0.3)$.

We set $\lambda =1$ for MAP according to \cref{thm:global_Pmatrix}.
$\sigma$ and $N$ follow the setting in the preceding section. Matlab's backslash operator is used to handle the linear 
system described in \cref{remark:linearsystem}, with the cost of
$O(n^3)$ for our problem, which is of the same order as the overhead
of computing $Q$ and $L_Q$.
On the other hand, the cost of one iteration of \cref{eq:pdmc} is
$O(n^2)$.
We will see in the experimental results that although component
identification in this case is slightly more expensive than its
counterpart in the SAFP experiment,
it still takes only a small portion of the overall running time of the
algorithms.
We set $n=5000$ in all of the experiments, 
and divide both $M$ and 
$b$ by the same scalar $\|M\|_1/\sqrt{n}$. This normalization is due to the geometric observation 
that for $n=1$, projection algorithms tend to converge faster to a solution 
when the slope is in a moderate range. 
%
Instead of \cref{eq:stop_saf}, we use the following standard measure of 
residual 
in LCP \citep[Proposition 1.5.8]{FP03} to
facilitate fair comparisons with BPA and EGA.
\begin{equation}
	\text{Residual} \coloneqq \| \min (x^k, Mx^k-b) \|. 
	\label{eq:residual}
	\end{equation}
We report the running time and iterations required for reducing
\cref{eq:residual} below $10^{-6}$.
For the feasibility reformulation of the LCP (see 
\cref{sec:lcp}), the first $n$ coordinates of $w^k$ correspond to 
$x^k$.

We see from \cref{fig:lcp_time} and \cref{tab:lcp} that 
indeed, the proposed acceleration schemes significantly reduce the required 
time and number of iterations to solve the generated LCPs.\cpsolved{Looks
like the 2nd row has some legends missing. You might also want to
increase the iter count for LCP2 in the figure. Blue solid line is
also missing or invisible for LCP1.}
The only exception is LCP1, on which the non-accelerated algorithms
terminate before reaching the specified $N$ for \cref{alg:PDMC_plus},
so component identification is not executed at all.
The residual of MAP+ presented in \cref{tab:lcp} tends to be much lower
than our stopping condition,
as the linear system solver is non-iterative and
cannot be terminated exactly at the point where the required residual
tolerance is met.
On the other hand, for AMAP+, component identification does
not change the residual much.
A closer examination revealed that in this case, the algorithm
sometimes terminates without triggering component identification.

Overall speaking, the proposed acceleration scheme in
\cref{alg:accelerated_FPA} using extrapolation is indeed very
effective in reducing the running time and iterations of fixed-point
maps, while the component identification part in \cref{alg:PDMC_plus}
is more useful when highly accurate solutions are required.

\begin{figure}[tb!]
	\centering
	\begin{tabular}{@{}cccc@{}}
		\includegraphics[width=.22\linewidth]{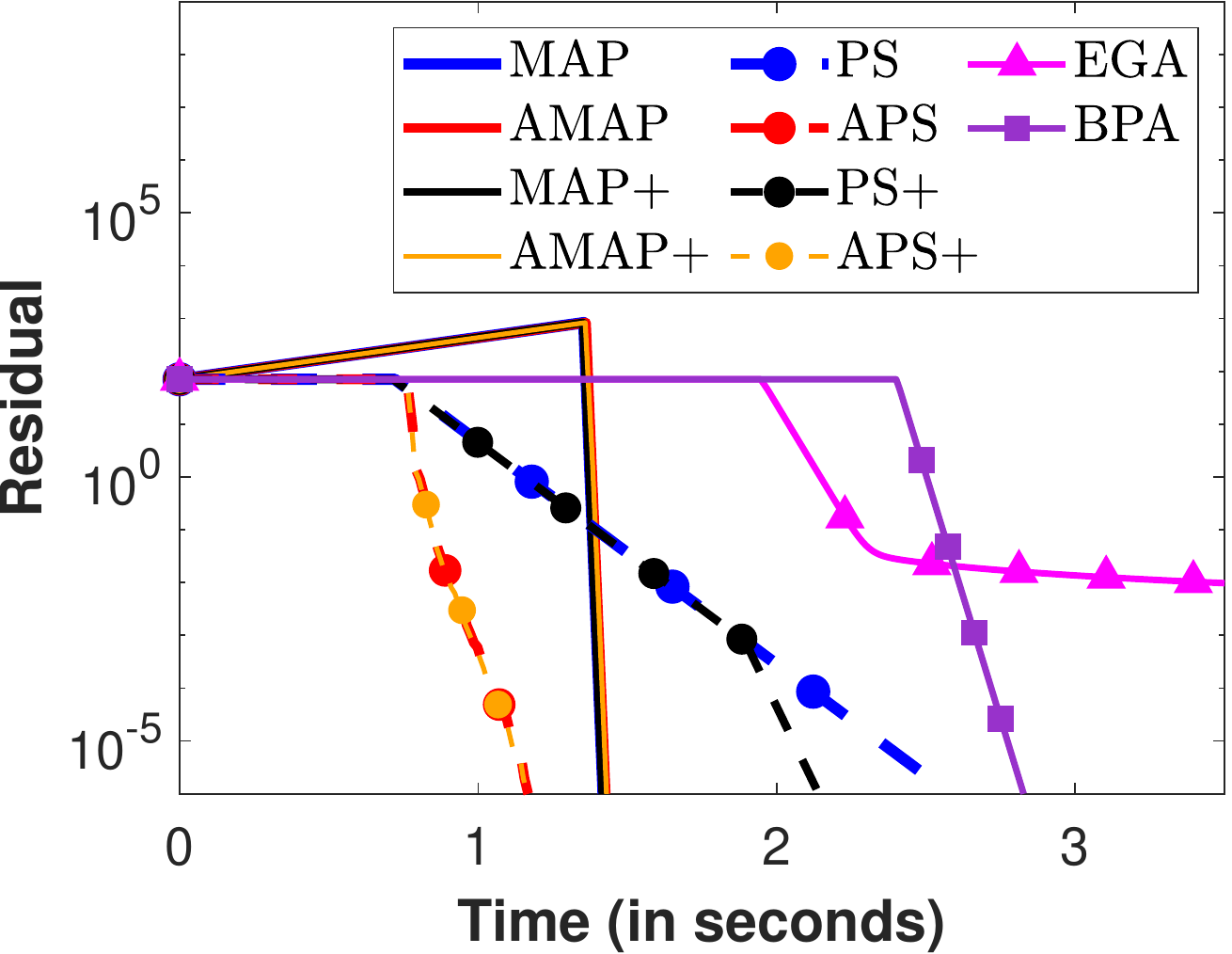}&
		\includegraphics[width=.22\linewidth]{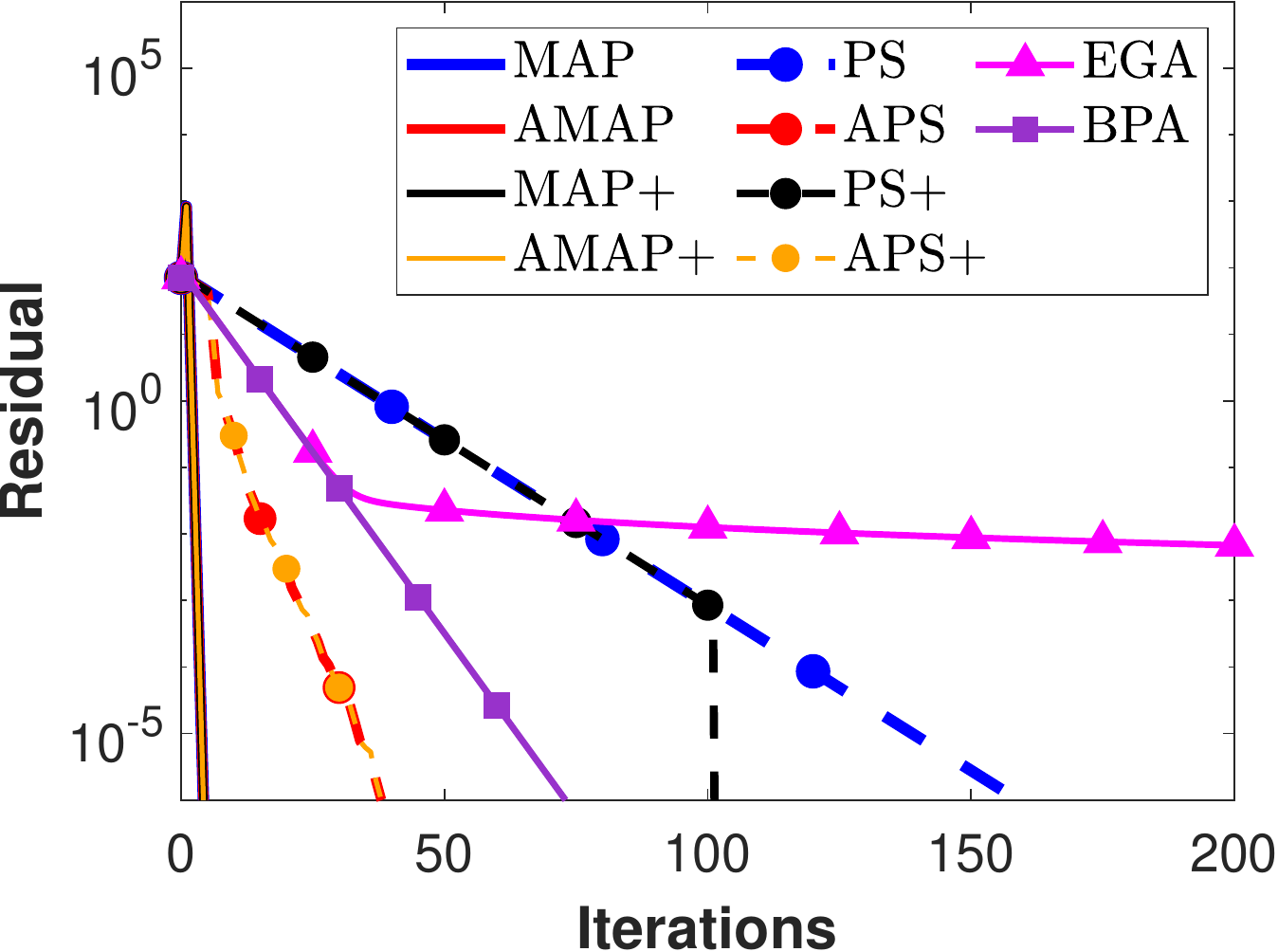}&
		\includegraphics[width=.22\linewidth]{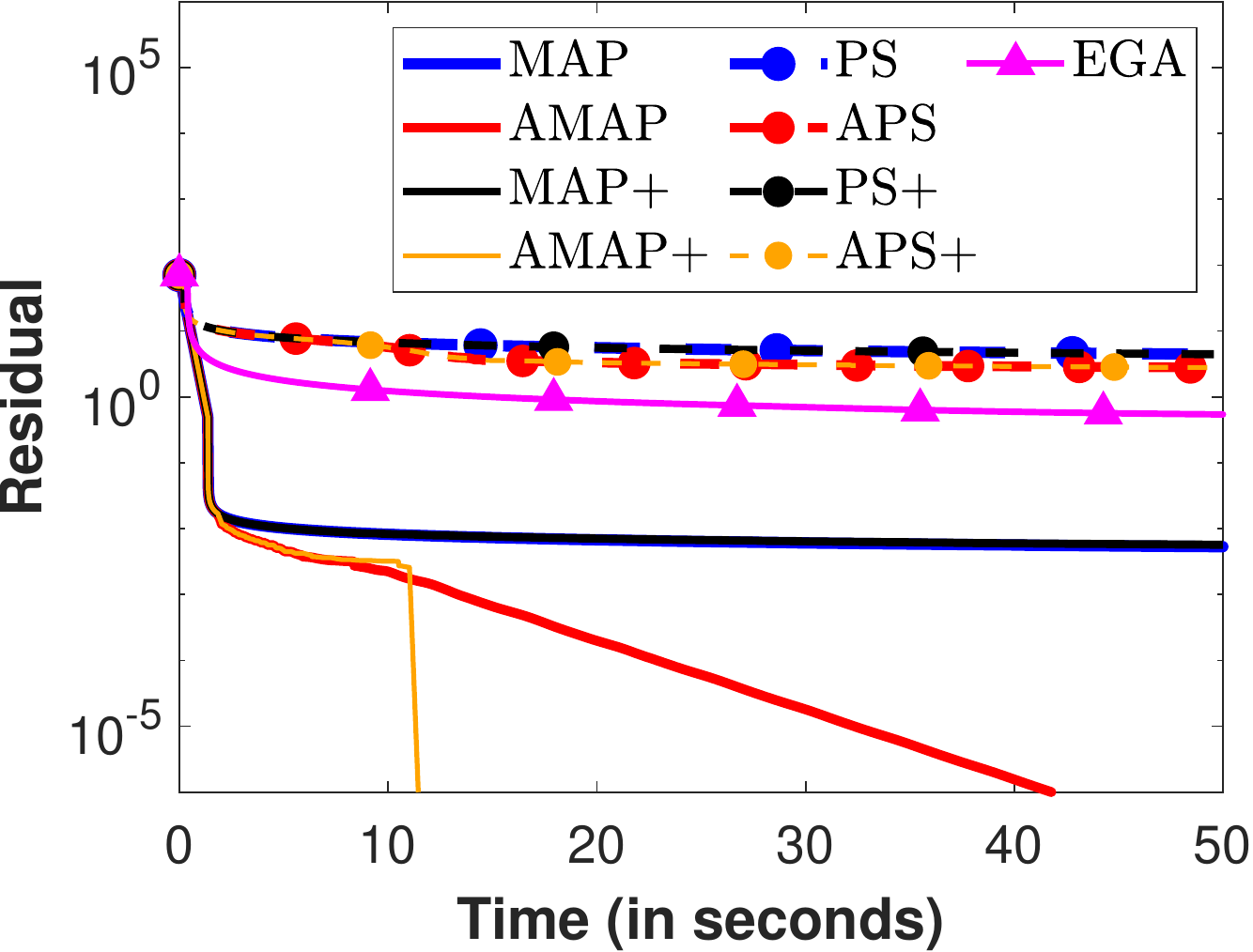} &
		\includegraphics[width=.22\linewidth]{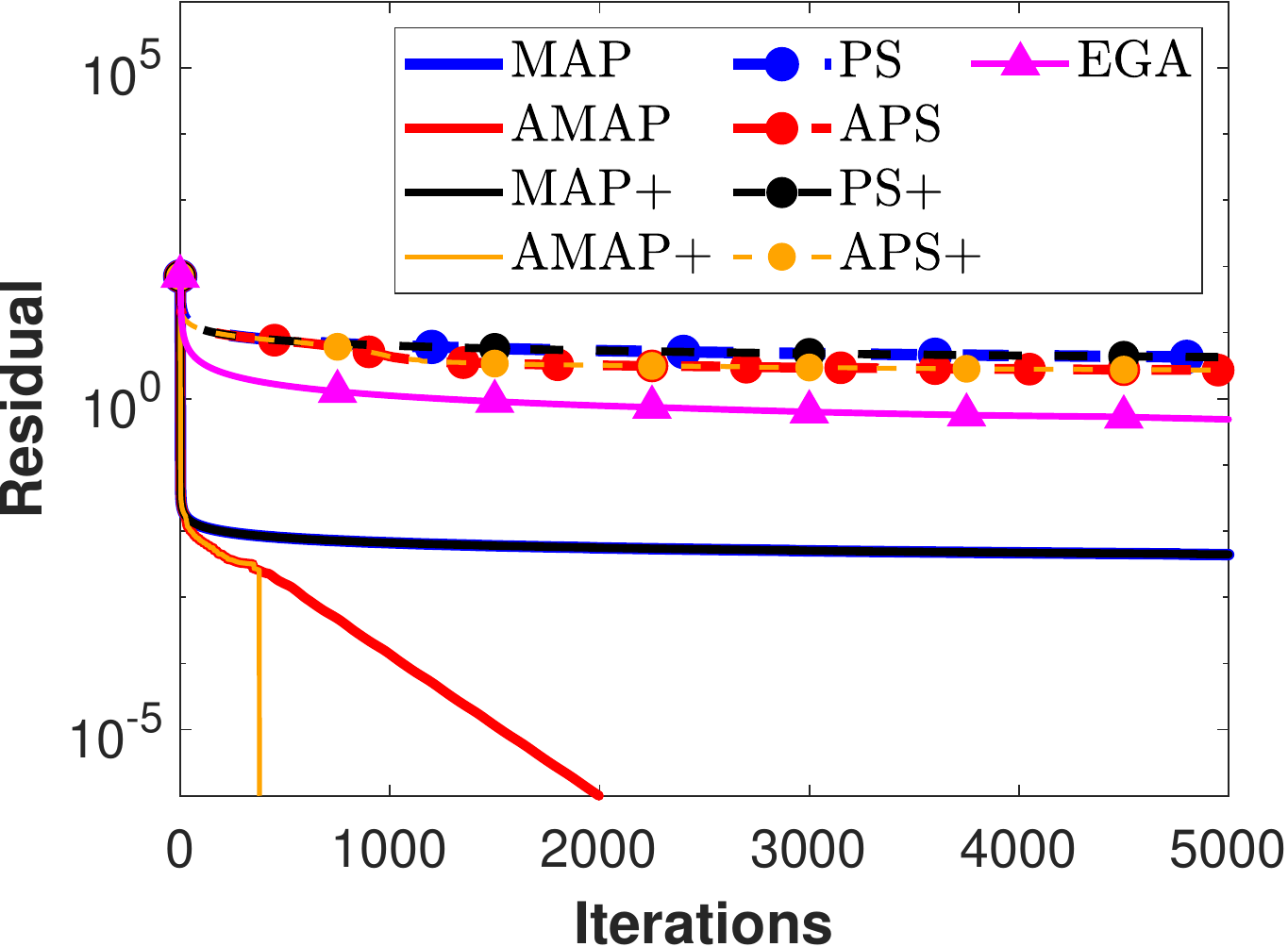}\\
		\multicolumn{2}{c}{LCP1} & \multicolumn{2}{c}{LCP2}
	\end{tabular}
	\caption{Comparisons of running time and iteration number of algorithms for 
		solving LCP1 and LCP2. For LCP1, the algorithms MAP, AMAP and APS overlap with MAP+, AMAP+ and APS+. Comparison with BPA on LCP2 is omitted because BPA is not
	applicable.}
	\label{fig:lcp_time}
\end{figure}

\begin{table}[tbh!]
	\centering 
	\caption{Performance of algorithms on ten independent trials of
		LCP3 for reducing \cref{eq:residual} to below $10^{-6}$. BPA
		and algorithms corresponding to $Q=\Id$ are omitted as 
		all of them failed to make the residual below $10^{-6}$ in 10000 iterations.
		Ave. CI Iters. refers to the average number of times Step 2.2 in
		\cref{alg:PDMC_plus}) is executed, while Ave. CI Time is the average amount
		of time required to finish \emph{one} CI iteration.}
		\begin{tabular}{lrrrrcr}
\toprule 
\multirow{2}*{Method} & \multicolumn{1}{c}{Ave.} & \multicolumn{1}{c}{Time} & \multicolumn{1}{c}{Ave.}  & \multicolumn{1}{c}{Ave.}  & \multirow{2}*{Residual}  \\ 
 & \multicolumn{1}{c}{Iters} & \multicolumn{1}{c}{(seconds)} & \multicolumn{1}{c}{CI Iters} & \multicolumn{1}{c}{CI Time} &   \\ 
\midrule 
MAP & 979.0 & 20.9 $\pm$ 1.3 & NA & NA & 1.0e-06 $\pm$ 2.3e-09 \\ 
AMAP & 244.1 & \bftab{6.3 $\pm$ 0.4} & NA & NA & 8.8e-07 $\pm$ 1.2e-07 \\ 
MAP+ & 577.1 & 16.1 $\pm$ 2.1 & 2.8 & 1.2 & \bftab{2.2e-15 $\pm$ 1.0e-16} \\ 
AMAP+ & \bftab{238.0} & 7.1 $\pm$ 0.5 & 0.8 & 1.2 & 1.4e-07 $\pm$ 2.9e-07 \\ 
\midrule 
EGA & 914.4 & 11.8 $\pm$ 0.4 & NA & NA & 9.9e-07 $\pm$ 7.5e-09 \\ 
\bottomrule 
\end{tabular}

\label{tab:lcp}
\end{table}

\section{Conclusion}
\label{sec:conclusion}
In this work, we analyzed the global subsequential convergence of fixed point 
iterations of union upper semicontinuous operators, and prove global 
convergence under a local Lipschitz condition. We show that this class of 
fixed 
point algorithms in fact covers several iterative methods for solving 
optimization and feasibility problems alike, and therefore global 
convergence 
of these methods is a consequence of the derived theory for the general 
setting 
of fixed point problems. In particular, we establish global convergence 
of 
proximal algorithms for minimizing a class of nonconvex nonsmooth functions, 
specifically those that can be expressed as the sum of
a piecewise smooth mapping and a function that is the difference of a 
min-$\rho$-convex and a convex function.
Linear convergence is also proven under a mild calmness condition.
We also prove global convergence of 
traditional projection methods for solving feasibility problems involving 
union 
convex sets. Acceleration methods via extrapolation and component 
identification are proposed by utilizing the special structure of the 
defining 
operators of the algorithms.
Numerical evidence illustrated 
that our
proposed acceleration schemes provide significant improvement over 
the 
non-accelerated ones in terms of both the running time and the number of 
iterations 
required to solve the problems. 
Another interesting future work is to obtain an iteration bound for
the component identification result,
and then to further develop global iteration complexities of the
discussed algorithms on top of the identification bound.

\ifdefined\submit
\section*{Declarations}
 CP's research was supported in part by NSTC of R.O.C. grant 109-2222-E001-003. 
\section*{Compliance with Ethical Standards}
The authors declare that they have no conflict of interest. 
\else 
\fi 
\bibliography{bibfile}
\appendix

\section{Proof of \cref{theorem:fdecreases_Pmatrix}}\label{app:lyapunov_pmatrix}
We will first develop necessary tools for separately considering different cases of $w$
and $w^+\in T_{\rm PS}^{\lambda}(w)$.
The following lemma will be our key tool to proving the desired result.
\begin{lemma}\label{lemma:fdecreases_notfixedpoint}
	Consider the setting of \cref{thm:global_lcp_nondegenerate}.
	Let $w\in S_2 \setminus \Fix (T_{\rm PS}^{\lambda})$ and $w^+ \in  
	T_{\rm PS}^{\lambda}(w)$ where $\lambda \in (0,1/L_Q]$. If $w\in 
	R_{\iota}$, 
	$\hat{w} \coloneqq P_{R_{\iota}}(w-\lambda \nabla f_Q(w))\neq 
	w$, 
	then $f_Q(w^+) < f_Q(w)$. 
\end{lemma}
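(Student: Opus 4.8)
The plan is to prove this by comparing everything against the single quadratic model of $f_Q$ centered at $w$, and to notice that the $P$-matrix hypothesis is \emph{not} needed here: it will only enter in the complementary cases (handled separately) where the projected gradient step stays at $w$ inside $R_\iota$ but the projection onto $S_2$ jumps to another convex piece. So the whole content of this lemma is a one-line comparison once the right objects are set up.

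First I would record the descent-lemma upper bound for $f_Q$. Since $\nabla f_Q$ is $L_Q$-Lipschitz by \eqref{eq:grad_f_Q}--\eqref{eq:LQ} and $\lambda\in(0,1/L_Q]$, the same computation as in \eqref{eq:f<=Qf} (specialized to the convex function $f_Q$) gives
\[
f_Q(z)\ \le\ Q^{\lambda}_{f_Q}(z,w)\ \coloneqq\ f_Q(w)+\lla\nabla f_Q(w),z-w\rla+\tfrac{1}{2\lambda}\norm{z-w}^2,\qquad\forall z\in\E .
\]
Next I would rewrite the projection step as minimization of this model: completing the square,
\[
\norm{z-\bigl(w-\lambda\nabla f_Q(w)\bigr)}^2=\tfrac{2}{1}\Bigl(\lambda\, Q^{\lambda}_{f_Q}(z,w)-\lambda f_Q(w)+\tfrac{\lambda^2}{2}\norm{\nabla f_Q(w)}^2\Bigr),
\]
so, dropping the additive constant and the positive scaling, $P_{S_2}\bigl(w-\lambda\nabla f_Q(w)\bigr)=\argmin_{z\in S_2}Q^{\lambda}_{f_Q}(z,w)$; hence $w^+$ attains $\min_{z\in S_2}Q^{\lambda}_{f_Q}(z,w)$. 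By the same reasoning $\hat w=P_{R_\iota}\bigl(w-\lambda\nabla f_Q(w)\bigr)$ minimizes $z\mapsto Q^{\lambda}_{f_Q}(z,w)$ over $R_\iota$, and since this function is strongly convex (its Hessian in $z$ is $\tfrac{1}{\lambda}\Id\succ 0$) it is the \emph{unique} minimizer over the closed convex set $R_\iota$.

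Now I would assemble the strict decrease. Because $w\in R_\iota$ (hypothesis) and $\hat w\ne w$ (hypothesis), uniqueness of the minimizer over $R_\iota$ forces $Q^{\lambda}_{f_Q}(\hat w,w)<Q^{\lambda}_{f_Q}(w,w)=f_Q(w)$. Since $R_\iota\subseteq S_2$ by \eqref{eq:S2unionconvexlcp} and $w^+$ minimizes $Q^{\lambda}_{f_Q}(\cdot,w)$ over $S_2$, we have $Q^{\lambda}_{f_Q}(w^+,w)\le Q^{\lambda}_{f_Q}(\hat w,w)$. Chaining this with the descent bound,
\[
f_Q(w^+)\ \le\ Q^{\lambda}_{f_Q}(w^+,w)\ \le\ Q^{\lambda}_{f_Q}(\hat w,w)\ <\ f_Q(w),
\]
which is exactly the claim. (Along the way this also shows $\hat w\ne w\Rightarrow w\notin\Fix(T_{\rm PS}^{\lambda})$, so the stated hypothesis $w\in S_2\setminus\Fix(T_{\rm PS}^{\lambda})$ is consistent with, indeed implied by, $\hat w\ne w$.)

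I do not expect a genuine obstacle in this particular lemma; the only point needing care is that the \emph{strictness} comes purely from $\hat w\ne w$ plus strong convexity/uniqueness of the projection onto the convex piece $R_\iota$, not from any property of the map $T_{\rm PS}^{\lambda}$. The real difficulty in the full proof of \cref{theorem:fdecreases_Pmatrix} is the complementary regime $\hat w=w$ with $w\notin\Fix(T_{\rm PS}^{\lambda})$: there the above argument degenerates ($Q^{\lambda}_{f_Q}(\hat w,w)=f_Q(w)$), and one must instead compare $f_Q(w)$ with $f_Q$ evaluated at a point of a different convex piece $R_{\iota'}$ to which the $S_2$-projection jumps, which is where the $P$-matrix structure (via \cref{lemma:Pmatrix}) is essential.
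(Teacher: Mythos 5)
Your proposal is correct and follows essentially the same route as the paper: both reduce the projection steps to minimization of the quadratic model $Q_{f_Q}^{\lambda}(\cdot,w)$ via \cref{eq:prox=argmin}, compare $w^+$ against $\hat w$ using $R_\iota\subseteq S_2$, and close with the descent bound \cref{eq:f<=Qf}. The only cosmetic difference is how strictness is obtained — you invoke uniqueness of the minimizer of the strongly convex model over $R_\iota$, while the paper uses the projection variational inequality to get the quantitative gap $Q_{f_Q}^{\lambda}(\hat w,w)\le f_Q(w)-\tfrac{1}{2\lambda}\norm{\hat w-w}^2$ — and your observation that the $P$-matrix hypothesis is not used in this lemma matches the paper's structure.
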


\begin{proof}
From \cref{eq:prox=argmin}, we have
	\begin{equation}
	T_{\rm PS}^{\lambda} (w) = P_S(w-\lambda \nabla f_Q(w)) = \argmin_{z\in 
	S_2}\, Q_{f_Q}^{\lambda}(z,w).
	\label{eq:T_PS=argmin}
	\end{equation}
	By the convexity of $R_{\iota}$ and the definition of $\hat{w}$, we have $\lla 
	w-\lambda \nabla f_Q(w)- \hat{w} , w - \hat{w}\rla \leq 0. $ Therefore,
	\begin{eqnarray*}
		\lambda  \lla \nabla f_Q(w) , \hat{w}-w \rla =	\lla \lambda 
		\nabla 
		f_Q(w) - (w-\hat{w}), \hat{w}-w \rla + 	\lla w-\hat{w} , 
		\hat{w}-w \rla \leq - \|\hat{w}-w\|^2. 
		\label{eq:sameside_inequality} 
	\end{eqnarray*}
	Thus, from the definition of $Q_{f_Q}^{\lambda}$ in 
	\cref{eq:f<=Qf}, 
	\begin{equation}
		Q_{f_Q}^{\lambda} (\hat{w},w) \leq f_Q(w) - 
		\frac{1}{\lambda}\|\hat{w}-w\|^2 + \frac{1}{2\lambda}\|\hat{w}-w\|^2 = 
		f_Q(w) - \frac{1}{2\lambda}\|\hat{w}-w\|^2 < 
		f_Q(w),
		\label{eq:bounding1}
	\end{equation}
	where the last inequality holds because $w\neq \hat{w}$.
	Further, we have from \cref{eq:f<=Qf} and \cref{eq:T_PS=argmin} that
	$f_Q(w^+) \leq 
	Q_{f_Q}^{\lambda}(w^+,w) \leq Q_{f_Q}^{\lambda}(\hat{w},w)$, 
	combining which with \cref{eq:bounding1} then
	gives the desired result.
	\ifdefined\submit
	\qed
	\fi
\end{proof}

In particular, the above lemma shows that if $w\notin \Fix (T_{\rm 
PS}^{\lambda})$ and $w^+ \in T_{\rm PS}^{\lambda}(w)$ belong to the same convex 
set 
$R_{\iota}$, then $f_Q(w^+) < f_Q(w)$. In the other case, we will show that 
there exists some $\iota\in \I$ such 
that $w\in 
R_{\iota}$ and the vector $\hat{w}= P_{R_{\iota}} (w-\lambda \nabla f_Q(w))$ is 
distinct from $w$. First, we consider the instance when $w$ is a nondegenerate 
point of $S_2$. 

	
	\begin{definition}[Nondegenerate point]
		Let $\{R_{\iota}\}_{\iota\in \I}$ be a finite collection of closed convex sets, $S = 
		\bigcup_{j\in J}R_j$, and $w\in S$. We say that $w$ is a 
		\emph{nondegenerate} point of $S$ if there exists a unique $\iota\in \I$ such 
		that $w\in R_{\iota}$. Otherwise, it is called a \emph{degenerate} point of $S$. 
		\label{defn:nondegenerate}
	\end{definition}\cpsolved{Looks like this part is not needed? For the
		definition of nondegenerate points, we only use it in the appendix
		now, like twice, so might not be necessary to give a name (or at least
		we move the name to the appendix). We can just
		say only one $R_j$ contains $w$ in those two places maybe. What do you
		think?}


\begin{proposition}\label{prop:fdecreases_nondegenerate}
	Consider the setting of \cref{thm:global_lcp_nondegenerate} (c).
	Let $w\in S_2 \setminus \Fix (T_{\rm PS}^{\lambda})$ and $w^+ \in T_{\rm 
	PS}^{\lambda} (w)$ where $\lambda \in (0,1/L_Q]$. If $M$ is a
	nondegenerate matrix and $w$ is a nondegenerate point, then $f_Q(w^+) < f_Q(w) .$
\end{proposition}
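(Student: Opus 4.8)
The plan is to reduce everything to \cref{lemma:fdecreases_notfixedpoint}. That lemma says: if $w\in R_\iota$ and $\hat w\coloneqq P_{R_\iota}(w-\lambda\nabla f_Q(w))\neq w$, then $f_Q(w^+)<f_Q(w)$. Since $w$ is a nondegenerate point, by \cref{defn:nondegenerate} there is a \emph{unique} $\iota\in\I$ with $w\in R_\iota$, so it suffices to prove that for this particular $\iota$ we have $\hat w\neq w$; the conclusion $f_Q(w^+)<f_Q(w)$ then follows directly from \cref{lemma:fdecreases_notfixedpoint}.

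I would argue by contradiction and assume $\hat w=w$. Write $v\coloneqq w-\lambda\nabla f_Q(w)$ and use the closed-form projection \eqref{eq:PRtau} onto $R_\iota$, together with the description of $w\in R_\iota$ coming from \cref{eq:S2unionconvexlcp}. Because $w$ is nondegenerate, the ``active'' coordinate of $w$ is strictly positive for every index in $\iota$ (for $j$ with $j\in\iota\cap[n]$ this is $w_j>0$, and for $j$ with $n+j\in\iota$ this is $w_{n+j}>0$). Comparing coordinatewise with $\hat w=P_{R_\iota}(v)=w$ and using that the truncation $(\cdot)_+$ is inactive on strictly positive values, I get $v_k=w_k$ for all $k\in\iota$, i.e. $(\nabla f_Q(w))_k=0$ for all $k\in\iota$. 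In vector form, recalling \cref{eq:grad_f_Q}, this is exactly $A_{:,\iota}^{\T}Q(Aw-b)=0$.

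To finish, I would invoke nonsingularity twice. Since $M$ is nondegenerate, the square $n\times n$ matrix $A_{:,\iota}$ is invertible (this is the fact from \cite[Lemma~2.10]{ACT21} already used in the proof of \cref{lemma:SRIP_lcp}), and $Q\in\{(AA^{\T})^{-1},\Id\}$ is invertible as well; hence $A_{:,\iota}^{\T}Q$ is invertible and $Aw-b=0$, that is, $w\in S_1$. But then $\nabla f_Q(w)=A^{\T}Q(Aw-b)=0$, so $v=w-\lambda\nabla f_Q(w)=w\in S_2$, whence $w\in P_{S_2}(v)=T_{\mathrm{PS}}^{\lambda}(w)$, contradicting the hypothesis $w\notin\Fix(T_{\mathrm{PS}}^{\lambda})$. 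Therefore $\hat w\neq w$, and \cref{lemma:fdecreases_notfixedpoint} yields $f_Q(w^+)<f_Q(w)$.

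The only step needing genuine care is the extraction of $(\nabla f_Q(w))_\iota=0$ from $\hat w=w$: for a \emph{degenerate} $w$ the active coordinate could be $0$, in which case $(v_k)_+=0=w_k$ is consistent with $(\nabla f_Q(w))_k$ being any nonpositive number, so the argument would collapse. This is precisely where nondegeneracy of $w$ is essential, and I would make sure to apply it to \emph{every} index of $\iota$, not merely to some. Everything else (the projection formulas, the $\nabla f_Q$ expression, the invertibility facts) is routine and already recorded in the excerpt.
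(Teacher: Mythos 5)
Your proof is correct, and its skeleton matches the paper's: reduce to \cref{lemma:fdecreases_notfixedpoint}, assume $\hat w=w$ for contradiction, and show this forces $w\in S_1\cap S_2\subset\Fix(T_{\rm PS}^{\lambda})$. Where you genuinely diverge is in how the nondegeneracy of $M$ is deployed. The paper records the \emph{full} coordinate structure of $\bar w-w$ (it is supported, pair by pair, on at most one of $\{j,n+j\}$, so $(\bar w_j-w_j)(\bar w_{n+j}-w_{n+j})=0$ for all $j$), notes $\bar w-w\in\Ran(A^{\T})=\Ker(A)^{\perp}$, and then invokes \cite[Proposition~2.11]{ACT21} to conclude $\bar w=w$, whence $\nabla f_Q(w)=0$ and $Aw=b$ by full row rank. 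You instead use only the \emph{active} coordinates: nondegeneracy of the point gives $(\nabla f_Q(w))_\iota=0$, i.e.\ $A_{:,\iota}^{\T}Q(Aw-b)=0$, and then the invertibility of the square matrix $A_{:,\iota}$ (the same \cite[Lemma~2.10]{ACT21} fact already used in \cref{lemma:SRIP_lcp}) together with the invertibility of $Q$ yields $Aw=b$ directly, after which $\nabla f_Q(w)=0$ follows rather than precedes. Your route is slightly more self-contained, since it bypasses Proposition~2.11 of \cite{ACT21} and discards the information about the inactive coordinates entirely; the paper's route has the advantage that the complementarity identity for $\bar w-w$ it establishes is exactly the structure reused (with signs) in the degenerate/$P$-matrix case of \cref{theorem:fdecreases_Pmatrix}, so the two cases share their machinery. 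You are also right to flag that the only delicate step is extracting $v_k=w_k$ for \emph{every} $k\in\iota$ from $(v_k)_+=w_k>0$ -- that is precisely where nondegeneracy of the point enters in both arguments.
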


\begin{proof}
	Let $\iota\in \I$ such that $w\in R_{\iota}$, and let $\hat{w}$ be as 
	in \cref{lemma:fdecreases_notfixedpoint}. To prove the claim, we only need 
	to show that $w \neq \hat{w}$, and then the result follows from
	\cref{lemma:fdecreases_notfixedpoint}. Suppose to the contrary that $w=\hat{w}$, 
	that is, $w = P_{R_{\iota}}(\bar{w})$ for $\bar{w} \coloneqq w-\lambda 
	\nabla f_Q(w)$. Since $w = P_{R_{\iota}}(\bar{w})$ and $(w_j,w_{n+j})\neq 
	(0,0)$ for all $j\in [n]$ by nondegeneracy of $w$, we have from \cref{eq:PRtau} that $w_j = 
	\bar{w}_j >0$ 
	and $w_{n+j} =0 $ if $j\in \iota$, and $w_{n+j} = \bar{w}_{n+j} 
	>0$ and 
	$w_j = 
	0$ otherwise. Thus, 
	\[(\bar{w}_j, \bar{w}_{n+j}) - (w_j, w_{n+j}) = \begin{cases}
		(0, \bar{w}_{n+j}) & \text{if}~j\in \iota,\\ 
		(\bar{w}_j , 0) & \text{if}~j\notin \iota,
	\end{cases}  \quad \forall j\in [n]. \] 
	Hence, 
		\begin{equation}
		(\bar{w}_j - w_j) (\bar{w}_{n+j} - w_{n+j}) = 0 ,\quad 	\forall 
		j\in [n].
		\label{eq:wbar-w_in_C2hat}
		\end{equation}
	Since $\bar{w}  - w =  -\lambda \nabla f_Q(w)$, we have 
	$\bar{w}-w \in \Ran (A^\T) =\Ker (A)^{\perp}$
	from \cref{eq:grad_f_Q}.
	By this together with 
	\cref{eq:wbar-w_in_C2hat} and the nondegeneracy of $M$, we
	have from \cite[Proposition 2.11]{ACT21} that $\bar{w}-w = 0$. 
	Consequently, we have $\nabla f_Q(w) = 0$, and since 
	$A$ is of full row rank, it follows from \cref{eq:grad_f_Q} that 
	$Aw-b=0$. That is, $w\in S_1$, and 
	in turn, we get $w\in S_1\cap S_2$. This is a contradiction since $w\notin 
	\Fix (T_{\rm PS}^{\lambda})$. Hence, $w\neq \hat{w}$, as desired.
	\ifdefined\submit
	\qed
	\fi 
\end{proof}

If $w$ is degenerate,
\cref{eq:wbar-w_in_C2hat} does not hold anymore, which prohibits the use of 
Proposition 2.11 of \cite{ACT21}. For such a case, we need the following 
lemmas. 

\begin{lemma}\cite[Theorem~3.3.4]{CPS92}\label{lemma:Pmatrix}
	$M\in \Re^{n\times n}$ is a $P$-matrix if and only if whenever 
	$x_j(Mx)_j\leq 0$ for all $j\in [n]$, we have $x=0$.
\end{lemma}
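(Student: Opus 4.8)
My plan is to recognize the statement as the classical Fiedler--Pták sign-reversal characterization of $P$-matrices and to prove the two implications separately, each in contrapositive form. Recall that $M$ fails to be a $P$-matrix exactly when $\det M_{\alpha\alpha}\le 0$ for some nonempty $\alpha\subset[n]$.

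For the ``if'' direction (sign-reversal property $\Rightarrow$ $M$ is a $P$-matrix), I would assume $M$ is \emph{not} a $P$-matrix and build a nonzero $x$ with $x_j(Mx)_j\le 0$ for all $j$. Choose a nonempty $\alpha\subset[n]$ of \emph{minimum} cardinality with $\det M_{\alpha\alpha}\le 0$. If $\det M_{\alpha\alpha}=0$, take $0\neq y\in\Ker(M_{\alpha\alpha})$ and set $x_{\alpha}=y$, $x_{\alpha^c}=0$; then $(Mx)_j=(M_{\alpha\alpha}y)_j=0$ for every $j\in\alpha$, so $x_j(Mx)_j=0$ for all $j$. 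If $\det M_{\alpha\alpha}<0$, then minimality forces every proper principal submatrix of $M_{\alpha\alpha}$ to have positive determinant, so each diagonal entry $(M_{\alpha\alpha}^{-1})_{ii}$ equals $\det(M_{\alpha\alpha}$ with row and column $i$ deleted$)$ divided by $\det M_{\alpha\alpha}$, i.e.\ a positive number over a negative one, hence negative. Fixing $i\in\alpha$, set $x_{\alpha}=M_{\alpha\alpha}^{-1}e_i$ and $x_{\alpha^c}=0$; then $(Mx)_j=(M_{\alpha\alpha}x_\alpha)_j=(e_i)_j$ for $j\in\alpha$, so $x_j(Mx)_j=0$ for $j\neq i$ and $x_i(Mx)_i=(M_{\alpha\alpha}^{-1})_{ii}<0$, while $x\neq 0$.

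For the ``only if'' direction ($M$ is a $P$-matrix $\Rightarrow$ the sign-reversal property holds), I would again argue by contraposition: suppose $x\neq 0$ satisfies $x_j(Mx)_j\le 0$ for all $j$, let $\alpha=\{j:x_j\neq 0\}\neq\emptyset$, and note $(Mx)_j=(M_{\alpha\alpha}x_{\alpha})_j$ for $j\in\alpha$. Scaling by the diagonal sign matrix $D=\diag(\sgn x_j)_{j\in\alpha}$ replaces the data by $N:=D M_{\alpha\alpha}D$ and $u:=D x_{\alpha}>0$ with $Nu\le 0$; since $\det N_{\beta\beta}=(\det D_{\beta})^2\det M_{\beta\beta}=\det M_{\beta\beta}$ for every $\beta\subseteq\alpha$, it suffices to show $N$ is not a $P$-matrix. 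Put $q:=-Nu\ge 0$. Then both $z=0$ and $z=u$ solve the LCP ``$z\ge 0$, $Nz+q\ge 0$, $z^{\T}(Nz+q)=0$'' (for $z=u$, $Nu+q=0$), and $u\neq 0$; this contradicts the uniqueness of the LCP solution for a $P$-matrix recalled in \cref{sec:lcp_nondegenerate} (\cite[Theorem~3.3.7]{CPS92}). Hence $N$, and with it $M$, is not a $P$-matrix.

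The only genuinely delicate point is this last implication. The tempting self-contained route---restrict to the support, sign-normalize to a vector $u>0$ with $Nu\le 0$, then shrink the dimension via a Schur complement at a positive diagonal pivot---breaks down because the off-diagonal entries of the pivoted matrix are not sign-controlled, so the reduced vector need not remain sign-reversing. Routing through the $P$-matrix LCP-uniqueness fact bypasses this obstacle cleanly; alternatively one can follow the original Fiedler--Pták inductive argument, which is where the real work lies. The ``if'' direction, by contrast, is purely a cofactor computation and should go through routinely.
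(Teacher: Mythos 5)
The paper does not prove this lemma at all: it is imported verbatim as \cite[Theorem~3.3.4]{CPS92}, so there is no in-paper argument to compare against. Judged on its own merits, your ``if'' direction is correct and self-contained: choosing $\alpha$ of minimal cardinality with $\det M_{\alpha\alpha}\le 0$, the kernel construction in the singular case and the cofactor computation $(M_{\alpha\alpha}^{-1})_{ii}=\det M_{(\alpha\setminus\{i\})(\alpha\setminus\{i\})}/\det M_{\alpha\alpha}<0$ in the case $\det M_{\alpha\alpha}<0$ both produce a nonzero sign-reversing vector, including the edge case $\abs{\alpha}=1$ where the deleted submatrix is empty with determinant $1$. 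The support-restriction and sign-normalization in the ``only if'' direction are also carried out correctly: $u=Dx_\alpha>0$, $Nu\le 0$, and $\det N_{\beta\beta}=\det M_{\beta\beta}$ all check out.

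The one genuine problem is a circularity risk in how you close the ``only if'' direction. You invoke \cite[Theorem~3.3.7]{CPS92} (uniqueness of LCP solutions for $P$-matrices) to rule out the two solutions $z=0$ and $z=u$ of the LCP with data $(N,-Nu)$. But in the very reference being cited, the uniqueness half of Theorem~3.3.7 is \emph{derived from} Theorem~3.3.4: given two solutions $z^1,z^2$, one shows $(z^1-z^2)_j\bigl(M(z^1-z^2)\bigr)_j\le 0$ for all $j$ and then applies sign reversal to conclude $z^1=z^2$. So as written, your proof of the lemma rests on a theorem whose standard proof rests on the lemma. This is repairable rather than fatal: either cite an independent proof of the uniqueness statement (the original Samelson--Thrall--Wesler argument via the partition of $\Re^n$ into complementary cones predates and does not use Fiedler--Pt\'ak), or carry out the Fiedler--Pt\'ak induction you mention in your closing paragraph. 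You correctly identify that this is where the real work lies; the proposal just needs to commit to a non-circular route for that step rather than deferring to Theorem~3.3.7.
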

\begin{lemma}\label{lemma:fdecreases_degenerate}
	Consider the setting of \cref{thm:global_lcp_nondegenerate}.
	Let $w\in S_2 \setminus \Fix (T_{\rm PS}^{\lambda})$ and $w^+ \in T_{\rm 
	PS}^{\lambda} (w)$ for $\lambda \in (0,1/L_Q]$, and suppose that $w$ is 
	degenerate. Let $\iota \in \I$ be
	such that $w\in R_{\iota}$ and suppose that $w = \hat{w}$, where 
	$\hat{w}=P_{R_{\iota}}(w-\lambda \nabla f_Q(w))$. Denote
	$\bar{w}\coloneqq w - \lambda \nabla f_Q(w)$ and
	\begin{equation}\label{eq:Lambda(w)}
		\Gamma (w)\coloneqq \{ j\in [n] ~:~w_j = w_{n+j} = 0 
		~\text{and}~(\bar{w}_j,\bar{w}_{n+j}) \notin \Re^2_-\},
	\end{equation}
		where $\Re^2_- \coloneqq \left\{ (x_1,x_2): x_1,x_2 \leq 0
		\right\}$,
	then $\Gamma (w) \neq \emptyset$ implies $f_Q(w^+) < f_Q(w)$.
\end{lemma}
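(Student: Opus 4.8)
The plan is to reduce the claim to Lemma~\ref{lemma:fdecreases_notfixedpoint}, which already shows $f_Q(w^+)<f_Q(w)$ as soon as we can exhibit \emph{some} index $\iota'\in\I$ with $w\in R_{\iota'}$ and $P_{R_{\iota'}}(w-\lambda\nabla f_Q(w))\neq w$. Indeed, the conclusion of Lemma~\ref{lemma:fdecreases_notfixedpoint} is a statement about the given $w^+\in T_{\rm PS}^{\lambda}(w)$ and holds for any such $w^+$, so only the existence of a ``separating'' component set $R_{\iota'}$ is needed. Thus the entire task becomes: \emph{assuming $\Gamma(w)\neq\emptyset$ and $w=\hat w=P_{R_\iota}(\bar w)$ for the chosen $\iota$, produce $\iota'\in\I$ with $w\in R_{\iota'}$ and $P_{R_{\iota'}}(\bar w)\neq w$.} Note no $P$-matrix property is required here; only the combinatorial structure \eqref{eq:S2unionconvexlcp} of $\I$ and the explicit projection formula \eqref{eq:PRtau} are used.

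First I would fix $j\in\Gamma(w)$, so $w_j=w_{n+j}=0$ and $(\bar w_j,\bar w_{n+j})\notin\Re^2_-$, i.e.\ $\bar w_j>0$ or $\bar w_{n+j}>0$. Writing $\iota=\Lambda_1\cup\Lambda_2$ with $\Lambda_2=\{n+\ell:\ell\notin\Lambda_1\}$, I split into two cases according to whether $j\in\Lambda_1$. If $j\in\Lambda_1$, then the hypothesis $w=P_{R_\iota}(\bar w)$ together with \eqref{eq:PRtau} forces $0=w_j=(\bar w_j)_+$, hence $\bar w_j\le0$, and therefore $\bar w_{n+j}>0$ since $j\in\Gamma(w)$. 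Define $\iota'$ by removing $j$ from $\Lambda_1$ (equivalently, adding $n+j$ to $\Lambda_2$); this is a legitimate element of $\I$. Since $w_j=w_{n+j}=0$, the support of $w$ is unaffected, so $w\in R_{\iota'}$. Computing $P_{R_{\iota'}}(\bar w)$ via \eqref{eq:PRtau} at coordinate $j$ gives $(n+j)$-entry equal to $(\bar w_{n+j})_+=\bar w_{n+j}>0=w_{n+j}$, so $P_{R_{\iota'}}(\bar w)\neq w$. The case $j\notin\Lambda_1$ is symmetric: $w=P_{R_\iota}(\bar w)$ forces $\bar w_{n+j}\le0$, hence $\bar w_j>0$, and adding $j$ to $\Lambda_1$ yields $\iota'\in\I$ with $w\in R_{\iota'}$ whose projection of $\bar w$ has $j$-entry $(\bar w_j)_+=\bar w_j>0=w_j$. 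In both cases Lemma~\ref{lemma:fdecreases_notfixedpoint} applied with $\iota'$ then yields $f_Q(w^+)<f_Q(w)$.

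The only real care needed is the bookkeeping: verifying that the modified index set $\iota'$ genuinely lies in $\I$ (immediate, since $\I$ corresponds to all subsets $\Lambda_1\subset[n]$), that $w\in R_{\iota'}$ (which is where the degeneracy $w_j=w_{n+j}=0$ is essential), and that the projection formula \eqref{eq:PRtau} is applied at the right coordinate to witness $P_{R_{\iota'}}(\bar w)\neq w$. There is no analytic obstacle — the argument is a short case analysis once the correspondence between $\Gamma(w)$, the sign of the components of $\bar w$, and the freedom in choosing the branch at degenerate coordinates is made explicit.
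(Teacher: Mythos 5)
Your proposal is correct and follows essentially the same route as the paper: both arguments flip the branch choice at a degenerate coordinate in $\Gamma(w)$ to produce an $\iota'\in\I$ with $w\in R_{\iota'}$ but $P_{R_{\iota'}}(\bar w)\neq w$, and then invoke \cref{lemma:fdecreases_notfixedpoint}. The only cosmetic difference is that the paper flips all indices in $\Gamma(w)$ at once while you flip a single one, which changes nothing of substance.
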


\begin{proof}
	Define $\Gamma (w)_1  \coloneqq  \Gamma (w) \cap \iota $ 
	and $\Gamma (w)_2 \coloneqq \Gamma (w) \cap \iota^c$.
	Note that since $\Gamma (w) \neq \emptyset$, either $\Gamma(w)_1$ or 
	$\Gamma (w)_2$ is nonempty. Now, since $w = 
	P_{R_{i}}(\bar{w})$, we obtain from \cref{eq:PRtau} that 
	\begin{equation}\label{eq:wbar_location}
		\begin{cases}
			\bar{w}_j < 0 , ~\bar{w}_{n+j} >0& \text{if}~ j\in \Gamma(w)_1 \\
			\bar{w}_j > 0 , ~\bar{w}_{n+j} <0  & \text{if}~j \in \Gamma(w)_2
		\end{cases} .
	\end{equation}
	Let $\iota ' \in \I$ be given by $\iota' = \Lambda_1 \cup \Lambda_2$, where
	$\Lambda_1 = (\iota  \cap \Gamma (w)^c) \cup \Gamma(w)_2$ and $\Lambda_2 = 
	\{ n+j: 
	j \in [n], j\notin \Lambda_1\}$. 
	Namely, for all $j \in [n]$ with $(w_j, w_{n+j}) = (0,0)$,
	$\iota'$ picks the one in $\{j, n+j\}$ not included in
	$\iota$.
	Then $w\in R_{\iota'}$, and by setting $w' \coloneqq P_{R_{i'}} (w-\lambda 
	\nabla f_Q(w)) = 
	P_{R_{i'}}(\bar{w})$, we have from the definition of $\iota'$ 
	and~\cref{eq:wbar_location} that 
	\[ (w_j', w_{n+j}') = \begin{cases}
		(w_j, w_{n+j}) & \text{if}~j\notin \Gamma (w)\\
		(0,\bar{w}_{n+j}) & \text{if}~ j\in \Gamma (w)_1 \\ 
		(\bar{w}_j, 0) &   \text{if}~j \in \Gamma (w)_2
	\end{cases} .\]
	Since $\bar{w}_{n+j}\neq 0$ for $j\in \Gamma(w)_1$ and $\bar{w}_j \neq 0$ 
	for $j\in \Gamma(w)_2$, we see that $w\neq w'$. By 
	\cref{lemma:fdecreases_notfixedpoint}, $f_Q(w^+) < f_Q(w)$.  
	\ifdefined\submit
	\qed
	\fi     
\end{proof}


\begin{proof}[\cref{theorem:fdecreases_Pmatrix}]
	If $w$ is nondegenerate, the result follows from 
	\cref{prop:fdecreases_nondegenerate} since $P$-matrices are
	nondegenerate. Assume that $w$ is degenerate and let $\iota \in \I$ such 
	that $w\in R_{\iota}$. If 
	$w\neq \hat{w}$, where $\hat{w} \coloneqq  P_{R_{\iota}}(w-\lambda \nabla 
	f_Q(w))$, 
	the result immediately follows 
	from 
	\cref{lemma:fdecreases_notfixedpoint}.

	Suppose now that $w=\hat{w}$. We 
	claim that $\Gamma (w)$ given by~\cref{eq:Lambda(w)} is 
	nonempty. To this end, consider the following index sets:
		\begin{align}
			\label{eq:I1}
			I_1(w) &\coloneqq  \{ j\in [n] ~:~ w_j=w_{n+j} = 0  \},
			\\
			\nonumber
			I_2(w) &\coloneqq  \{ j\in [n] ~:~ w_j > 0~\text{and}~w_{n+j}=0  
		\},\\
			\nonumber
		I_3(w) & \coloneqq \{ j\in [n] ~:~ w_j =0~\text{and}~w_{n+j} > 
	0  \}.
\end{align}
	Since $w\in S_2$, $I_1 \cup I_2 \cup I_3 = [n]$. If $j\in I_2 \cup 
	I_3$, we have from the equation $w = P_{R_{i}}(\bar{w})$ 
	and \cref{eq:PRtau} that 
	\begin{equation}\label{eq:I2I3}
		\begin{cases}
			w_j = \bar{w}_j >0 & \text{if}~j\in I_2 \\
			w_{n+j} = \bar{w}_{n+j} >0 & \text{if}~j\in I_3 
		\end{cases} . 
	\end{equation}
	Meanwhile, as in the proof of \cref{prop:fdecreases_nondegenerate}, we have 
	$z\coloneqq \bar{w}-w \in \Ran (A^\T)$. From the formula of $A$, it is not 
	difficult to verify that $\Ran (A^\T) = \Ker ([\Id~M^{\T}])$. Thus, by letting 
	$z = (u,v) \in \Re^n \times \Re^n$, we have $u+M^\T v = 0$, that is,
		\[(M^\T v)_j = - u_j, \quad \forall j\in [n].\]
	By multiplying both sides by $v_j$, we then obtain from
	\cref{eq:I1,eq:I2I3} that
		\begin{equation}
			 v_j (M^\T v)_j = -u_j v_j = -(\bar{w}_j - w_j)(\bar{w}_{n+j} - 
			w_{n+j}) = \begin{cases}
				-\bar{w}_j \bar{w}_{n+j} & \text{if}~j\in I_1 \\ 
				0 & \text{if}~j\in I_2 \cup I_3
			\end{cases}.
		\label{eq:products}
		\end{equation}
	Now, if $\Gamma (w) = \emptyset$, then 
	$\bar{w}_j,\bar{w}_{n+j} \leq 0$ for all $j\in I_1$, and the above equation 
	implies that $ v_j (M^\T v)_j  \leq 0$ for all $j\in [n]$. By 
	\cref{lemma:Pmatrix}, $v=0$ 
	since $M$ is a $P$-matrix, which in turn gives $u=0$. That is, we have 
	$\bar{w}-w = 0$. 
	As shown in the proof of \cref{prop:fdecreases_nondegenerate}, this implies 
	that $w\in S_1\cap S_2$, which is a contradiction since $w\notin \Fix 
	(T_{\rm PS}^{\lambda})$. Hence, we must have $\Gamma (w) \neq \emptyset$ 
	and by \cref{lemma:fdecreases_degenerate}, we get $f_Q(w^+) < f_Q(w)$.
	\ifdefined\submit
	\qed
	\fi
\end{proof}

\section{Proof of \cref{thm:fixedpoints_Pmatrix}}\label{app:fixed_Pmatrix}
	First, we show that
	\begin{equation}
		\exists \bar{w}: 
	w\in P_{S_2}(\bar{w}) \quad \text{and} \quad \bar{w}-w \in 
	\Ran(A^\T) \quad \Longrightarrow \quad w=\bar{w}\in S_1\cap S_2.
	\label{eq:regularity}
	\end{equation}
	Indeed, let $z\coloneqq \bar{w}-w $ and denote $z=(u,v)\in \Re^n\times 
	\Re^n$. From the proof of \cref{theorem:fdecreases_Pmatrix}, we know 
	that \cref{eq:products} holds. 
	Since $w\in P_{S_2}(\bar{w})$, we have that $(\bar{w}_j,\bar{w}_{n+j}) \in 
	\Re^2_-$ for all $j\in I_1$. Hence, we obtain from the same arguments in 
	the proof of \cref{theorem:fdecreases_Pmatrix} that $w=\bar{w}\in S_1\cap 
	S_2$. We now consider the three cases separately:
	\begin{enumerate}[(i)]
		\item Suppose $w\in \Fix (\Tpdmc)$. Then from 
			\cref{eq:pdmc_affine_ucs}, it 
		can be verified that $w = -\nabla f_Q(w) + w'$, where $w'\in 
		P_{S_2}(w)$. By \cref{eq:grad_f_Q} and \cref{eq:regularity}, we get 
		the desired result.
		
		\item Let $z\in \Fix (\Tfb)$ and denote
		$\bar{w} \coloneqq z - \lambda \nabla f_Q(z)$ and $w \coloneqq ((1+\lambda)z -
		\bar{w})/\lambda$.
		From the formula of 
		$\Tfb$ in \cref{eq:fb_affine_ucs}, we can derive that 
		$w \in P_{S_2}(\bar{w})$.
		We then have
		$\bar{w}-w = (1+\lambda)(\bar{w}-z) / \lambda
		= - (1+\lambda)\nabla f_Q(z)$, and thus $\bar{w}-w\in \Ran(A^\T)$ by 
		\cref{eq:grad_f_Q}. By \cref{eq:regularity}, we have $w=\bar{w}\in 
		S_1\cap S_2$. From the formula of $w$, we obtain that $z=w$, so
		$z\in S_1\cap S_2$. 
		
		\item If $w\in \Fix (T_{\rm PS}^{\lambda})$, from 
		\cref{eq:ps_affine_ucs}, we have $w \in P_{S_2}(\bar{w})$, where 
		$\bar{w} = w - \lambda \nabla f_Q(w)$. Thus,
		 we obtain from \cref{eq:grad_f_Q} that $\bar{w}-w\in \Ran
		 (A^\T)$, so
		 $w=\bar{w}\in S_1\cap S_2$ by \cref{eq:regularity}.
		\ifdefined\submit
		\qed
		\else 
		\qedhere
		\fi
	\end{enumerate}
\end{document}